\newcommand{\nc}{\newcommand}
\numberwithin{equation}{section}
\newtheorem{theorem}{Theorem}[section]
\newtheorem{prop}[theorem]{Proposition}
\newtheorem{importnota}[theorem]{Important Notation}
\newtheorem{prblm}[theorem]{Problem}
\newtheorem{notation}[theorem]{Notation}
\newtheorem{caution}[theorem]{Caution}
\newtheorem{remark}[theorem]{Remark}
\newtheorem{lemma}[theorem]{Lemma}
\newtheorem{construction}[theorem]{Construction}
\newtheorem{corollary}[theorem]{Corollary}
\newtheorem{example}[theorem]{Example}
\newtheorem{conclusion}[theorem]{Conclusion}
\newtheorem{triviality}[theorem]{Triviality}
\newtheorem{proto}[theorem]{Prototype Quasifibration}
\newtheorem{cauex}[theorem]{Cautionary Example}
\newtheorem{propositiondef}[theorem]{Proposition-Definition}
\newtheorem{subth}{Nuisance}[theorem]
\newtheorem{ssubth}{ }[subth]
\newtheorem{conjecture}[theorem]{Conjecture}
\newtheorem{sidest}[theorem]{Side Story}
\newtheorem{miniexample}[theorem]{Example}
\theoremstyle{definition}
\newtheorem{defin}[theorem]{Definition}
\nc\tri[1]{\begin{triviality}}
\nc\side[1]{\begin{sidest}}
\nc\conj[1]{\begin{conjecture}}
\nc\prodef[1]{\begin{propositiondef}}
\nc\prt[1]{\begin{proto}}
\nc\lem[1]{\begin{lemma}}
\nc\sblm[1]{\begin{sublemma}}
\nc\pro[1]{\begin{prop}}
\nc\thm[1]{\begin{theorem}}
\nc\cor[1]{\begin{corollary}}
\nc\dfn[1]{\begin{defin}}
\nc\sthm[1]{\begin{subth}}
\nc\exm[1]{\begin{example}}
\nc\miniexm[1]{\begin{miniexample}}
\nc\plm[1]{\begin{prblm}}
\nc\rmk[1]{\begin{remark}}
\nc\subrmk[1]{\begin{subremark}}
\nc\ntn[1]{\begin{notation}}
\nc\cau[1]{\begin{caution}}
\nc\imn[1]{\begin{importnota}}
\nc\cax[1]{\begin{cauex}}
\nc\con[1]{\begin{construction}}
\nc\ssthm[1]{\begin{ssubth}}
\nc\cnc[1]{\begin{conclusion}}
\nc\elem{\end{lemma}}
\nc\esblm{\end{sublemma}}
\nc\eside{\end{sidest}}
\nc\econj{\end{conjecture}}
\nc\eprodef{\end{propositiondef}}
\nc\eprt{\end{proto}}
\nc\ethm{\end{theorem}}
\nc\ecor{\end{corollary}}
\nc\edfn{\end{defin}}
\nc\esthm{\end{subth}}
\nc\epro{\end{prop}}
\nc\etri{\end{triviality}}
\nc\eexm{\end{example}}
\nc\eminiexm{\end{miniexample}}
\nc\ermk{\end{remark}}
\nc\subermk{\end{subremark}}
\nc\eplm{\end{prblm}}
\nc\ecau{\end{caution}}
\nc\ecax{\end{cauex}}
\nc\eimn{\end{importnota}}
\nc\entn{\end{notation}}
\nc\econ{\end{construction}}
\nc\ecnc{\end{conclusion}}
\nc\essthm{\end{ssubth}}
\newcommand{\C}{\mathbb{C}}
\newcommand{\R}{\mathbb{R}}
\newcommand{\Q}{\mathbb{Q}}
\newcommand{\Z}{\mathbb{Z}}
\newcommand{\X}{\mathfrak{X}}
\newcommand{\A}{\mathbb{A}}
\renewcommand{\O}{\mathcal{O}}
\newcommand{\diag}{{\rm diag}}
\newcommand{\G}{\Gamma}
\renewcommand{\o}{\mathfrak{o}}
\newcommand{\ds}{\displaystyle}
\newcommand{\tr}{{\rm Tr}}
\newcommand{\lra}{\longrightarrow}
\newcommand{\ba}{\backslash}
\newcommand{\disc}{{\rm disc}}
\renewcommand{\Bbb}{\mathbb}
\title[Cusp forms for exceptional group of type $E_{7}$]
{Cusp forms for exceptional group of type $E_{7}$}
\author{Henry H. Kim and Takuya Yamauchi}
\keywords{exceptional group of type $E_7$, Ikeda lift, Eisenstein series, Langlands functoriality}
\thanks{The first author is partially supported by NSERC. The second author
is partially supported by JSPS Grant-in-Aid for Scientific Research No.23740027 and JSPS Postdoctoral 
Fellowships for Research Abroad No.378.}
\subjclass[2010]{}
\address{Henry H. Kim \\
Department of mathematics \\
 University of Toronto \\
Toronto, Ontario M5S 2E4, CANADA \\
and Korea Institute for Advanced Study, Seoul, KOREA}
\email{henrykim@math.toronto.edu}
\address{Takuya Yamauchi \\
Department of mathematics, Faculty of Education\\
Kagoshima University\\
Korimoto 1-20-6 Kagoshima 890-0065, JAPAN and 
Department of mathematics \\
 University of Toronto \\
Toronto, Ontario M5S 2E4, CANADA}
\email{yamauchi@edu.kagoshima-u.ac.jp or tyama@math.toronto.edu}
\begin{document}
\begin{abstract}
Let $\bold{G}$ be the connected reductive group of type $E_{7,3}$ over $\Q$ and $\frak T$ be the corresponding 
symmetric domain in $\C^{27}$. Let $\Gamma=\bold{G}(\Z)$ be the arithmetic subgroup defined by Baily. In this paper, for any positive integer $k\ge 10$, we will construct a (non-zero) holomorphic cusp form on $\frak T$ of weight $2k$ with respect to $\G$ from 
a Hecke cusp form in $S_{2k-8}(SL_2(\Z))$. This lift is an analogue of Ikeda's construction 
(\cite{Ik2},\cite{Ik3},\cite{Yamana}).  
\end{abstract}
\maketitle
\tableofcontents

\section{Introduction}
Let $\bold{G}$ be  the exceptional Lie group  of type $E_{7,3}$ over $\Q$ and $\frak T\subset \C^{27}$ 
the corresponding bounded symmetric domain.  
The purpose of this paper is to construct holomorphic cusp forms on $\frak T$ from cusp forms for $SL_2$ over $\Q$. 
In \cite{Ik1}, Ikeda originally gave a (functorial) construction of a Siegel cusp form for $Sp_{2n}$ (rank $2n$) from a normalized Hecke eigenform on the upper half-plane $\mathbb{H}$ with respect to $SL_2(\Z)$ which has been conjectured by Duke and Imamoglu 
(Independently Ibukiyama formulated a conjecture in terms of Koecher-Maass series). He made use of the uniform property of the Fourier 
coefficients of Siegel Eisenstein series for $Sp_{2n}$ over $\Q$ and together with various deep facts established in \cite{Ik1} 
to prove Duke-Imamoglu conjecture. 
After this work, his construction was generalized to unitary groups $U(n,n)$ (\cite{Ik2}), 
quaternion unitary groups $Sp(n,n)$ (\cite{Yamana}), and symplectic groups $Sp_{2n}$ over totally real fields (\cite{Ik4},\cite{Ik&H}). 
Historically, in the case of $Sp_2$, the resulting cusp form is called Saito-Kurokawa lift which has been studied thoroughly 
(\cite{kuro}, \cite{ps}, \cite{cog&ps}). Our method follows his construction. The main obstruction is the hugeness of 
$E_{7,3}$. In aforementioned works, the theory of Jacobi forms has been understood well since the Heisenberg group inside the group in consideration is easy to handle. On the other hand, 
much less is known in the case of $E_{7,3}$. Therefore we have to consider a suitable Heisenberg subgroup in $E_{7,3}$ which has not been studied. To do this we analyze it in terms of roots. 

We now explain our main theorem. We refer the next section for the several notations which appear below.  
Let $\Gamma=\bold{G}(\Z)$ be the arithmetic subgroup defined by Baily in \cite{B} which is 
constructed by using the integral Cayley numbers $\frak o$. 
For a positive integer $k\ge 10$, let $E_{2k}$ be the Siegel Eisenstein series on $\frak T$ of 
weight $2k$ with respect to $\Gamma$. Then it has the Fourier expansion of form 
\begin{eqnarray*}
E_{2k}(Z) &=& \sum_{T\in \frak J(\Z)_+} a_{2k}(T) \exp(2\pi\sqrt{-1}(T,Z)),\ Z\in\frak T,\\
a_{2k}(T) &=& C_{2k}\det(T)^{\frac{2k-9}{2}}\prod_{p|\det(T)} \widetilde{f}^p_T(p^{\frac{2k-9}{2}}),
\end{eqnarray*}
where $C_{2k}=2^{15}\displaystyle\prod_{n=0}^2 \frac {2k-4n}{B_{2k-4n}}$, and 
$\widetilde{f}^p_T(X)$ is a Laurent polynomial over $\Q$ in $X$ which is depending only on $T$ and $p$.  

Let $S_{2k-8}(SL_2(\Z))$ be the space of elliptic cusp forms of weight $2k-8\geq 12$ with respect to $SL_2(\Z)$.  
For each normalized Hecke eigenform $f=\ds\sum_{n=1}^\infty c(n)q^n,\ q=\exp(2\pi \sqrt{-1}\tau),\ \tau\in \mathbb{H}$ in $S_{2k-8}(SL_2(\Z))$ and each rational prime $p$, 
we define the Satake $p$-parameter $\alpha_p$ by $c(p)=p^{\frac{2k-9}{2}}(\alpha_p+\alpha^{-1}_p)$. 
For such $f$, consider the following formal series on $\frak T$: 
$$F(Z)=\sum_{T\in \frak J(\Z)_+} A(T)\exp(2\pi\sqrt{-1}(T,Z)),\ Z\in \frak T, \quad 
A(T)=\det(T)^{\frac{2k-9}{2}} \prod_{p|\det(T)} \widetilde{f}_T^p(\alpha_p).
$$
Then we will show 
\begin{theorem}\label{main-thm1} The function $F(Z)$ is a non-zero Hecke eigen cusp form on $\frak T$ of weight $2k$ with respect to 
$\G$. 
\end{theorem}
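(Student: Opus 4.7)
The strategy is to follow Ikeda's construction for $Sp_{2n}$, but the principal new difficulty is the absence of a classical Jacobi form theory for $E_{7,3}$: the relevant Heisenberg subgroup must be identified and analyzed using the root system of $E_7$, as the authors signal in the introduction.

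First I would handle the routine properties. Convergence and holomorphy of $F$ on $\frak T$ follow from Deligne's bound $|\alpha_p|=1$ combined with control over the Laurent polynomials $\widetilde{f}^p_T$, which makes $|A(T)|$ comparable in size to $|a_{2k}(T)/C_{2k}|$; hence $F$ converges in the same region as $E_{2k}$. Non-vanishing is immediate: for any $T\in\frak J(\Z)_+$ with $\det T=1$ the product over primes is empty, so $A(T)=1$. Cuspidality, once modularity is known, is automatic because the Fourier support of $F$ lies in the positive cone $\frak J(\Z)_+$, so every constant term along a proper parabolic vanishes.

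The central step is modularity under $\Gamma$. Translation invariance is built into the Fourier expansion, so the real task is invariance under a generator outside the Jacobi parabolic stabilizing the cusp at infinity, analogous to the Siegel involution in Ikeda's original setting. The plan is to fix a maximal parabolic $P\subset\bold G$ whose unipotent radical $N$ is the Heisenberg-type group to be constructed via root analysis, and to expand both $F$ and $E_{2k}$ along the center of $N$. I would then try to show that each Fourier-Jacobi coefficient $F_m$ is a Maass-type lift manufactured from $f$ together with the corresponding Fourier-Jacobi coefficient $E_{2k,m}$ of the Eisenstein series, with a transformation law under the Levi of $P$ inherited from the modularity of $E_{2k}$. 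Because the individual lifts $F_m$ carry the correct covariance, patching them back along the expansion should produce invariance under the missing Weyl element, hence full $\Gamma$-invariance.

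The Hecke eigen property is intrinsic to the construction: at each prime $p$, substituting $\alpha_p$ for $p^{(2k-9)/2}$ in the local polynomial corresponds, via the Satake isomorphism for $\bold G(\Q_p)$, to transferring the Satake parameter of $f$ along the functorial lift $SL_2\to\bold G$, from which the Hecke eigenvalues of $F$ may be read off. The main obstacle throughout, as the authors emphasize, will be the modularity step and within it the analysis of the Heisenberg subgroup of $E_{7,3}$ and the realization of the Maass-type lift in this exceptional setting, since none of the prior Ikeda-type constructions for $Sp_{2n}$, $U(n,n)$, or $Sp(n,n)$ cover it and the requisite Fourier-Jacobi theory must essentially be built from scratch.
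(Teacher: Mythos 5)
Your outline is essentially the paper's strategy (convergence from $|\alpha_p|=1$ together with bounds on the coefficients of $\widetilde{f}^p_T$; non-vanishing from $A(T)=1$ when $\det T=1$; cuspidality from the Fourier support in $\frak J(\Z)_+$ and the single cusp class; reduction of modularity to the one missing generator $\iota_{e_3}$ via the Fourier--Jacobi expansion along the Heisenberg parabolic $\bold Q=\bold L\bold V$). But the central step has a genuine gap: you claim each Fourier--Jacobi coefficient of $F$ carries ``a transformation law inherited from the modularity of $E_{2k}$.'' That cannot work with the single Eisenstein series $E_{2k}$. The coefficients $A(T)$ arise by substituting $\alpha_p$ for $p^{(2k-9)/2}$ inside the Laurent polynomials $\widetilde{f}^p_T$, and there is no operation carrying $E_{2k}$ to $F$; the modularity of the components $F_{S,\xi}$ must be extracted from the whole family $\{E_{2k'}\}_{k'\gg 0}$ of Eisenstein series of \emph{varying} weight. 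Concretely the paper (i) proves adelically that for every large $k'$ the $(S,\xi)$-components of the normalized Eisenstein series $\mathcal{E}_{2k',0}$ are elliptic Eisenstein series lying in $\mathcal V(E^1_{2k'-8})$ whose Fourier coefficients are a fixed element of $\mathcal R$ evaluated at $\{p^{\frac{2k'-9}{2}}\}_p$ --- this is Theorem \ref{key-thm}, the main technical part, resting on the double coset decomposition $G=P\xi_2Q\cup P\xi_1Q\cup P\xi_0Q$ and the Weil representation computation --- so that they form a compatible family in the sense of Definition \ref{family}; and then (ii) invokes Ikeda's interpolation lemma (Lemma \ref{ikeda-lem}, from \cite{Ik2}) to conclude that the vector $\{F_{S,\xi}\}_{\xi\in\Xi(S)}$, obtained by evaluating the \emph{same} Laurent polynomials at $\{\alpha_p\}_p$, is a vector-valued modular form for the theta multiplier $u_S$, whence $F_S\in J_{2k,S}(\Gamma_J)$ and invariance under $\iota_{e_3}$ follows. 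Without the family over all weights and the compatible-family lemma, your ``patching'' step has nothing to patch.

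The Hecke eigenform claim has the same structural problem: reading off eigenvalues ``via the Satake isomorphism along the lift $SL_2\to\bold G$'' presupposes that $F$ already generates an automorphic representation with prescribed unramified local components, which is precisely what is to be proved. The paper again argues by interpolation: Karel's theorem gives that every $\mathcal E_{2k',0}$ is an eigenform, producing an identity of Laurent polynomials in the variables $X_p$ (valid because it holds at the infinitely many specializations $X_p=p^{\frac{2k'-9}{2}}$), and substituting $X_p=\alpha_p$ then yields the eigenvalue identity for $F$ with respect to the modified action $\star$.
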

If $f$ has integer Fourier coefficients, then $F$ also has integer Fourier coefficients (Remark \ref{integer}).
By virtue of Theorem \ref{main-thm1}, $F=F(Z)$ gives rise to a cuspidal automorphic representation 
$\pi_F=\pi_\infty\otimes \otimes'_p\pi_p$ 
of $\bold G(\A)$. Then $\pi_\infty$ is a holomorphic discrete series of the lowest weight $2k$ associated to $-2k\varpi_7$ in the notation of \cite{Bour} (cf. \cite{knapp}, page 158).
For each prime $p$, $\pi_p$ is unramified. In fact, $\pi_p$ turns out to be a degenerate principal series 
${\rm Ind}_{\bold{P}(\Q_p)}^{\bold{G}(\Q_p)}\: |\nu(g)|^{2s_p}$, where $p^{s_p}=\alpha_p$. 
Then for each local component $\pi_p$, one can associate the local $L$-factor $L(s,\pi_p,St)$ of the standard $L$-function of $\pi_F$
 by using the Langlands-Shahidi method. 
Put $L(s,\pi_F,St)=\ds\prod_p L(s,\pi_p,St)$ and let 
$L(s,\pi_f)=\prod_p (1-\alpha_p p^{-s})(1-\alpha_p^{-1} p^{-s})$ be the 
automorphic $L$-function of the cuspidal representation $\pi_f$ attached to $f$. Then

\begin{theorem}\label{main-thm2} The degree $56$ standard L-function $L(s,\pi_F,St)$ of $\pi_F$ is given by
$$L(s,\pi_F,St)=L(s,{\rm Sym}^3 \pi_f)L(s,\pi_f)^2 \prod_{i=1}^4 L(s\pm i,\pi_f)^2 \prod_{i=5}^8 L(s\pm i, \pi_f),
$$
where $L(s,{\rm Sym}^3 \pi_f)$ is the symmetric cube $L$-function.
\end{theorem}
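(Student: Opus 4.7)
The plan is to compute, at each finite prime $p$, the Satake parameter $t_p\in\widehat{\bold G}(\C)=E_7(\C)$ of the unramified local component $\pi_p$, and then read off the degree-$56$ local $L$-factor by evaluating the weights of the standard minuscule representation $V_{\varpi_7}$ at $t_p$. Since $\pi_p$ is the spherical constituent of the degenerate principal series ${\rm Ind}_{\bold P(\Q_p)}^{\bold G(\Q_p)}|\nu|^{2s_p}$ induced from the Heisenberg maximal parabolic $\bold P$ (whose Levi $M$ has derived group of type $E_6$), the inducing character is one-dimensional and trivial on $M^{\rm der}=E_6$. The trivial representation of $E_6$ appears as the Langlands quotient of the principal series of $E_6$ induced from $\delta_{B\cap E_6}^{1/2}$; equivalently, its local Arthur parameter is the principal homomorphism $SL_2\to E_6(\C)$. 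Combining this with the unramified twist $|\nu|^{2s_p}$ on the central $GL_1$ and embedding into $\widehat{\bold G}=E_7(\C)$ via the Levi, I would obtain the Satake element
\[
t_p \;=\; \rho^\vee_{E_6}(p)\cdot e(\alpha_p)\ \in\ \widehat T,
\]
where $\rho^\vee_{E_6}$ is the principal cocharacter of $E_6$ (half-sum of positive coroots) and $e$ is the cocharacter of $\widehat T$ identifying $GL_1$ with the connected center of $M$, normalized so that $\langle\varpi_7,e\rangle=3$.

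Next, I would use the branching of the $56$-dimensional representation under $E_6\times GL_1$:
\[
V_{\varpi_7}\;=\; V_{\varpi_1}^{E_6}(1)\,\oplus\, V_{\varpi_6}^{E_6}(-1)\,\oplus\, \bold 1(3)\,\oplus\, \bold 1(-3),
\]
the numbers in parentheses being the $GL_1$-weights. Since the expansion of $\varpi_1$ in the simple roots of $E_6$ has coefficients summing to $8$, one has $\langle\varpi_1,\rho^\vee_{E_6}\rangle=8$, so the heights $\langle\lambda,\rho^\vee_{E_6}\rangle$ of the $27$ weights of $V_{\varpi_1}^{E_6}$ range over $\{-8,\dots,8\}$. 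A direct enumeration (or the Weyl character formula at the principal specialization) yields the multiplicities $m(0)=3$, $m(\pm i)=2$ for $i=1,2,3,4$, and $m(\pm i)=1$ for $i=5,6,7,8$. As $V_{\varpi_6}^{E_6}$ is dual to $V_{\varpi_1}^{E_6}$ and the multiplicities $m(h)$ are symmetric in $h$, it has the same height decomposition.

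Evaluating $t_p$ on $V_{\varpi_7}$ thus produces the $56$ eigenvalues $\alpha_p^{\pm 3}$ (each once) together with $\alpha_p^{\pm 1}p^h$ with multiplicity $m(h)$ for $h\in\{-8,\dots,8\}$, giving
\[
L(s,\pi_p,St)=(1-\alpha_p^3 p^{-s})^{-1}(1-\alpha_p^{-3}p^{-s})^{-1}\prod_{h=-8}^{8}L_p(s-h,\pi_f)^{m(h)}.
\]
Inserting the identity $L_p(s,{\rm Sym}^3\pi_f)=L_p(s,\pi_f)(1-\alpha_p^3 p^{-s})^{-1}(1-\alpha_p^{-3}p^{-s})^{-1}$, grouping positive and negative heights via $m(h)=m(-h)$, and taking the product over all $p$ yields the asserted factorization. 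The main obstacle is the first step: justifying that the Arthur $SL_2$ of the spherical constituent is the principal $SL_2$ inside the Levi factor $E_6$, equivalently that the inducing datum is (up to an unramified twist) the trivial representation of $E_6$; once this structural input is in hand, the remainder is the weight-multiplicity bookkeeping for the $27$-dimensional representation of $E_6$.
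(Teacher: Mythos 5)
Your factorization is correct and your route, while arriving at the same bookkeeping, is packaged differently from the paper's. The paper proceeds by the Langlands--Shahidi method: it realizes the split $E_7$ group $\bold G(\Q_p)$ as the Levi of a maximal parabolic $R$ of split $E_8$, rewrites ${\rm Ind}_{R}^{E_8}\,\pi_p\otimes\exp(s\tilde\alpha,H_R(\cdot))$ as a full principal series with parameter $\chi=s(e_1-e_9)+s_p(-e_1+2e_2-e_9)+\rho_{E_6}$, and then enumerates the $57$ roots of the unipotent radical of $R$; the highest root $e_1-e_9$ contributes $1-p^{-2s}$ and the remaining $56$ roots are listed one by one to produce exactly the factors $1-\alpha_p^{\pm 1}p^{\pm i-s}$ and $1-\alpha_p^{\pm 3}p^{-s}$ that you obtain. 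Your computation evaluates the $56$ weights of the minuscule representation $V_{\varpi_7}$ of $\widehat{\bold G}=E_7(\C)$ at the Satake parameter $\rho^\vee_{E_6}(p)\,e(\alpha_p)$, using the branching $56=27_{(1)}\oplus\overline{27}_{(-1)}\oplus 1_{(3)}\oplus 1_{(-3)}$ under $E_6\times GL_1$ and the height multiplicities $m(0)=3$, $m(\pm i)=2$ for $1\le i\le 4$, $m(\pm i)=1$ for $5\le i\le 8$ (equivalently, $27=Sym^{16}\oplus Sym^{8}\oplus Sym^{0}$ under the principal $SL_2$ of $E_6$, which is precisely the shape of the conjectural Arthur parameter the paper records in its final remark). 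These are the same $56$ characters of the maximal torus, since the Levi action on $\mathfrak n_R/\mathfrak z$ is the $56$-dimensional representation; so neither approach is more powerful here, but yours makes the Arthur-type structure visible at the outset, while the paper's is self-contained given only root data for $E_8$. The structural input you flag as the ``main obstacle'' --- that $\pi_p\simeq{\rm Ind}_{\bold P(\Q_p)}^{\bold G(\Q_p)}|\nu(g)|^{2s_p}$ --- is supplied by the paper before this theorem (via the Hecke eigenform property, Casselman's subquotient theorem, and the irreducibility of $I_p(s)$ away from $s=\pm1,\pm5,\pm9$), so you may take it as given; the one detail worth pinning down in your write-up is the normalization of the central cocharacter $e$ against the convention $p^{s_p}=\alpha_p$ with inducing character $|\nu|^{2s_p}$ (the similitude $\nu$ is cubic on the center of $\bold M$, which is what reconciles the exponent $2s_p$ with the eigenvalues $\alpha_p^{\pm1},\alpha_p^{\pm3}$).
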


This paper is organized as follows. In Section 2, we fix notations on Cayley numbers and exceptional Jordan algebras and review their properties. In Section 3, we review the exceptional group of type $E_{7,3}$ and prove many facts which are not available in the literature. In Section 4, we define the Jacobi group inside the exceptional group using the root subgroups, and recall Weil representations and theta functions.
In Section 5, we review modular forms on the exceptional domain and define Jacobi forms of matrix indices and study the Fourier-Jacobi coefficients of a modular form
both in classical setting and in adelic setting. In Section 6, we review the result of M. Karel on Fourier coefficients of Eisenstein series and interpret Eisenstein series in terms of degenerate principal series, following \cite{Ku}. Section 7 is the main technical part, where we prove the analogue of Ikeda's result \cite{Ik3}, namely, the Fourier-Jacobi coefficients of Eisenstein series are a sum of products of theta functions and Eisenstein series. 
In Section 9, by following Ikeda \cite{Ik1},\cite{Ik2}, we construct a holomorphic cusp form on the exceptional group of type $E_{7,3}$.
Our situation is similar to unitary group case, in that we do not need to consider half-integral modular forms. 
In Section 10, we review the Hecke operators from Karel's thesis \cite{Ka1} and modify it to fit into representation theory. 
Then we prove that our cusp form is a Hecke eigenform with respect to this modified action. The degree $56$ standard $L$-function helps us to speculate on the Arthur parameter of $\pi_F$. We make a brief remark on it at the end of Section 11.
In the Appendix, we compute the discriminant of some quadratic forms and prove the orthogonal relation of theta functions we need. 

\medskip

\noindent\textbf{Acknowledgments.} We would like to thank T. Ibukiyama, T. Ikeda, H. Katsurada, M. Nevins, K. Takase, and S. Yamana for helpful discussions. We thank the referees for helpful remarks and encouragement. 

\section{Cayley numbers and exceptional Jordan algebras}\label{Cayley}
In this section we will recall the Cayley numbers and the exceptional Jordan algebras. We refer \cite{B},\cite{Coxeter}, and \cite{kim}. 
For any field $K$ whose characteristic is different from $2$ and $3$, 
the Cayley numbers $\frak C_K$ over $K$ is an eight-dimensional vector space over $K$ with basis 
$\{e_0=1,e_1,e_2,e_3,e_4,e_5,e_6,e_7\}$ satisfying the following rules for multiplication:
\begin{enumerate}
\item  $xe_0=e_0x=x$ for all $x\in\frak C_K$,
\item $e^2_i=-e_0$ for $i=1,\ldots, 7$,
\item $e_ie_{i+1}e_{i+3}=-e_0$ for any $i$ (mod 7).  
\end{enumerate}
For each $x=\ds\sum_{i=0}^7x_ie_i\in \frak C_K$, 
the map $x\mapsto \bar{x}=x_0e_0-\ds\sum_{i=1}^7x_ie_i$ defines an anti-involution of $\frak C_K$.  
The trace and the norm on $\frak C_K$ are defined by 
$$\tr(x):=x+\bar{x}=2x_0,\ N(x):=x\bar{x}=\sum_{i=0}^7x^2_i.
$$
The Cayley numbers $\frak C_K$ is neither commutative nor associative. In spite of this, we have 
$$\tr(xy)=\tr(yx),\ \tr(x\bar{y})=\tr(\bar{x}y),\ \tr((xy)z)=\tr(x(yz)).$$
We denote by $\frak o$, the integral Cayley numbers 
which is a $\Z$-submodule of $\frak C_K$ given by the following basis:
$$\alpha_0=e_0,\ \alpha_1=e_1,\ \alpha_2=e_2,\ \alpha_3=-e_4,\ 
\alpha_4=\frac{1}{2}(e_1+e_2+e_3-e_4),\ \alpha_5=\frac{1}{2}(-e_0-e_1-e_4+e_5),$$
$$\alpha_6=\frac{1}{2}(-e_0+e_1-e_2+e_6),\ \alpha_7=\frac{1}{2}(-e_0+e_2+e_4+e_7).$$
As shown in \cite{Coxeter}, $\frak o$ is stable under the operations of the anti-involution, multiplication, and 
addition. Further we have $\tr(x),\ N(x)\in \Z$ if $x\in \frak o$. By using this integral structure, 
for any $\Z$-algebra $R$, one can consider $\frak C_R=\frak o\otimes_\Z R$. 

Let $\frak J_K$ be the exceptional Jordan algebra consisting of the element: 
\begin{equation}\label{x}
X=(x_{ij})_{1\le i,j\le 3}=\left(\begin{array}{ccc}
a& x & y \\
\bar{x} & b & z \\
\bar{y}& \bar{z} & c
\end{array}\right),
\end{equation}
where $a,b,c\in Ke_0=K$ and $x,y,z\in \frak C_K$. 
In general, the matrix multiplication $X\cdot Y$ for two elements $X,Y\in \frak J_K$ does not belong to 
$\frak J_K$, but the square $X^2=X\cdot X$ always does. The composition of $\frak J_K$ is given by 
$$X\circ Y=\frac{1}{2}(X\cdot Y+Y\cdot X).$$ 
For the above $X$, we define the trace by $\tr(X):=a+b+c$,
and define an inner product on $\frak J_K\times \frak J_K$ by $(X,Y):=\tr(X\circ Y).$
Moreover we define 
$$\det(X):=abc-aN(z)-bN(y)-cN(x)+\tr((xz)\bar{y})$$
and a symmetric tri-linear form $(\ast,\ast,\ast)$ on $\frak J_K\times \frak J_K\times \frak J_K$ 
by 
$$
(X,Y,Z):=\frac{1}{6}\{\det(X+Y+Z)-\det(X+Y)-\det(Y+Z)-\det(Z+X)+\det(X)+\det(Y)+\det(Z)\}.
$$  
Then we define a bilinear pairing $\frak J_K\times \frak J_K\lra \frak J_K, (X,Y)\mapsto X\times Y$ by requiring 
the identity 
$$3(X,Y,Z)=(X\times Y,Z)=\tr((X\times Y)\circ Z)\ {\rm for\ any\ }Z\in \frak J_K.$$
In particular, for $X_i,\ i=1,2$ with entries as in (\ref{x}), we have 
\begin{equation}
X_1\times X_2=\left(\begin{array}{ccc}
\frac{b_1c_2+c_1b_2}{2}-\frac{\overline{z}_1z_2+z_2\overline{z}_1}{2}& A & B \\
\overline{A} & \frac{a_1c_2+c_1a_2}{2}-\frac{\overline{y}_1y_2+y_2\overline{y}_1}{2} & C \\
\overline{B} & \overline{C} & \frac{a_1b_2+b_1a_2}{2}-\frac{\overline{x}_1x_2+x_2\overline{x}_1}{2}
\end{array}\right)
\end{equation} 
where $A=\ds\frac{-c_1x_2-c_2x_1}{2}+\frac{y_1\overline{z}_2+y_2\overline{z}_1}{2},\ 
B=\frac{-b_1y_2-b_2y_1}{2}+\frac{x_1z_2+x_2z_1}{2},\ $ and 
$C=\ds\frac{-a_1z_2-a_2z_1}{2}+\frac{\overline{x}_1y_2+\overline{x}_2y_1}{2}$. 
By using integral Cayley numbers, we define a lattice
$$\frak{J}(\Z):=\{X=(x_{ij})\in \frak J_\Q\ |\ \text{$x_{ii}\in \Bbb Z$, and $x_{ij}\in \frak o$ for $i\ne j$} \},
$$
and put $\frak{J}(R)=\frak{J}(\Z)\otimes_\Z R$ for any $\Z$-algebra $R$. 
Although the composition ``$\circ$" does not preserve the integral structure, but 
the inner product $(\ast,\ast)$ does. Hence $(\frak{J}(R),\frak{J}(R))\in R$. 
Then one can show that the lattice $\frak J(\Z)$ in $\frak J_\Q$ is the self-dual 
with respect to $(\ast,\ast)$, namely 
$$\widetilde{\frak J(\Z)}:=\{X\in \frak J_\Q\ |\ (X,Y)\in \Z\ {\rm for\ all}\ Y\in \frak J(\Z)\}=\frak J(\Z).$$  
We also define $\frak{J}_2(R)$ as the set of all matrices of forms  
$$X=\left(\begin{array}{cc}
a& x  \\
\bar{x} & b 
\end{array}\right),\ a,b\in R,\ x\in \frak C_R.$$
Similarly we define the inner product on $\frak{J}_2(R)\times \frak{J}_2(R)$ 
by $(X,Y):=\frac 12\tr(XY+YX)$. 
For any such $X$, we define $\det(X):=ab-N(x).$
For $X$ as above, $r\in R$, and $\xi=\left(\begin{array}{c}
\xi_1 \\
\xi_2
\end{array}\right),\ \xi_i\in \frak C_R\ (i=1,2)$, it is easy to see that 
\begin{equation}\label{formula}
\det\left(\begin{array}{cc}
X & X\xi \\
{}^t\bar{\xi}X & r
\end{array}\right)=
\det(X)(r-{}^t\bar{\xi}X\xi)=\det(X)(r-(X,{}^t\bar{\xi}\xi))
\end{equation}
which will be used later (Section \ref{construction}). 
Henceforth we identify $\frak J_2(R)$ with a subspace of $\frak J(R)$ by 
$\left(\begin{array}{cc}
a& x  \\
\bar{x} & b 
\end{array}\right)\mapsto \left(\begin{array}{ccc}
a& x & 0 \\
\bar{x} & b & 0 \\
0& 0 & 0
\end{array}\right).$

We define 
$$R_3(K)=\{X\in \frak J_K\ |\ \det(X)\not=0 \}
$$ 
and define the set $R^+_3(K)$ consisting of squares of elements in $R_3(K)$. 
It is known that $R^+_3(\R)$ is an open, convex cone in $\frak J_\R$. 
We denote by $\overline{R^+_3(\R)}$ the closure of $R^+_3(\R)$ in $\frak J_\R\simeq \R^{27}$ 
with respect to Euclidean topology. For any subring $A$ of $\R$, set 
$$\frak J(A)_+:=\frak J(A)\cap R^+_3(\R),\quad \frak J(A)_{\ge 0}:=\frak J(A)\cap \overline{R^+_3(\R)}.
$$
We also define  
$$\frak J_2(A)_{+}=\Bigg\{\left(\begin{array}{cc}
a& x  \\
\bar{x} & b 
\end{array}\right)\in\frak J_2(A) \ \Bigg|\ a,b\in A\cap \R_{>0},\ ab-N(x)>0  \Bigg\},$$
and  
$$\frak J_2(A)_{\ge 0}=\Bigg\{\left(\begin{array}{cc}
a& x  \\
\bar{x} & b 
\end{array}\right)\in \frak J_2(A)\ \Bigg|\ a,b\in A\cap \R_{\ge 0},\ ab-N(x)\ge 0  \Bigg\}.$$
We define the exceptional domain as follows:
$$\frak T:=\{Z=X+Y\sqrt{-1}\in \frak J_\C\ |\ X,Y\in \frak J_\R,\ Y\in R^+_3(\R)\}$$
which is a complex analytic subspace of $\C^{27}$ . We also define 
$$\frak T_2:=\{X+Y\sqrt{-1}\in \frak J_2(\C)\ |\ X,Y\in \frak J_2(\R),\ Y\in \frak J_2(\R)_+\}.
$$

\section{Exceptional group of type $E_{7,3}$}\label{E7}
In this section we recall the exceptional group of type $E_{7,3}$. Put $\frak J=\frak J_K$ where $K$ is a field 
whose characteristic is different from $2$ and $3$. 
Define two subgroups of $GL(\frak J)$ by
\begin{eqnarray*}
\bold M &=&\{g\in GL(\frak J)\ |\, \det(gX)=\nu(g)\det(X), \text{for $\nu(g)\ne 0$}\}\\
\bold M' &=&\{g\in \bold M\ |\, \nu(g)=1\}.
\end{eqnarray*}

Then $\bold M$ is an algebraic group over $\Q$ of type $GE_{6}$, and $\bold M'$ is the derived group of $\bold M$, which is a simple group of type $E_{6,2}$. The center of $\bold M'$ is the group of cube roots of unity.

There is an automorphism $g\longmapsto g^*$ of $\bold M$ of order 2 by the identity 
\begin{equation}\label{identity}
(gX, g^*Y)=(g^* X, gY)=(X,Y).
\end{equation}
 Then $g^*$ is the inverse adjoint of $g$. It satisfies $g(X\times Y)=(g^*X)\times (g^*Y).$

Let $\bold G$ be the algebraic group over $\Q$ as in \cite{B}: Let $\bold X, \bold X'$ be two $K$-vector spaces, each isomorphic to $\frak J$, and $\Xi, \Xi'$ be copies of $K$. Let $\bold W=\bold X\oplus \Xi\oplus \bold X'\oplus \Xi'$, and for $w=(X,\xi,X',\xi')\in\bold W$,
define a quartic form $Q$ on $\bold W$ by 
$$Q(w)=(X\times X, X'\times X')-\xi \det(X)-\xi' \det(X')-\frac 14( (X,X')-\xi\xi')^2,
$$
and a skew-symmetric bilinear form $\{\,,\,\}$ by
$$\{w_1,w_2\}=(X_1,X_2')-(X_2,X_1')+\xi_1\xi_2'-\xi_2\xi_1'.
$$
Then 
$$\bold G(K)=\{ g\in GL(\bold W_K) |\, Qg=Q,\, g\{\,,\,\}=\{\,,\,\}\}.
$$
This defines a connected algebraic $\Q$-group of type $E_{7,3}$; The center of $\bold G(\R)$ is $\{\pm \text{id}\}$
 and the quotient of $\bold G(\R)$ by its center is the group of holomorphic automorphisms of $\frak T$. The real rank of $\bold G$ is 3, and it is split over $\Bbb Q_p$ for any prime $p$.

The group $\bold M$ can be considered as a subgroup of $\bold G$ by defining the action
$$g(X,\xi,X',\xi')=(gX,\nu(g)\xi,g^*X',\nu(g)^{-1}\xi').
$$
Let $\bold N$ be the subgroup of all transformations $p_B$ for $B\in \frak J$ as in \cite{B}. Recall the definition.
$$
p_B\begin{pmatrix} X\\ \xi\\X'\\ \xi'\end{pmatrix}=\begin{pmatrix} X+\xi' B\\ \xi+(B,X')+(B\times B,X)+\xi' \det(B)\\ X'+2B\times X+\xi' B\times B\\ \xi'\end{pmatrix}.
$$
The relative root system of $\bold G$ over $\Q$ is of type $C_3$, and we denote the positive roots by
$\{e_1\pm e_2, e_1\pm e_3, e_2\pm e_3, 2e_1, 2e_2, 2e_3\}$, and let $\Delta=\{e_1-e_2, e_2-e_3, 2e_3\}$ be the set of simple roots.
We describe their root spaces: For a positive root $\alpha$, let $\bold U_{\alpha}$ be the root subspace. For $1\leq i\leq j\leq 3$, let $e_{ij}$ is the $3\times 3$ matrix with a 1 in the intersection of the $i$-th row and $j$-th column and zeros elsewhere, and let $e_i=e_{ii}$. 
Then for $a,b,c\in K$, $x,y,z\in \frak C_K$,
\begin{eqnarray*}
&& \bold U_{2e_1}=\{p_{a e_1}\},\quad \bold U_{2e_2}=\{p_{a e_2}\},\quad \bold U_{2e_3}=\{p_{a e_3}\}\\
&& \bold U_{e_1+e_2}=\{p_{x e_{12}}\},\quad \bold U_{e_1+e_3}=\{p_{y e_{13}}\},\quad \bold U_{e_2+e_3}=\{p_{z e_{23}}\}\\
&& \bold U_{e_1-e_2}=\{m_{\bar x e_{21}}\in GL(\frak J):\, m_{\bar x e_{21}}X=(I+x e_{12})X(I+\bar x e_{21}),\, X\in \frak J\}\\
&& \bold U_{e_1-e_3}=\{m_{\bar y e_{31}}\in GL(\frak J):\, m_{\bar y e_{31}}X=(I+y e_{13})X(I+\bar y e_{31}),\, X\in \frak J\}\\
&& \bold U_{e_2-e_3}=\{m_{\bar z e_{32}}\in GL(\frak J):\, m_{\bar z e_{32}}X=(I+z e_{23})X(I+\bar z e_{32}),\,  X\in \frak J\}
\end{eqnarray*}

\begin{remark} Note that we are using different ordering of roots from \cite{B}. In \cite{B}, $\bold N$ consists of root spaces of negative non-compact roots. However, it is more convenient to make it correspond to positive roots so that it may correspond to the upper triangular matrices of the form $\begin{pmatrix} I_n&B\\O_n&I_n\end{pmatrix}$ in $Sp_{2n}$ case.
\end{remark}

Note the following 
\begin{equation*}
\quad m_{x e_{ij}}^*=m_{-\bar x e_{ji}}.
\end{equation*}
Let $\bold H$ be the group generated by $\bold U_{2e_3}$ and $\iota_{e_3}$, where
$\iota_{e_i}$ is the Weyl group element of $2e_i$, which is given by
$\iota_{e_i}=p_{e_i} p_{-e_i}' p_{e_i}$, where $p_{e_i}'$ generates the root subspace of $-2e_i$. Then $\bold H\simeq SL_2$.
Let $\iota=\iota_{e_1}\iota_{e_2}\iota_{e_3}$. Then $\iota^{-1}=-\iota$, and $p_B'=\iota p_{-B}\iota^{-1}$ will generate the opposite unipotent subgroup $\overline{\bold N}$ of $\bold N$. This $\iota$ plays the role of 
$\begin{pmatrix} 0&I_n\\-I_n&0\end{pmatrix}$ is $Sp_{2n}$. Its action is given by
$$\iota(X,\xi,X',\xi')=(-X',-\xi',X,\xi).
$$
We define two maximal parabolic $\Q$-subgroups: 
$$\bold P=\bold M\bold N,\quad \bold Q=\bold L\bold V,
$$
where $\bold V$ is generated by $\bold U_{\alpha}$ for $\alpha=e_1\pm e_3, e_2\pm e_3, e_1+e_2, 2e_1, 2e_2$. 
Then $\bold P$ is the Siegel parabolic subgroup associated to $\Delta-\{2e_3\}$, and 
$\bold Q$ is the parabolic subgroup associated to $\Delta-\{e_2-e_3\}$. Then
$\bold V$ is the Heisenberg group, and the derived group of $\bold L$ is $\bold L'=\bold H\times Spin(9,1)$.

\begin{lemma}\label{parab}
 For $g\in \bold M$ and $p_B\in\bold N$,
$$g p_B=p_{B_1}g,\quad B_1=\nu(g) g B.
$$
\end{lemma}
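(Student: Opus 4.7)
The strategy is a direct component-by-component verification on a generic element $w=(X,\xi,X',\xi')\in\bold W$. I would first expand $p_{B}(w)$ using the formula recalled above, then apply $g\in\bold M$ via its action on $\bold W$; this produces a quadruple whose entries are $gX+\xi'gB$, a scalar involving $\nu(g)\bigl(\xi+(B,X')+(B\times B,X)+\xi'\det(B)\bigr)$, $g^{*}X'+2g^{*}(B\times X)+\xi'g^{*}(B\times B)$, and $\nu(g)^{-1}\xi'$. Separately I would compute $g(w)$ and then apply $p_{B_{1}}$ with the candidate $B_{1}=\nu(g)gB$. Equality of the four components would then be the claim.

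The matching uses only the three defining properties of an element of $\bold M$: (i) the similitude identity $\det(gX)=\nu(g)\det(X)$; (ii) the cross-product identity $g(X\times Y)=(g^{*}X)\times(g^{*}Y)$, together with the companion identity $g^{*}(X\times Y)=(gX)\times(gY)$ obtained by replacing $g$ with $g^{*}$ and using $g^{**}=g$; and (iii) the pairing duality (\ref{identity}), namely $(gX,g^{*}Y)=(X,Y)$. The first and fourth slots match immediately: the first forces the relation $\nu(g)^{-1}B_{1}=gB$, which is precisely the definition of $B_{1}$, while the fourth slot is untouched by both routes. For the third component, identity (ii) yields $B_{1}\times gX=\nu(g)\,g^{*}(B\times X)$ and $B_{1}\times B_{1}=\nu(g)^{2}g^{*}(B\times B)$; the resulting $\nu(g)$-powers combine with the $\nu(g)^{-1}$ factor coming from the twisted action on $\xi'$ to reproduce $g^{*}\bigl(X'+2B\times X+\xi'B\times B\bigr)$.

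The second component is the most delicate and is where all three identities enter. One must verify, term by term, that $(B_{1},g^{*}X')$, $(B_{1}\times B_{1},gX)$, and $\det(B_{1})$ reduce to the expected multiples of $(B,X')$, $(B\times B,X)$, and $\det(B)$ respectively. Here (iii) handles the linear pairing, the combination of (ii) and (iii) handles the quadratic pairing (since $B_{1}\times B_{1}=\nu(g)^{2}g^{*}(B\times B)$ and then $(g^{*}Y,gX)=(Y,X)$), and (i) applied to $gB$, together with the homogeneity $\det(\lambda Y)=\lambda^{3}\det Y$, gives the determinantal term.

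The main obstacle is essentially bookkeeping: keeping the $\nu(g)$-exponents aligned across all four slots simultaneously. The definition $B_{1}=\nu(g)gB$ is forced by the first component, and the content of the lemma is that this single choice is then consistent with the other three slots — a consistency supplied exactly by the three structural identities (i)--(iii). No further input beyond the explicit formulas for $p_{B}$ and the $\bold M$-action is required.
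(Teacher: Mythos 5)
Your strategy is exactly the one the paper uses: expand $g\,p_B(w)$ and $p_{B_1}g(w)$ on a generic $w$ and compare the four slots, with the first slot forcing $B_1=\nu(g)gB$. The paper's own proof stops there (``by comparing coefficients''), so up to that point you are in step with it. The problem is that your claimed verification of the remaining slots does not close. Take the third component. You correctly derive, from the identity $g(X\times Y)=(g^*X)\times(g^*Y)$ recalled in Section 3, that $B_1\times gX=\nu(g)\,g^*(B\times X)$ and $B_1\times B_1=\nu(g)^2 g^*(B\times B)$. But the target is $g^*X'+2g^*(B\times X)+\xi' g^*(B\times B)$, while $p_{B_1}g(w)$ has third slot $g^*X'+2B_1\times gX+\nu(g)^{-1}\xi' B_1\times B_1$. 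The term $2B_1\times gX$ carries no $\xi'$ and hence no $\nu(g)^{-1}$ with which your extra $\nu(g)$ could ``combine''; as written it equals $2\nu(g)g^*(B\times X)$, off by $\nu(g)$. The $\xi'$ term comes out as $\nu(g)\xi'g^*(B\times B)$, again off by $\nu(g)$. In the second slot, $(B_1\times B_1,gX)=\nu(g)^2(B\times B,X)$ against a required $\nu(g)(B\times B,X)$, and $\nu(g)^{-1}\xi'\det(B_1)=\nu(g)^3\xi'\det(B)$ against a required $\nu(g)\xi'\det(B)$. So the single choice $B_1=\nu(g)gB$ is \emph{not} consistent across all four slots under the identities exactly as you (and the paper) state them; the asserted cancellation of $\nu(g)$-powers is the whole content of the lemma and it is precisely the part that is not actually checked.

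The source of the trouble is that the cross-product identity is only correct up to a similitude factor: differentiating $\det(gX)=\nu(g)\det(X)$ and using (\ref{identity}) gives $(gX)\times(gY)=\nu(g)\,g^*(X\times Y)$, equivalently $g(X\times Y)=\nu(g)\,(g^*X)\times(g^*Y)$, not the unnormalized version recalled in Section 3 (the two agree only on $\bold M'$, where $\nu=1$). Relatedly, one should check which normalization of the $\bold M$-action on the $(\xi,\xi')$-coordinates actually preserves the quartic form $Q$, since the exponent of $\nu(g)$ in $B_1$ depends on it. To make your argument complete you must redo the slot-2 and slot-3 bookkeeping with the $\nu$-twisted identity and verify that every term then matches for one and the same $B_1$; for $g\in\bold M'$ everything you wrote is fine, and it is only the powers of $\nu(g)$ on the full similitude group that remain to be pinned down.
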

\begin{proof} By explicit computation, we see that 
$$
g p_B\begin{pmatrix} X\\ \xi\\X'\\ \xi'\end{pmatrix}=\begin{pmatrix} gX+\xi' gB\\ \nu(g) (\xi+(B,X')+(B\times B,X)+\xi' \det(B))\\ g^*X'+2g^*(B\times X)+\xi' g^*(B\times B)\\ \nu(g)^{-1}\xi' \end{pmatrix}.
$$
$$
p_{B_1}g\begin{pmatrix} X\\ \xi\\X'\\ \xi'\end{pmatrix}=\begin{pmatrix} gX+ \nu(g)^{-1}\xi' B_1 \\ \nu(g)\xi+(B_1,g^*X')+(B_1\times B_1,gX)+ \nu(g)^{-1}\xi' \det(B_1)\\
g^*X'+2B_1\times gX+ \nu(g)^{-1}\xi' B_1\times B_1\\ \nu(g)^{-1}\xi' \end{pmatrix}.
$$
By comparing coefficients, we see that $B_1=\nu(g) g B$.
\end{proof}

Denote the element of $V=\bold V(K)$ by 
$$v(x,y,z)=m_{\bar x_1e_{31}}m_{\bar x_2e_{32}}\cdot p_{y_1 e_{13}} p_{y_2 e_{23}}\cdot p_{z},$$ 
where 
\begin{eqnarray*}
&&x=\begin{pmatrix} x_1\\ x_2\end{pmatrix}, \quad y=\begin{pmatrix} y_1\\y_2\end{pmatrix},\quad z=\begin{pmatrix} a& w\\ \bar w&b\end{pmatrix},
\end{eqnarray*}
where $x_1,x_2,y_1,y_2,w\in \frak C_K$ and $a,b\in K$. We identified $z$ with $\begin{pmatrix} z&0\\0&0\end{pmatrix}$ in the definition of $p_z$.
Then by using the above lemma, we can show that
\begin{equation}\label{mult}
v(x,y,z)v(x',y',z')=v(x+x', y+y', z+z'- y ({}^t \bar{x'})-{x'}({}^t \bar y)).
\end{equation}

Now let 
\begin{equation}\label{xyz}
\begin{array}{rl}
X=X(K)=&\{ m_{\bar x_1e_{31}}m_{\bar x_2e_{32}} \in V\ |\ x_1,x_2\in \frak C_K  \},\\
 Y=Y(K)=&\{ p_{y_1 e_{13}} p_{y_2 e_{23}}\in V\ |\ y_1,y_2\in \frak C_K  \},\\
 Z=Z(K)=&\Big\{ p_z\in V \ |\  z\in \frak J_2(K)  \Big\}\simeq \frak J_2(K).
\end{array}
\end{equation}
We identify $X$ (resp. $Y$) with $\frak C^2_K$ by $m_{\bar x_1e_{31}}m_{\bar x_2e_{32}}\mapsto x=\begin{pmatrix} x_1\\ x_2\end{pmatrix}$ 
(resp. by $p_{y_1 e_{13}} p_{y_2 e_{23}}\mapsto y=\begin{pmatrix} y_1\\ y_2\end{pmatrix}$).  
Then we have the decomposition 
\begin{equation}\label{v}
V=\bold V(K)=X\cdot Y\cdot Z.
\end{equation}
We hope that it is clear from the context when $X,Y,Z$ denote the sets, or they denote the elements of $\frak J=\frak J_K$. 

For any $S\in\frak J_2(K)$, define ${\rm tr}_S: Z=\{v(0,0,z)\}\longrightarrow K$, ${\rm tr}_S(v(0,0,z))=\ds\frac{1}{2}(S,z)$. Since $Z$ is the center of $V$, 
${\rm Ker}({\rm tr}_S)$ is a normal subgroup of $V$, and we may consider the quotient $V_0=V/{\rm Ker}({\rm tr}_S)$.

Define the alternating form on $X\oplus Y$ by
$$\langle (x,y), (x',y')\rangle_S={\rm Tr}(S ( x({}^t \bar{y'})+y'({}^t {\bar x})-x' ({}^t \bar{y})-y({}^t {\bar x'}))).
$$
Consider the map $g_S: V\longrightarrow X\oplus Y\oplus K$ by
\begin{equation}\label{alt1}
v(x,y,z)=v(x)v(y)v(z)\longmapsto (x,y, {\rm Tr}(\frac{1}{2}Sz)+{\rm Tr}(\frac S2 (y{}^t {\bar x}+ x {}^t \bar{y}))).
\end{equation}

From (\ref{mult}), we see that $g_S(v(x,y,z)v(x',y',z'))= (x+x',y+y',z'')$ 
where 
$$z''= {\rm Tr}(\frac{1}{2}Sz)+{\rm Tr}(\frac S2 (y{}^t {\bar x}+ x{}^t \bar{y})+{\rm Tr}(\frac{1}{2}Sz')+ 
{\rm Tr}(\frac S2(y'{}^t {\bar x'}+x'{}^t \bar{y'})
+ \frac 12 \langle (x,y), (x',y')\rangle_S.$$
Since ${\rm Ker}(g_S)={\rm Ker}({\rm tr}_S)$, if $\det(S)\not=0$ then we obtain the isomorphism
\begin{equation}\label{heisenberg1}
g_S: V_0=V/{\rm Ker}({\rm tr}_S)\stackrel{\sim}{\lra} X\oplus Y\oplus K.
\end{equation}

Next we compute the action of $\bold H(K)$ on $V=\bold V(K)$: Recall that $\bold H(K)=<p_{be_3}, \iota_{e_3}>\simeq SL_2(K)$ for $b\in K$.
We identify $\gamma=\begin{pmatrix} a&b\\c&d\end{pmatrix}\in SL_2(K)$ with the corresponding element in $\bold H(K)$ under the isomorphism. 
Observe \cite{F} that 
$$e_3\times X=\frac 12 \begin{pmatrix} b&-x&0\\ -\bar x&a&0\\ 0&0&0\end{pmatrix},\quad
e_3\times (e_3\times X)=\frac 14 \begin{pmatrix} a&x&0\\ \bar x&b&0\\ 0&0&0\end{pmatrix}.
$$
Then for $i=1,2$,
\begin{equation}\label{iota}
\iota_{e_3}^{-1}p_{x e_{i3}} \iota_{e_3}=m_{-\bar x e_{3i}},\quad
\iota_{e_3}^{-1}m_{\bar x e_{3i}}\iota_{e_3}=p_{x e_{i3}}.
\end{equation}
For $1\leq i\leq j\leq 2$, $\iota_{e_3}^{-1}p_{x e_{ij}} \iota_{e_3}=p_{x e_{ij}}$.
Hence 
$$p_{be_3}^{-1} v(x,y,z) {p_{be_3}}=v(x,bx+y, z-b x {}^t {\bar x}),\quad \iota_{e_3}^{-1} v(x,y,z) \iota_{e_3}=v(-y,x,z+x {}^t \bar y+y {}^t\bar x).
$$
Since $p_{ce_3}'=\iota_{e_3} p_{-ce_3}\iota_{e_3}^{-1}$, and
$h(a)=\begin{pmatrix} a&0\\0&a^{-1}\end{pmatrix}$ is identified with $p_{ae_3}p_{-a^{-1}e_3}'p_{ae_3} \iota_{e_3}^{-1}$,
we see that
$${p_{ce_3}'}^{-1} v(x,y,z) {p_{ce_3}'}=v(x+cy,y, z-c y {}^t {\bar y}),\quad h(a)^{-1}v(x,y,z)h(a)=v(ax,a^{-1}y,z).
$$
Here $h(a)\in \bold M$, and $\nu(h(a))=a$; More explicitly,
$$h(a)(X,\xi,X',\xi')=(X+(a-1)(e_3,X)e_3, a\xi, X'-(1-a^{-1})(e_3,X')e_3, a^{-1}\xi').
$$
Hence 
\begin{lemma} \label{H}
Let $\gamma=\begin{pmatrix} a&b\\c&d\end{pmatrix}\in \bold H(K)$. Then 
$\gamma^{-1} v(x,y,z) \gamma=v(ax+cy, bx+dy,z')$
where $$z'=z-\frac 12( (ax+cy){}^t\overline{(bx+dy)}+ (bx+dy){}^t\overline{(ax+cy)}-x {}^t\bar y-y {}^t\bar x).$$
\end{lemma}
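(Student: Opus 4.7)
The plan is to reduce to the three types of generators of $\bold H(K) \simeq SL_2(K)$ whose conjugation actions on $V$ have just been written down in the text — the upper unipotent $p_{be_3}$, the lower unipotent $p'_{ce_3}$, and the diagonal $h(a)$. When $a \ne 0$, the Bruhat factorization
$$\gamma = \begin{pmatrix} 1 & 0 \\ c/a & 1 \end{pmatrix} \begin{pmatrix} a & 0 \\ 0 & a^{-1} \end{pmatrix} \begin{pmatrix} 1 & b/a \\ 0 & 1 \end{pmatrix}$$
(whose $(2,2)$ entry equals $d$ thanks to $ad-bc=1$) writes $\gamma$ as such a product, and I would compute $\gamma^{-1} v(x,y,z) \gamma$ by iterating the three tabulated conjugation formulas in succession. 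The $x$ and $y$ components transform in the expected linear way, with one small use of $ad-bc=1$ to recognize $(bc+1)/a = d$ in the $y$-component.

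The real calculation lives in the $z$-component. Writing $X'=ax+cy$ and $Y'=bx+dy$, only the lower- and upper-unipotent steps modify $z$, contributing $-(c/a)\,y\,{}^t\bar y - (b/a)\,X'\,{}^t\bar{X'}$. Since $a,b,c,d$ are central scalars, expanding $X'\,{}^t\bar{X'}$ introduces no nonassociativity issues, and after using $1+bc=ad$ to collect the $y\,{}^t\bar y$ terms one obtains
$$z' = z - ab\,x\,{}^t\bar x - bc\bigl(x\,{}^t\bar y + y\,{}^t\bar x\bigr) - cd\,y\,{}^t\bar y.$$
On the other side, expanding the target expression $-\tfrac12\bigl(X'\,{}^t\bar{Y'}+Y'\,{}^t\bar{X'}-x\,{}^t\bar y - y\,{}^t\bar x\bigr)$ directly produces the same thing; the middle coefficient on $x\,{}^t\bar y + y\,{}^t\bar x$ comes out to $\tfrac12(ad+bc-1)$, which equals $bc$ by the determinant relation.

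For the degenerate stratum $a=0$ I would either conjugate first by $\iota_{e_3}$ (whose action on $V$ is also tabulated) so that the big-cell calculation applies to the residual factor, or simply note that both sides of the claimed identity are regular functions on $SL_2(K)$ and the big cell $\{a\ne 0\}$ is Zariski dense. The only obstacle worth flagging is the bookkeeping of the Cayley-valued cross terms in the $z$-component; the Bruhat structure ensures that the whole calculation is driven by the single scalar identity $ad+bc-1 = 2bc$, and nothing else in the argument is subtle.
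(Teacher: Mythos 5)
Your proposal is correct and follows essentially the same route as the paper: the text preceding the lemma records the conjugation action of $p_{be_3}$, $p'_{ce_3}$, $h(a)$ and $\iota_{e_3}$ on $v(x,y,z)$ and then asserts the general formula, which is exactly the reduction to generators you carry out. Your Bruhat-cell computation (with the verification that the coefficient of $x\,{}^t\bar y+y\,{}^t\bar x$ is $\tfrac12(ad+bc-1)=bc$) simply makes explicit the assembly step that the paper leaves as ``hence,'' and your handling of the $a=0$ stratum via $\iota_{e_3}$ is consistent with the tabulated formulas.
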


\section{Jacobi group in $E_{7,3}$, Weil representation, and theta functions}
\subsection{Jacobi group in $E_{7,3}$}
Let $\A$ be the ring of adeles of $\Q$ and $\A_f$ its finite part. 
Let $\widehat{\mathbb{Z}}$ be the profinite completion of $\mathbb{Z}$. 
For $R=\A,\ \A_f,\ \widehat{\Z},\ \Q,\ \Q_p,\ \Z_p$, $p\leq \infty$, or any field $R$, one can consider $X(R)$, $Y(R)$, $Z(R)$, 
$\bold V(R)$ 
(analogues of (\ref{xyz}) and (\ref{v}))) 
by using $\frak C_R$ and the action of $H(R)\simeq SL_2(R)$ on 
$\bold V(R)$ by using the calculation done in Section \ref{E7}. Note that we may not get the identification 
$H(R)\simeq SL_2(R)$ for an arbitrary ring $R$ since this map is described in terms of the root system.  
(The interested readers should consult the notion of Chevalley basis. (cf. \cite{steinberg}))  

Now we introduce a new coordinate on $\bold V$ by modifying the group actions, and define the Jacobi group in $E_{7,3}$:
For any $x,y\in X(R)$ and $S\in \frak  J_2(R)$ so that $\det(S)\not=0$, 
we define 
\begin{equation}\label{quadratic}
\sigma(x,y):=x{}^t\bar{y}+y{}^t\bar{x},\ \sigma_S(x,y):=(S,\sigma(x,y)) \ {\rm and}\ 
\lambda_S(x,y):=\frac{1}{2}\sigma_S(x,y).
\end{equation}
Clearly $\sigma(x,y)\in  \frak  J_2(R)\simeq Z(R)$. A new coordinate $v_1(x,y,z)$ on $\bold V$ is defined by  
$$v_1(x,y,z):=v(x,y,z-\sigma(x,y)),\ x\in X(R),\ y\in Y(R),\ {\rm and}\ z\in Z(R).
$$
Then by (\ref{mult}), one has 
\begin{equation}{\label{mult2}}
v_1(x,y,z)v_1(x',y',z')=v_1(x+x',y+y',z+z'+\sigma(x,y')-\sigma(x',y)).
\end{equation}
The alternating pairing on $X(R)\oplus Y(R)$ is modified as   
\begin{equation}\label{alt}
\langle(x,y),(x',y')\rangle_S:=2(\lambda_S(x,y')-\lambda_S(x',y))=\sigma_S(x,y')-\sigma_S(x',y),
\end{equation}
and $X(R)\oplus Y(R)\oplus R$ has the Heisenberg structure defined by 
$$(x,y,a)\ast(x',y',b)=(x+x',y+y',a+b+\frac{1}{2}\langle(x,y),(x',y')\rangle_S).$$ 
For any  $S\in \frak J_2(\Z)_{+}$, the Heisenberg structure on $V$ is modified by passing to $g_S$ 
(see (\ref{heisenberg1}) for this map) as 
\begin{equation}\label{heisenberg2}
g_{1,S}:\bold V(R)\lra X(R)\oplus Y(R)\oplus R,\ v_1(x,y,z)\mapsto (x,y,\frac{1}{2}(S,z)).
\end{equation}
Noting $\frac{1}{2}(\sigma_S(x,y')-\sigma_S(x',y))=\lambda_S(x,y')-\lambda_S(x',y)$, 
it is easy to see that 
$$g_{1,S}(v_1(x,y,z)v_1(x',y',z'))=g_{1,S}(v_1(x,y,z))\ast g_{1,S}(v_1(x',y',z')),
$$ 
or equivalently, $g_{1,S}$ preserves the Heisenberg structures in both sides. 

The action of $\bold H(R)$ on $\bold V(R)$ with the new coordinates  now turn to be much simpler by Lemma \ref{H}:
\begin{equation}\label{h2}
\gamma^{-1} v_1(x,y,z) \gamma=v_1(ax+cy, bx+dy,z) \ 
{\rm for}\ \gamma=\begin{pmatrix} a&b\\c&d\end{pmatrix}\in \bold H(R). 
\end{equation}

By using this action, we define the Jacobi group in $\bold G(R)$ by 
\begin{equation}\label{jacobi}
J(R):=\bold V(R)\rtimes \bold H(R) 
\end{equation}
with the coordinates $(v_1(x,y,z),h(\gamma)),\ \gamma=\begin{pmatrix} a&b\\c&d\end{pmatrix}$ under the identification 
$\bold H(R)\simeq {\rm SL}_2(R)$. 

\subsection{Weil representation and theta functions}\label{weil-rep}
In this section we shall recall Weil representation and theta series (cf. Section 1-3 of \cite{Ik3}). 

For each place $p\not=\infty$, put 
$$e_p(x)=\exp(-2\pi\sqrt{-1}\cdot{\rm Frac}(x))$$ for $x\in \Q_p$ where 
${\rm Frac}(x)$ stands for the fractional part of $x$. For $p=\infty$, put $\bold{e}(x)=e_\infty(x):=\exp(2\pi \sqrt{-1}x)$ for 
$x\in \R$. 
Fix a non-trivial additive character $\psi:\Q\ba\A\lra \C^\times$ and decompose it as the restricted tensor product 
$\psi=\ds\otimes'_{p\le \infty}\psi_p$. As a standard example, one can take $\psi^{{\rm st}}:=\ds\otimes'_{p\le \infty}e_p(\ast)$ which 
will be used later when we translate the adelic setting into the classical setting and vise versa.  

Fix $S\in \frak J_2(\Z)_{+}$. 
We denote by $h(a)$ (resp. $n(b)$) the element of $\bold H(\Q_p)$ corresponding to 
$\left(\begin{array}{cc}
a & 0 \\
0 & a^{-1}
\end{array}\right),\ a\in \Q^\times_p$ (resp. $\left(\begin{array}{cc}
1 & b  \\
0 & 1
\end{array}\right),\ b\in \Q_p$) under $\bold H(\Q_p)\simeq {\rm SL}_2(\Q_p)$. Note that 
$\left(\begin{array}{cc}
0 & 1 \\
-1 & 0
\end{array}\right)$ corresponds to $\iota_{e_3}\in \bold H(\Q_p)$.  

For each place $p\le \infty$, the Schr\"odinger model $\omega_{S,p}$ on $\bold V(\Q_p)$ 
with the central character $\psi_{p,S}:z\mapsto \psi_p(\frac{1}{2}(S,z)),\ z\in Z(\Q_p)$ realized on the Schwartz space $\mathcal{S}(X(\Q_p))$ is given by 
$$\omega_{S,p}(v_1(x,y,z))\varphi(t)=\varphi(t+x)\psi_p(\frac{1}{2}(S,z)+2\lambda_S(t,y)+\lambda_S(x,y))$$
for $\varphi\in \mathcal{S}(X(\Q_p))$ and $v_1(x,y,z)\in \bold V(\Q_p)$. 
Noting the multiplication law (\ref{mult2}), it is easy to check 
$$\omega_{S,p}(v_1(x,y,z)v_1(x',y',z'))\varphi(t)=\omega_{S,p}(v_1(x,y,z))(\omega_{S,p}(v_1(x',y',z')\varphi(t)).
$$ 
By the Stone-von Neumann theorem, $\omega_{S,p}$ is a unique irreducible unitary representation on which 
$Z(\Q_p)$ acts by $\psi_{p,S}$. 

Recall the conjugate action $\bold H(\Q_p)$ on $\bold V(\Q_p)$ (see (\ref{h2})) and the alternating pairing (\ref{alt}). This induces a homomorphism 
\begin{equation}\label{embed}
\bold H(\Q_p)\hookrightarrow {\rm Sp}_{V/Z}(\Q_p):={\rm Sp}(\bold V(\Q_p)/Z(\Q_p),\langle \ast,\ast\rangle_S).
\end{equation}
Let $\widetilde{{\rm Sp}}_{V/Z}(\Q_p)$ be the metaplectic covering of ${\rm Sp}_{V/Z}(\Q_p)$. This covering does not 
split, but by \cite{kudla-split}, one has  
a splitting  $H(\Q_p)\hookrightarrow  \widetilde{{\rm Sp}}_{V/Z}(\Q_p)$ so that  
the map (\ref{embed}) factors through it via the covering map.  
The Schr\"odinger model $\omega_{S,p}$ extends to the Weil representation of $\bold V(\Q_p)\rtimes \widetilde{{\rm Sp}}_{V/Z}(\Q_p)$ 
acting on $\mathcal{S}(X(\Q_p))$. 
Then the pullback to $\bold H(\Q_p)$ of this representation is given by 
$$\begin{array}{c}
\omega_{S,p}(h(a))\varphi(t)=\chi_S(a)|a|^8_p\varphi(ta),\ \chi_S(a):=\langle \disc(\lambda_S),a \rangle_{\Q_p}\\ 
\omega_{S,p}(n(b))\varphi(t)=\psi_{p}(\lambda_S(t,t)b)\varphi(t) \\
\omega_{S,p}(\iota_{e_3})\varphi(t)=(F_S\varphi)(-t),
\end{array},
$$
where $\langle \ast,\ast \rangle_{\Q_p}$ stands for the Hilbert symbol on $\Q^\times_p\times \Q^\times_p$ and  
$$(F_S\varphi)(t)=\ds\int_{X(\Q_p)} \varphi(x)\psi_p(\lambda_S(t,x))dx,
$$ 
where $dx$ means the Haar measure on $X(\Q_p)$ which is self-dual with respect to the Fourier transform $F_S$. 
Note that the index $8$ of $|a|^8_p$ in the first formula comes from the fact that $\frac{1}{2}{\rm dim}_{\Q_p}X(\Q_p)=8$ and 
we also use $\nu(h(a))=a$. Furthermore, we always have $\chi_S(a)=1$ since $\disc(\lambda_S)$ is a square 
by Lemma \ref{det} in the appendix. 

The global Weil representation of $\omega_S$ of $J(\A)$ acting on the Schwartz space $\mathcal{S}(X(\A))$ is given by the restricted tensor product of $\omega_{S,p}$. 
In our setting, $\omega_S$ is much simpler than that of the case $Sp_{2n}$ (compare to Section 1 of \cite{Ik1}): 
for $\varphi\in \mathcal{S}(X(\A))$,  
$$\begin{array}{c}
\omega_{S}(h(a))\varphi(t)=|a|^8_{\A}\varphi(ta),\quad
\omega_{S}(n(b))\varphi(t)=\psi(\lambda_S(t,t)b)\varphi(t),\quad
\omega_{S}(\iota_{e_3})\varphi(t)=(F_S\varphi)(-t),
\end{array}
$$
where $(F_S\varphi)(t)=\ds\int_{X(\A)} \varphi(x)\psi(\lambda_S(t,x))dx$. 

For each $\varphi\in \mathcal{S}(X(\A))$, the theta function $\Theta^{\psi_S}(v_1(x,y,z)h;\varphi)$ on $\bold V(\A)$ is given by 
$$\begin{array}{rl}
\Theta^{\psi_S}(v_1(x,y,z)h;\varphi)&:=\ds\sum_{\xi\in X(\Q)}\omega_S(v_1(x,y,z)h)\varphi(\xi)\\
&=\ds\sum_{\xi\in X(\Q)}\omega_S(h)\varphi(\xi+x)\psi(\frac{1}{2}(S,z)+2\lambda_S(\xi,y)+\lambda_S(x,y))
\end{array}.
$$
It is easy to see that this function is invariant under the action of $J(\Q)$. 
By the equivalence of the Schr\"odinger model and the lattice model (cf. \cite{takase}), for any $\varphi \in \mathcal{S}(X(\A))$ one has 
the Poisson summation formula which will be used later:
\begin{equation}\label{poisson}
\Theta^{\psi_S}(v_1(x,y,z)(\iota_{e_3} h);\varphi)=\Theta^{\psi_S}(v_1(x,y,z)h;\varphi).
\end{equation}
To end this section, we discuss the relation between the adelic theta function and the classical theta function. 
For any $\varphi' \in \mathcal{S}(X(\A_f))$, we extend this function to an element $\varphi$ of 
$\mathcal{S}(X(\A))$ by

$$\varphi((x_p)_{p\le \infty}):=\varphi_\infty(x_\infty)\varphi'((x_p)_{p< \infty}),\ 
\varphi_\infty(x_\infty)=e^{-2\pi\cdot\sigma_S(x_\infty,x_\infty)}.
$$
Put $\mathfrak{X}:=X(\R)\otimes_\R\C\simeq \C^{\oplus 16}$ and let us extend the quadratic form $\sigma_S$ linearly to that on 
$\mathfrak{X}$. 
For each $\varphi \in \mathcal{S}(X(\A_f))$, 
the classical theta function on $\mathfrak{D}:=\mathbb{H}\times \mathfrak{X}$ is given by 
$$\theta^S_\varphi(\tau,u):=\ds\sum_{\xi\in X(\Q)}\varphi(\xi)\bold{e}(\sigma_S(\xi,\xi)\tau+2\sigma_S(\xi,u)).$$
The group $J(\R)$ acts on $\mathfrak{D}$ by 
$$\beta(\tau,u):=\left(\gamma \tau,\frac{u}{c\tau+d}+x(\gamma \tau)+y \right),
$$
where $\beta=v_1(x,y,z)h$ with $v_1(x,y,z)\in \bold V(\R)$ and $h=h(\gamma)\in \bold H(\R)$ corresponds to 
$\left(\begin{array}{cc}
a & b \\
c & d
\end{array}\right)\in {\rm SL}_2(\R)$. 
Here $\gamma \tau=\ds\frac{a\tau+b}{c\tau+d}$ and put $j(\gamma,\tau):=c\tau+d$ for simplicity. 
For each positive even integer $k$, the automorphy factor on $J(\R)\times \mathfrak{D}$ is defined by 
$$j_{k,S}(\beta,(\tau,u)):=j(\gamma,\tau)^{k}
\bold{e}(-(S,z)+\frac{c}{j(\gamma,\tau)}\sigma_S(u,u)-\frac{2\sigma_S(x,u)}{j(\gamma,u)}-
\sigma_S(x,x)(\gamma\tau)-\sigma_S(x,y)),
$$
for $\beta=v_1(x,y,z)h,\ h=h(\gamma)$ as above. 
After lengthy and painful calculation, one can check the cocycle relation: 
$$j_{k,S}(\beta\beta',(\tau,u))=j_{k,S}(\beta,\beta'(\tau,u))j_{k,S}(\beta',(\tau,u)).$$
For each function $f:\mathfrak{D}\lra \C$ and $\beta\in \bold V(\R)$, 
we define the ``slash" operator $f|_{k,S}[\beta]:\mathfrak{D}\lra \C$ by 
$$f|_{k,S}[\beta](\tau,u):=j_{k,S}(\beta,(\tau,u))^{-1}f(\beta(\tau,u)).$$
Then the following lemma is easy to deduce from the definition:

\begin{lemma}\label{adelic-to-classical} Keep the notation above. 
For each $\varphi \in \mathcal{S}(X(\A_f))$ and $h(\gamma)\in H(\R),\ \gamma\in {\rm SL}_2(\R)$, 

\begin{enumerate}
\item $\Theta^{(\psi^{{\rm st}}_S)^2}(\beta;\varphi')=\theta^S_\varphi|_{8,S}[\beta](\sqrt{-1},0)$ 
for any $\beta \in J(\R)$, 

\item $\theta^S_{\omega_S(\gamma^{-1})\varphi}(\tau,u)=j(\gamma,\tau)^{-8}\theta^S_\varphi(\gamma(\tau,u))$. 
\end{enumerate}
\end{lemma}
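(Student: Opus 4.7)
The plan is to prove (1) by direct unfolding, reducing it to a transformation law for the Gaussian under the archimedean Weil representation, and then to derive (2) from (1) using the left-$J(\Q)$-invariance of the adelic theta.

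For (1), write $\beta = v_1(x,y,z)h(\gamma)$ for some $\gamma\in {\rm SL}_2(\R)$. Using the factorization $\varphi = \varphi_\infty\cdot\varphi'$ together with the explicit Schr\"odinger-model formulas for $\omega_{S,\infty}$ on the generators of $J(\R)$, the left-hand side unfolds as
$$\sum_{\xi\in X(\Q)}[\omega_{S,\infty}(h(\gamma))\varphi_\infty](\xi+x)\,\varphi'(\xi)\,\bold{e}\bigl((S,z)+2\sigma_S(\xi,y)+\sigma_S(x,y)\bigr).$$
For the right-hand side, compute $\beta(\sqrt{-1},0) = (\gamma\cdot\sqrt{-1},\,x(\gamma\cdot\sqrt{-1})+y)$, substitute into the definitions of $j_{8,S}$ and $\theta^S_\varphi$, and rearrange to obtain
$$j(\gamma,\sqrt{-1})^{-8}\,\bold{e}\bigl((S,z)+\sigma_S(x,y)\bigr)\sum_{\xi\in X(\Q)}\varphi'(\xi)\,\bold{e}\bigl(\sigma_S(\xi+x,\xi+x)(\gamma\cdot\sqrt{-1})+2\sigma_S(\xi,y)\bigr).$$
Comparing the two sums termwise in $\xi$ shows that (1) is equivalent to the single identity
$$[\omega_{S,\infty}(h(\gamma))\varphi_\infty](t) = j(\gamma,\sqrt{-1})^{-8}\,\bold{e}\bigl(\sigma_S(t,t)(\gamma\cdot\sqrt{-1})\bigr)\qquad (t\in X(\R)),$$
which is the classical transformation law for the Gaussian under the Weil representation of ${\rm SL}_2(\R)$.

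This Gaussian identity is checked on a set of generators of ${\rm SL}_2(\R)$. It is immediate for $n(b)$ and $h(a)$ from the explicit formulas for $\omega_{S,\infty}$ together with $n(b)\cdot\sqrt{-1}=\sqrt{-1}+b$, $h(a)\cdot\sqrt{-1}=a^2\sqrt{-1}$, the corresponding values of $j(\gamma,\sqrt{-1})$, and $|a|_\infty^8=a^8$ for $a\in\R^\times$ (since $8$ is even). The main obstacle is the case $\gamma=\iota_{e_3}$, which requires computing the Fourier transform of $\varphi_\infty$ against $\bold{e}(\sigma_S(t,\cdot))$ with respect to the self-dual Haar measure on $X(\R)$; here the triviality of the quadratic character $\chi_S$ (guaranteed by Lemma \ref{det} in the Appendix) and the self-duality of the measure combine to produce the correct normalization matching $j(\iota_{e_3},\sqrt{-1})^{-8}=1$ and $\iota_{e_3}\cdot\sqrt{-1}=\sqrt{-1}$. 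The extension to arbitrary $\gamma$ then follows from the Iwasawa decomposition together with the cocycle relation for $j_{8,S}$ recorded above.

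For (2), invoke the left-$J(\Q)$-invariance of $\Theta^{(\psi^{\rm st}_S)^2}$: for $\gamma\in{\rm SL}_2(\Q)=\bold H(\Q)\subset J(\Q)$ embedded diagonally into $J(\A)$ as $\gamma_\infty\cdot\gamma_f$, invariance yields
$$\Theta^{(\psi^{\rm st}_S)^2}(\gamma_\infty\beta;\varphi')=\Theta^{(\psi^{\rm st}_S)^2}(\beta;\omega_{S,f}(\gamma_f^{-1})\varphi')$$
for all $\beta\in J(\R)$. Applying (1) to both sides then gives the identity (2) at the point $\beta(\sqrt{-1},0)\in\mathfrak{D}$, and since $J(\R)$ acts transitively on $\mathfrak{D}$, varying $\beta$ produces (2) at every $(\tau,u)\in\mathfrak{D}$; alternatively, (2) can be checked directly on the generators $n(b),h(a),\iota_{e_3}$ in complete parallel with the verification of the Gaussian identity in part (1).
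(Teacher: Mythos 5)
The paper gives no proof of this lemma beyond the remark that it is ``easy to deduce from the definition,'' so your direct unfolding is exactly the intended argument, and the core of it is correct: the termwise comparison reducing (1) to the single Gaussian identity $[\omega_{S,\infty}(h(\gamma))\varphi_\infty](t)=j(\gamma,\sqrt{-1})^{-8}\,\bold{e}(\sigma_S(t,t)(\gamma\cdot\sqrt{-1}))$, and its verification on $n(b)$, $h(a)$, $\iota_{e_3}$ (with $\chi_S\equiv 1$, the self-dual measure, and the trivial Weil index of the positive definite rank-$16$ form $\sigma_S$ handling the Fourier-transform case) all check out. One point should be made explicit: you cannot compose the identity at the single base point $\sqrt{-1}$ along a product of generators, since the cocycle relation moves the base point. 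What composes is the family version $\omega_{S,\infty}(\gamma)\varphi_\tau=j(\gamma,\tau)^{-8}\varphi_{\gamma\tau}$, where $\varphi_\tau(t)=\bold{e}(\sigma_S(t,t)\tau)$, verified on the generators for every $\tau\in\mathbb{H}$ (the computations are word-for-word the same) and then iterated; your appeal to ``Iwasawa decomposition together with the cocycle relation'' silently uses this.

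For (2), your deduction from (1) and the $J(\Q)$-invariance is the right mechanism (and you correctly note that $\gamma$ must lie in ${\rm SL}_2(\Q)$ for $\omega_S(\gamma^{-1})$ to act on $\varphi\in\mathcal{S}(X(\A_f))$ through the finite places). But carried out to the end, the computation yields
$$\theta^S_{\omega_S(\gamma^{-1})\varphi}(\tau,u)=j_{8,S}\bigl(h(\gamma),(\tau,u)\bigr)^{-1}\,\theta^S_\varphi(\gamma(\tau,u))
=j(\gamma,\tau)^{-8}\,\bold{e}\Bigl(-\frac{c\,\sigma_S(u,u)}{c\tau+d}\Bigr)\,\theta^S_\varphi(\gamma(\tau,u)),$$
that is, $\theta^S_{\omega_S(\gamma^{-1})\varphi}=\theta^S_\varphi|_{8,S}[h(\gamma)]$. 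The index-dependent exponential is genuinely present (it is the classical Jacobi theta transformation and does not cancel), so the printed statement of (2), with the bare factor $j(\gamma,\tau)^{-8}$, is literally correct only at $u=0$ or when $c=0$. This is an imprecision in the statement rather than a flaw in your argument --- the version with the full automorphy factor is the one consistent with (\ref{inv2}) later in the paper --- but your write-up should say that this is the identity your computation produces, rather than asserting that it ``gives the identity (2)'' as printed.
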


\begin{lemma}\label{some-theta}
Keep the notation above. 
Let $\xi$ be an element of $X(\Q)$ so that $\sigma_S(\xi,x)\in \Z$ for all $x\in X(\frak o)$ and $\varphi_\xi$ be 
the characteristic function of $\xi+X(\frak o)$. 
Then $$\Theta^{(\psi^{{\rm st}}_S)^2}(v_1(x,y,z)h;\varphi_\xi)=
\Theta^{(\psi^{{\rm st}}_S)^2}(v_1(x_\infty,y_\infty,z_\infty)h;\varphi_\xi).$$
\end{lemma}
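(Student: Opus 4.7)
The plan is to exploit the tensor product factorization $\omega_S = \omega_{S,\infty} \otimes \omega_{S,f}$ of the Weil representation together with the corresponding factorization $\varphi_\xi = \varphi_\infty \otimes \varphi_{\xi,f}$ of the Schwartz function. Writing $v_1(x,y,z) = v_1(x_f, y_f, z_f) \cdot v_1(x_\infty, y_\infty, z_\infty)$ as a product of its finite-adelic and archimedean components, the adelic theta sum becomes
\[
\Theta^{(\psi^{\rm st}_S)^2}(v_1(x,y,z) h;\varphi_\xi) = \sum_{\eta \in X(\Q)} \bigl(\omega_{S,\infty}(v_1(x_\infty, y_\infty, z_\infty) h) \varphi_\infty\bigr)(\eta) \cdot \bigl(\omega_{S,f}(v_1(x_f, y_f, z_f)) \varphi_{\xi,f}\bigr)(\eta),
\]
so the task reduces to showing that the finite-adelic factor acts trivially on $\varphi_{\xi,f}$.

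First I would invoke strong approximation for the unipotent $\Q$-group $\bold V$, namely $\bold V(\A_f) = \bold V(\Q) \bold V(\widehat{\frak o})$. Writing $v_1(x_f, y_f, z_f) = j \cdot k$ with $j \in \bold V(\Q)$ and $k \in \bold V(\widehat{\frak o})$, the $J(\Q)$-invariance of the adelic theta function absorbs $j$ on the left and reduces matters to the case $v_1(x_f, y_f, z_f) \in \bold V(\widehat{\frak o})$. The resulting modification of the archimedean argument by the diagonal image of $j$ is, after absorbing $\bold V(\frak o) = \bold V(\Q) \cap \bold V(\widehat{\frak o})$, harmless thanks to the automorphy properties of the classical theta function encoded in Lemma~\ref{adelic-to-classical}.

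Next I would apply the explicit Weil-representation formula. The convention $(\psi^{\rm st}_S)^2$ effectively doubles the Jacobi index $S$, so the phase becomes $\psi_f^{\rm st}\bigl((S, z_f) + 2\sigma_S(t, y_f) + \sigma_S(x_f, y_f)\bigr)$. For $v_1(x_f, y_f, z_f) \in \bold V(\widehat{\frak o})$, the shift $t \mapsto t + x_f$ preserves the support coset $\xi + X(\widehat{\frak o})$. For $t = \xi + \alpha$ with $\alpha \in X(\widehat{\frak o})$, each summand in the phase lies in $\widehat{\Z}$: the terms $(S, z_f)$, $\sigma_S(x_f, y_f)$, and $\sigma_S(\alpha, y_f)$ are integral by the integrality of $S \in \frak J_2(\Z)_+$ and of the lattice $X(\widehat{\frak o})$, while $\sigma_S(\xi, y_f) \in \widehat{\Z}$ comes from the hypothesis $\sigma_S(\xi, X(\frak o)) \subset \Z$ applied locally. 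Since $\psi_f^{\rm st}$ is trivial on $\widehat{\Z}$, the phase equals $1$, yielding $\omega_{S,f}(v_f)\varphi_{\xi,f} = \varphi_{\xi,f}$ as required.

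The main obstacle will be the integrality bookkeeping at $p = 2$: the standard Weil-representation formula involves $\tfrac12 (S,z)$ and $\lambda_S = \tfrac12 \sigma_S$, which a priori produce $\tfrac12$-denominators. The choice of $(\psi^{\rm st}_S)^2$ rather than $\psi^{\rm st}_S$ is precisely what removes these denominators, so this point must be made explicit. A secondary verification, of the type relegated to the appendix, is the integrality of $\sigma_S$ on $X(\widehat{\frak o}) \times Y(\widehat{\frak o})$, which reduces to noting that $\sigma(x, y) = x {}^t\bar{y} + y {}^t\bar{x}$ lies in $\frak J_2(\frak o)$ when $x, y \in X(\frak o)$ and pairing this against $S \in \frak J_2(\Z)_+$.
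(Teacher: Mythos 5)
Your proposal is correct and follows essentially the same route as the paper's own (much terser) proof: decompose the finite part via strong approximation as $j_1\cdot v_1'$ with $j_1\in J(\Q)$ and $v_1'\in\bold V(\widehat{\Z})$, absorb $j_1$ by the left $J(\Q)$-invariance of the theta function, and observe that on the support coset $\xi+X(\frak o)$ the phase $\psi_f^{\rm st}\bigl((S,z')+2\sigma_S(a,y')+\sigma_S(x',y')\bigr)$ is trivial by integrality together with the hypothesis $\sigma_S(\xi,X(\frak o))\subset\Z$. Your extra remarks on the shift preserving the coset and on $(\psi^{\rm st}_S)^2$ clearing the $\tfrac12$-denominators are exactly the points the paper leaves implicit.
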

\begin{proof}One can decompose 
any element $v_{1,f}\in J(\A_f)$ as $v_{1,f}=j_{1}\cdot v'_1$ so that $j_{1}\in J(\Q)$ and $v'_{1}=v_1(x',y',z')\in 
\bold V(\widehat{\Z})$. 
Since the sum defining this theta function runs over $X(\Q)\cap (\xi+X(\frak o))$, we see that 
$\psi^{{\rm st}}_S((S,z')+2\sigma_S(a,y')+\sigma_S(x',y'))=1$ for any $a\in X(\Q)\cap (\xi+X(\frak o))$. 
Then the claim follows from the left invariance of the theta function under $J(\Q)$.  
\end{proof}

For any $(\tau,u)\in \mathfrak{D}$, there exist elements $v_1\in \bold V(\R)$ and $g_\infty\in H(\R)$ such that 
$v_1g_\infty (\sqrt{-1},0)=(\tau,u)$ since $1$ and $\tau$ are independent over $\R$. 
From this with Lemma \ref{adelic-to-classical}-(1), we make a bridge between the adelic theta functions and 
the classical theta functions which will be focused in the next section. 

\section{Modular forms on the exceptional domain and Jacobi forms}\label{modular}

We review the definition of modular forms on the exceptional domain $\frak T$ in \cite{B}, and define Jacobi forms for our Jacobi group and study their basic properties. 

\subsection{Modular forms on the exceptional domain}
Let $\Gamma=\bold G(\Z)$ be the arithmetic subgroup of $\bold G(\Q)$ as in \cite{B},
defined by $\bold G(\Z)=\{g\in \bold G(\R) : g\bold W_{\frak o}=\bold W_{\frak o}\}$, where 
$\bold W_{\frak o}=\frak J(\Z)\oplus \Z e\oplus\frak J(\Z)\oplus \Z e'$, and $e=(0,1,0,0)$ and $e'=(0,0,0,1)$. 

\begin{lemma} The arithmetic group $\Gamma$ is generated by $\bold N(\Z)$ and $\iota$.
\end{lemma}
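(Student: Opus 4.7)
The plan is to argue by reduction theory on $\bold W_{\frak o}$. Set $\Gamma' := \langle \bold N(\Z), \iota\rangle \subseteq \Gamma$; the goal is $\Gamma' = \Gamma$. First, since $p_B' = \iota p_{-B}\iota^{-1}$, the opposite unipotent radical $\overline{\bold N}(\Z)$ is already contained in $\Gamma'$. Moreover, each Weyl-type element $\iota_{e_i} = p_{e_i} p_{-e_i}' p_{e_i}$ lies in $\Gamma'$, being a product of elements of $\bold N(\Z)$ and $\overline{\bold N}(\Z)$. Conjugating $\bold N(\Z)$ by these $\iota_{e_i}$'s and by $\iota$ itself yields the root subgroups $\bold U_{\pm\alpha}(\Z)$ for every long positive root $\alpha$, together with many $m_{\bar x e_{ij}}$-type elements via the identities $\iota_{e_3}^{-1}p_{xe_{i3}}\iota_{e_3}=m_{-\bar x e_{3i}}$.

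The main step is a descent on the base point $e' = (0, 0, 0, 1) \in \bold W_{\frak o}$. Given any $g \in \Gamma$, set $w := g \cdot e' = (X, \xi, X', \xi')$, a primitive vector in $\bold W_{\frak o}$ with $Q(w) = 0$ (the orbit lies on the isotropic cone cut out by $Q$). Using the explicit formula for the $p_B$-action (which alters $X$, $\xi$ and $X'$ in a controlled way depending on $\xi'$ and $B \in \frak J(\Z)$) and the swap $\iota \cdot (X, \xi, X', \xi') = (-X', -\xi', X, \xi)$, I would run a Euclidean-style reduction: alternately apply $p_B$ with $B \in \frak J(\Z)$ to shrink $\xi'$ modulo $\xi$ (or the relevant Cayley entry of $X'$ modulo an entry of $X$), then apply $\iota$ to swap roles, until all non-$\xi'$ coordinates vanish. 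At that point $w$ coincides with $e'$ and we have produced $\gamma_1 \in \Gamma'$ with $\gamma_1 g \cdot e' = e'$. This mirrors the classical reduction of a primitive symplectic vector to a standard basis vector in the $Sp_{2n}(\Z)$ proof, and crucially uses that the integral Cayley numbers $\frak o$ form a Euclidean order in $\frak C_\Q$ (as used by Baily in \cite{B}).

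The reduction places $\gamma_1 g$ in $\mathrm{Stab}_\Gamma(e') = \bold P(\Z) = \bold M(\Z) \bold N(\Z)$. Since $\bold N(\Z) \subseteq \Gamma'$, it remains to show $\bold M(\Z) \subseteq \Gamma'$, i.e.\ that every element of $\bold M(\Z)$ is already a word in $\bold N(\Z)$ and $\iota$. For this I would iterate the descent one rank lower: $\bold M(\Z)$ acts on $\frak J(\Z)$ preserving $\det$, and the $E_6$-type root subgroups $m_{\bar x e_{ij}}$ for $x \in \frak o$ together with the residual Weyl representatives all lie in $\Gamma'$ by the identities of the previous paragraph. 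A standard generation statement for the type-$GE_6$ group $\bold M$ (preserving the cubic form on $\frak J(\Z)$) then expresses every element of $\bold M(\Z)$ as a product of these generators, completing the argument that $\gamma_1 g \in \Gamma'$ and hence $g \in \Gamma'$.

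The main obstacle is making the Euclidean descent rigorous. Because of the non-commutative, non-associative arithmetic of $\frak o$ and the intricate quartic form $Q$, one must control the combined effect of several $p_B$-translations and $\iota$-conjugates to ensure the size of the coordinates of $w$ strictly decreases and the algorithm terminates. This is precisely the point at which Baily's integral theory of $\frak o$ in \cite{B} (division-with-remainder in $\frak o$, and the explicit description of primitive vectors in $\bold W_{\frak o}$) is invoked.
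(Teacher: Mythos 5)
Your proposal is not wrong in spirit, but it is not a proof: it amounts to an attempt to re-derive from scratch the theorem of Baily that the paper's argument rests on, and every genuinely hard step is deferred. The paper's own proof is a two-line reduction to \cite{B}, Theorem 5.2, which states that $\Gamma$ is generated by $\bold N(\Z)$ and $\overline{\bold N}(\Z)$; since $\overline{\bold N}=\iota^{-1}\bold N\iota$ and $\iota\in\Gamma$, the lemma is immediate. Your Euclidean descent on $\bold W_{\frak o}$, followed by a generation statement for $\bold M(\Z)$, is essentially the content of Baily's theorem itself. Neither piece is carried out: the termination of the descent (which you correctly flag as the main obstacle) is exactly where the work lies, and the ``standard generation statement'' for the integral points of the type-$GE_6$ group preserving $\det$ on $\frak J(\Z)$ over the Coxeter order $\frak o$ is not standard --- it is a nontrivial theorem of the same depth as the one you are trying to prove.

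There is also a concrete error in the setup of the descent. You base it at $e'=(0,0,0,1)$ and assert $\mathrm{Stab}_\Gamma(e')=\bold M(\Z)\bold N(\Z)$, but $\bold N$ does not fix $e'$: from the displayed formula for $p_B$ one gets $p_B e'=(B,\det(B),B\times B,1)$. The vector fixed by $\bold N$ and scaled by $\bold M$ is $e=(0,1,0,0)$, so the reduction must target $e$, and even then identifying $\mathrm{Stab}_\Gamma(\Q e)$ with $\bold M(\Z)\bold N(\Z)$ requires an argument. If you want a self-contained proof you would need to supply the division-with-remainder analysis in $\frak o$ and the $E_6$ generation result in full; otherwise the efficient route is to quote Baily's Theorem 5.2 directly, as the paper does, and observe that $\overline{\bold N}(\Z)=\iota^{-1}\bold N(\Z)\iota$ places $\overline{\bold N}(\Z)$ inside $\langle\bold N(\Z),\iota\rangle$.
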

\begin{proof} $\Gamma$ is generated by $\bold N(\Z)$ and $\overline{\bold N}(\Z)$ (\cite{B}, Theorem 5.2), where $\overline{\bold N}$ is the opposite unipotent subgroup of $\bold N$. Since $\overline{\bold N}=\iota^{-1}\bold N\iota$, the result follows.
\end{proof}

\begin{lemma}\label{generator} The arithmetic group $\Gamma$ is generated by $\bold N(\Z)$, $\bold M'(\Z)$ and $\bold H(\Z)$, 
hence by $\bold N(\Z)$, $\bold M'(\Z)$ and $\iota_{e_3}$.
\end{lemma}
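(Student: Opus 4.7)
The plan is to invoke the preceding lemma, which states $\Gamma=\langle \bold N(\Z),\iota\rangle$ with $\iota=\iota_{e_1}\iota_{e_2}\iota_{e_3}$, and then to show that the three factors $\iota_{e_i}$ all lie in $\langle \bold N(\Z),\bold M'(\Z),\bold H(\Z)\rangle$. Since $\iota_{e_3}\in\bold H(\Z)$ by the construction of $\bold H$, it will be enough to realize $\iota_{e_1}$ and $\iota_{e_2}$ as conjugates of $\iota_{e_3}$ by suitable elements of $\bold M'(\Z)$.

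First I would exhibit a ``permutation'' element $\tau\in\bold M'(\Z)$ implementing the cyclic permutation $e_{11}\mapsto e_{22}\mapsto e_{33}\mapsto e_{11}$ on the primitive idempotents of $\frak J$, extended to off-diagonal entries by permuting positions and composing with the Cayley conjugation wherever needed to preserve the Hermitian form. The non-trivial check is that $\tau$ preserves $\det$; with $X=(x_{ij})$ written as in (\ref{x}), everything but the cross-term $\tr((xz)\bar y)$ is manifestly invariant, and that term is handled using the weak associativity $\tr((ab)c)=\tr(a(bc))$, the cyclicity of $\tr$, and the identity $\tr(\bar a b)=\tr(a\bar b)$. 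Thus $\nu(\tau)=1$; since $\tau$ stabilizes the lattice $\frak J(\Z)$, it belongs to $\bold M'(\Z)$.

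Next I would conjugate. By Lemma \ref{parab}, for $g\in\bold M$ one has $g p_B g^{-1}=p_{\nu(g)gB}$, so conjugation by $\tau$ (which has $\nu(\tau)=1$) carries $p_{ae_3}\mapsto p_{ae_1}$, $p'_{-e_3}\mapsto p'_{-e_1}$, and hence
$$\tau\,\iota_{e_3}\,\tau^{-1}=\tau\, p_{e_3}p'_{-e_3}p_{e_3}\,\tau^{-1}=p_{e_1}p'_{-e_1}p_{e_1}=\iota_{e_1}.$$
Similarly $\tau^2\iota_{e_3}\tau^{-2}=\iota_{e_2}$. Therefore $\iota=\iota_{e_1}\iota_{e_2}\iota_{e_3}\in\langle\bold N(\Z),\bold M'(\Z),\bold H(\Z)\rangle$, and combined with the previous lemma this gives the first assertion.

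For the ``hence'' part, recall that $\bold H\simeq SL_2$, so $\bold H(\Z)\simeq SL_2(\Z)$ is generated by $T=\bigl(\begin{smallmatrix}1&1\\0&1\end{smallmatrix}\bigr)$ and $S=\bigl(\begin{smallmatrix}0&-1\\1&0\end{smallmatrix}\bigr)$. Under the identification described before Lemma \ref{H}, $T$ corresponds to $p_{e_3}\in\bold U_{2e_3}(\Z)\subset\bold N(\Z)$ and $S$ corresponds to $\iota_{e_3}$. Hence $\bold H(\Z)$ is generated by $\iota_{e_3}$ together with an element of $\bold N(\Z)$, and we may drop $\bold H(\Z)$ from the generating set in favor of $\iota_{e_3}$.

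The main obstacle is the first step: producing $\tau\in\bold M'(\Z)$ and verifying determinant invariance. The non-associativity of the octonions makes the cross-term $\tr((xz)\bar y)$ in $\det$ mildly delicate; one must invoke weak associativity together with the anti-involution to get equality after the relabeling of entries. Everything else (preservation of the lattice, $\nu=1$, the conjugation computation) is essentially formal.
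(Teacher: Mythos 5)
Your argument is correct and follows essentially the same route as the paper: both reduce to the previous lemma and realize $\iota_{e_1}$ and $\iota_{e_2}$ as conjugates of $\iota_{e_3}$ by elements of $\bold M'(\Z)$. The only difference is the choice of conjugator --- the paper uses $\varphi_{13},\varphi_{23}$ with $\varphi_{ij}=m_{e_{ij}}m_{-e_{ji}}m_{e_{ij}}$, built from integral unipotent root elements so that membership in $\bold M'(\Z)$ is immediate, whereas your cyclic permutation $\tau$ of the idempotents requires the (correct, but extra) check that it preserves $\det$ and the lattice $\frak J(\Z)$.
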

\begin{proof} This follows from the above lemma and from the identities, $\iota=\iota_{e_1}\iota_{e_2}\iota_{e_3}$, and
$\iota_{e_2}=\varphi_{23}\iota_{e_3}\varphi_{23}^{-1}$, and $\iota_{e_1}=\varphi_{13}\iota_{e_3}\varphi_{13}^{-1}$, where 
$\varphi_{ij}=m_{e_{ij}}m_{-e_{ji}}m_{e_{ij}}$ for $i\ne j$.
\end{proof}

In \cite{B, Ka, kim}, for $Z\in \frak T$ and $g\in \bold G(\R)$, the action is defined by the right action:
$$Z\cdot g=Z_1,\quad p_Z'g=p_A k p_{Z_1}',\, \text{for $k\in \bold M(\C)$ and $Z_1\in \mathcal H$}.
$$
However, following the usual convention, it is more convenient to define the left action by 
$$gZ=Z_1,\quad g p_Z=p_{Z_1} k p_A',\, \text{for $k\in \bold M(\C)$ and $Z_1\in \frak T$}.
$$
Let $j(g,Z)=\nu(k)^{-1}$ be the canonical factor of automorphy. Then $j(g,Z)$ has the following properties:
$$j(p_B,Z)=1\, \text{for all $B\in\frak J_\R$},\quad j(\iota,Z)=\det(-Z),\quad j(g_1g_2,Z)=j(g_1, g_2 Z)j(g_2,Z).
$$
If $J(Z,g)$ is the functional determinant of $g$ at $Z$, then $J(Z,g)=j(g,Z)^{-18}$. By Lemma \ref{parab}, if $k\in \bold M(\R)$, $kZ$ is just the transformation $\nu(k) (kX+kY\sqrt{-1})$, where $Z=X+Y\sqrt{-1}$, and for $kX$ and $kY$, $k$ is considered as an element of $GL(\frak J)$, and $j(k,Z)=\nu(k)^{-1}$.

\begin{defin} Let $F$ be a holomorphic function on $\frak T$ which for some integer $k>0$ satisfies
$$F(\gamma Z)=F(Z) j(\gamma,Z)^k,\quad Z\in \frak T,\, \gamma\in\Gamma.
$$
Then $F$ is called a modular form on $\frak T$ of weight $k$. We denote by $\mathcal{M}_k(\Gamma)$ the space of such forms. 
For a holomorphic function $F:\frak T\lra \C$, 
the boundary map $\Phi$ is defined by 
$$\Phi F(Z')=\lim_{\tau\to \sqrt{-1}\infty}
F\left(\begin{array}{cc}
Z' & \ast \\
{}^t\bar{\ast} & \tau
\end{array}
\right),
$$
where $Z'\in \frak T_2$.
We call $\mathcal{S}_k(\G):={\rm Ker}(\Phi)\cap \mathcal{M}_k(\Gamma)$ the space of cusp forms of weight $k$ with respect to $\G$. 
We should remark that there is only one equivalent class of cusps since $\bold{G}(\Q)=P(\Q)\bold{G}(\Z)$ (\cite{B}, Theorem 5.2). 
\end{defin}

Since $F(Z+B)=F(Z)$ for $B\in \frak J(\Z)$ and $\frak J(\Z)$ is self-dual, $F$ has a Fourier expansion of the form
\begin{equation}\label{FC}
F(Z)=\sum_{T\in \frak J(\Z)_{\geq 0}} a(T) \bold{e}((T,Z)).
\end{equation}
By Koecher principle, we do not need the holomorphy at the cusps. 

If $F$ is a cusp form, $a(T)=0$ for $T\notin \frak J(\Z)_+$.
  
\subsection{Jacobi forms of matrix index}
We define and study Jacobi forms of matrix index on $\mathfrak{D}=\mathbb{H}\times \mathfrak{X}$ in the classical setting. 
Set 
$$\Gamma_J:=J(\Q)\cap \bold G(\Z).
$$
 
\begin{defin}\label{jacobi} Let $k$ be a positive (even) integer and $S$ be an element of $\frak J_2(\Z)_+$. 
We say a holomorphic function $\phi:\mathfrak{D}\lra \C$ is a Jacobi form (resp. Jacobi cusp form) of weight $k$ and index $S$ if 
$\phi$ satisfies the following conditions:

\begin{enumerate}
\item $\phi|_{k,S}[\beta]=\phi$ for any $\beta\in \Gamma_J$
\item $\phi$ has a Fourier expansion of the form 
$$\phi(\tau,u)=\sum_{\xi\in X(\Q),\ N\in \Z}c(N,\xi)\bold{e}(N\tau+\sigma_S(\xi,u)),
$$
where $c(N,\xi)=0$ unless $S_{\xi,N}:=
\left(\begin{array}{cc}
S & S\xi \\
{}^t \bar{\xi}S & N 
\end{array}\right)$ belongs to $\frak J(\Z)_{\ge 0}$ (resp. $\frak J(\Z)_{+}$).
\end{enumerate}
We denote by $J_{k,S}(\Gamma_J)$ (resp. $J^{{\rm cusp}}_{k,S}(\Gamma_J)$) the space of Jacobi forms 
(resp. Jacobi cusp forms) of weight $k$ and index $S$. 
\end{defin}
Define the dual of the lattice $\Lambda:=X(\Z)=\frak o^2$ with respect to the quadratic form $\sigma_S$ by 
$$\widetilde{\Lambda}(S)=\{x\in X(\Q)\ |\ \sigma_S(x,y)\in \Z {\rm \ for\ all}\ y\in \Lambda \}.$$
If $S\in \frak J_2(\Z)_{+}$, then the quotient $\widetilde{\Lambda}(S)/\Lambda$ is a finite group. 
Fix a complete representative $\Xi(S)$ of $\widetilde{\Lambda}(S)/\Lambda$ and denote by 
$\varphi_\xi$ the characteristic function $\xi+\ds\prod_{p<\infty}X(\o_p)\in \mathcal{S}(X(\A_f))$. 
Any Jacobi form turns to be the sum of products of elliptic modular forms and 
theta functions by following lemma. 
\begin{lemma}\label{product} Assume $S\in \frak J_2(\Z)_{+}$. 
Let $\Xi(S)$ be a complete representative of $\widetilde{\Lambda}(S)/\Lambda$. Then 
any $\phi\in J_{k,S}(\Gamma_J)$ has an expression of the form 
$$\phi(\tau,u)=\sum_{\xi\in \Xi(S)}\phi_{S,\xi}(\tau) \theta^S_{\varphi_\xi}(\tau,u),\quad 
\phi_{S,\xi}(\tau)=\sum_{N\in \Z \atop N-\sigma_S(\xi,\xi)\ge 0}c(N,\xi)\bold{e}((N-\sigma_S(\xi,\xi))\tau).
$$
Furthermore, for each $\xi\in \Xi(S)$,  $\phi_{S,\xi}(\tau)$ is an elliptic modular form of weight $k-8$. 
\end{lemma}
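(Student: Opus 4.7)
The plan is to deduce both assertions from the modularity of $\phi$ under two distinct parts of $\Gamma_J$: first the unipotent subgroup $\bold V(\Z) \cap \Gamma_J$, which produces the decomposition; then the Levi copy $\bold H(\Z) \simeq SL_2(\Z)$, which produces the modularity of each coefficient.

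\medskip

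For the decomposition, I would start by applying the invariance $\phi|_{k,S}[\beta] = \phi$ to $\beta = v_1(0, y, 0)$ with $y$ in the integral part of $Y \simeq \frak o^2$. Here the automorphy factor is trivial and the action is $u \mapsto u + y$, so matching Fourier coefficients in $u$ forces $c(N, \xi) = 0$ unless $\bold{e}(\sigma_S(\xi, y)) = 1$ for every $y \in \Lambda$, i.e.\ unless $\xi \in \widetilde{\Lambda}(S)$. Next, applying the same invariance to $\beta = v_1(x, 0, 0)$ with $x \in \Lambda$, whose automorphy factor is $\bold{e}(-\sigma_S(x,x)\tau - 2\sigma_S(x,u))$ and whose action sends $u \mapsto u + x\tau$, a coefficient comparison yields the relation
$$c(N, \xi) \;=\; c\bigl(N + \sigma_S(x, \xi + x),\; \xi + 2x\bigr).$$
Regrouping the Fourier expansion of $\phi$ by cosets $\xi \in \Xi(S)$ of $\Lambda$ inside $\widetilde{\Lambda}(S)$ then collapses the double sum to $\sum_{\xi \in \Xi(S)} \phi_{S,\xi}(\tau)\,\theta^S_{\varphi_\xi}(\tau, u)$ with $\phi_{S,\xi}$ given by the stated $q$-expansion. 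The positivity condition $N - \sigma_S(\xi, \xi) \ge 0$ in that expansion is precisely the semi-definiteness $\det S_{\xi, N} \ge 0$, rewritten through formula (\ref{formula}).

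\medskip

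For the modularity of each $\phi_{S, \xi}$, the input is the invariance of $\phi$ under $h(\gamma) \in \bold H(\Z) \simeq SL_2(\Z) \subset \Gamma_J$, combined with the Weil-representation transformation of theta series from Lemma \ref{adelic-to-classical}(2),
$$\theta^S_{\omega_S(\gamma^{-1})\varphi_\xi}(\tau, u) \;=\; j(\gamma, \tau)^{-8}\, \theta^S_{\varphi_\xi}(\gamma(\tau, u)).$$
Because $\omega_S(\gamma)$ permutes the characteristic functions $\varphi_\xi$ (up to scalars) among the finite set indexed by $\Xi(S) \simeq \widetilde{\Lambda}(S)/\Lambda$, substituting the decomposition of $\phi$ into $\phi|_{k,S}[h(\gamma)] = \phi$ and using the linear independence of the $\theta^S_{\varphi_\xi}$ (as functions of $u$ for generic $\tau$) produces a vector-valued modular equation of weight $k - 8$ for the tuple $(\phi_{S, \xi})_{\xi \in \Xi(S)}$ under the contragredient Weil representation. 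Since this representation factors through a finite quotient of $SL_2(\Z)$, each component $\phi_{S, \xi}$ is an elliptic modular form of weight $k - 8$ on a suitable congruence subgroup, and holomorphy at the cusps follows from the positivity already established.

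\medskip

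The chief technical nuisance is reconciling the normalizations introduced in Section 4: the quadratic form $\sigma_S$ versus $\lambda_S = \sigma_S/2$, the factor $\bold{e}(2\sigma_S(\xi, u))$ built into the theta definition versus $\bold{e}(\sigma_S(\xi, u))$ appearing in the Fourier series of $\phi$, and the twisted coordinate $v_1(\cdot, \cdot, \cdot)$ on $\bold V$. Once these are aligned (Lemma \ref{det} guarantees $\chi_S \equiv 1$, which removes one source of signs), the proof reduces to the standard theta decomposition of Jacobi forms, adapted from $\Z^n$ to the Cayley lattice $\frak o^2$.
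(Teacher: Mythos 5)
Your argument is correct and is precisely the standard theta-decomposition argument for Jacobi forms that the paper itself invokes only by citation (example (iv) in Section 2 of Krieg and the argument on p.~656 of Ikeda's paper): translation invariance under the integral part of $Y$ gives the support condition $\xi\in\widetilde{\Lambda}(S)$, translation invariance under the integral part of $X$ gives the coset periodicity of the $c(N,\xi)$ and hence the regrouping into theta components, and $SL_2(\Z)$-invariance combined with the weight-$8$ transformation law of the $\theta^S_{\varphi_\xi}$ and their linear independence gives the vector-valued modularity of weight $k-8$. Since the paper supplies no further detail, your write-up (including the normalization discrepancies between $\sigma_S$, $\lambda_S$, and the exponent $2\sigma_S(\xi,u)$ in the theta series, which you correctly flag as the point needing reconciliation) is essentially the same proof carried out explicitly.
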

\begin{proof} See example (iv) at Section 2 of \cite{Krieg} and also the argument at p.656 of \cite{Ik1}.  
\end{proof}

Let $k$ be a positive even integer and $F$ be a modular form of weight $k$ on $\mathfrak{T}$. 
Then we have the Fourier-Jacobi expansion 
\begin{equation}\label{FJ}
F\left(\begin{array}{cc}
W & u \\
{}^t\bar{u} & \tau 
\end{array}\right)=\sum_{S\in  \frak J_2(\Z)_{\ge 0}} F_S(\tau,u)\bold{e}((S,W)),\ W\in \frak T_2,\ \tau\in\mathbb{H},\ 
{\rm and}\ u\in X(\R)\otimes_\R\C.
\end{equation}
\begin{lemma}\label{FJC} Keep the notation above. Assume $S\in \frak J(\Z)_+$. 
Then $F_S(\tau,u)\in J_{k,S}(\Gamma_J)$. 
\end{lemma}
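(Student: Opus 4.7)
The plan is standard: the modularity of $F$ under $\Gamma \supseteq \Gamma_J$ is inherited, so one reads off the Jacobi transformation law and the Fourier expansion of $F_S$ from those of $F$.

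First, for the Fourier expansion. Writing $T \in \frak J(\Z)$ in block form with upper-left block $S \in \frak J_2(\Z)$, off-diagonal column $S\xi$ for some $\xi \in X(\Q)$ with $S\xi \in \frak o^2$, and lower-right entry $N \in \Z$, so that $T = S_{\xi,N}$, a direct unfolding of the inner product on $\frak J$ (using the definitions in Section \ref{Cayley}) gives
$$(T, Z) = (S, W) + \sigma_S(\xi, u) + N\tau
$$
for $Z = \left(\begin{array}{cc} W & u \\ {}^t\bar u & \tau \end{array}\right)$. Substituting into (\ref{FC}) and grouping terms by $S$ yields
$$F_S(\tau, u) = \sum_{\xi,N} a(S_{\xi,N})\, \bold{e}(N\tau + \sigma_S(\xi, u)),
$$
where the sum runs over $(\xi, N)$ with $S_{\xi,N} \in \frak J(\Z)_{\ge 0}$. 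This is condition (2) of Definition \ref{jacobi}.

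Next, for the transformation law. Since the Jacobi group is generated (in the $v_1$-coordinates) by the Heisenberg subgroup $V(\Q) \cap \Gamma$ together with $H(\Q) \cap \Gamma \simeq SL_2(\Z)$, it suffices to check (1) of Definition \ref{jacobi} separately for $\beta$ in each piece. Using Lemma \ref{parab} together with the Heisenberg multiplication (\ref{mult2}) and the $\bold H$-conjugation (\ref{h2}), I compute how $\beta$ acts on the block variable $Z$. The key structural point is that $\beta$ alters $W$ only by adding a term $A_\beta(u,\tau)$ that is independent of $W$; hence upon comparing the two Fourier--Jacobi expansions in
$$F(\beta Z) = j(\beta, Z)^k\, F(Z),
$$
the $\bold{e}((S, W))$-factors separate, so for each $S$ one obtains
$$F_S(\beta'(\tau, u))\, \bold{e}((S, A_\beta)) = j(\beta, Z)^k\, F_S(\tau, u),
$$
where $\beta'$ denotes the induced action of $\beta$ on $\mathfrak D$. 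Recognising $j(\beta, Z)^k\, \bold{e}(-(S, A_\beta))$ as the index-$S$ automorphy factor $j_{k,S}(\beta, (\tau,u))$ defined in Section \ref{weil-rep} then gives $F_S\vert_{k,S}[\beta] = F_S$.

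The main obstacle is this last identification. The exponential contribution $\bold{e}(-(S,z))$ inside $j_{k,S}$ arises from the central $Z(R)$-part of $V$ translating $W$ by $z$; the terms $\bold{e}(\lambda_S(x,y))$ and $\bold{e}(-2\sigma_S(x,u)/j(\gamma,\tau))$ come from the $X(R)$- and $Y(R)$-parts of $V$ shifting $u$ and inducing quadratic corrections on $W$; and the factor $j(\gamma,\tau)^k\, \bold{e}(c\sigma_S(u,u)/j(\gamma,\tau) - \sigma_S(x,x)(\gamma\tau))$ arises from the $SL_2$-factor $h(\gamma)$ acting on $(\tau, u)$ by fractional linear transformation and on $W$ by a quadratic correction in $u$. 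Checking these identifications on the three generator classes---the unipotent subgroup $V$, the diagonal torus $h(a)$, and the Weyl element $\iota_{e_3}$---avoids a separate verification of the cocycle identity for $j_{k,S}$, which was already recorded in Section \ref{weil-rep}.
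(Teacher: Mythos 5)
Your proposal is correct and follows essentially the same route as the paper: the paper's own proof consists of writing down the Fourier expansion of $F_S$ in the blocked coordinates and then citing the argument at p.~656 of \cite{Ik1}, which is precisely the generator-by-generator verification of the transformation law that you spell out (translation of $W$ by a term $A_\beta(u,\tau)$ independent of $W$, separation of the $\bold{e}((S,W))$-factors, and identification of $j(\beta,Z)^k\bold{e}(-(S,A_\beta))$ with $j_{k,S}$). The only difference is one of explicitness, not of method.
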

\begin{proof}
It is easy to see that $F_S(\tau,u)=\ds\sum_{T\in \frak J_S^+} a(T) \bold{e}(c\tau) \bold{e}((T, {}^t \bar v u)),$
where $\frak J(\Z)_+$ is the set of $T=\begin{pmatrix} S&v\\ {}^t \bar v&c\end{pmatrix}\in \frak J^+$. 
Then the claim follows from the argument at p.656 of \cite{Ik1}. 
\end{proof}

\begin{remark}\label{important-rmk}
Consider any holomorphic function $F(Z),\ Z=\left(\begin{array}{cc}
W & u \\
{}^t\bar{u} & \tau 
\end{array}\right),\ W\in \frak T_2,\ \tau\in\mathbb{H},\ 
{\rm and}\ u\in X(\R)\otimes_\R\C$ on $\frak T$ which is invariant under $\G\cap P(\Q)$. 
Then one has 
the Fourier and Fourier-Jacobi expansion 
$$F(Z)=\sum_{T\in  \frak J(\Z)_{\ge 0}}A_F(T)\bold{e}((T,Z))=\sum_{S\in  \frak J_2(\Z)_{\ge 0}}F_S(\tau,u)\bold{e}((S,W)),
$$ 
as in (\ref{FJ}). 
By the proof of Lemma \ref{product}, 
$$F_S(\tau,u)=\sum_{\xi\in \Xi(S)}F_{S,\xi}(\tau) \theta^S_{\varphi_\xi}(\tau,u),\quad 
F_{S,\xi}(\tau)=\sum_{N\in \Z \atop N-\sigma_S(\xi,\xi)\ge 0}A_F(S_{\xi,N})\bold{e}((N-\sigma_S(\xi,\xi))\tau),
$$
where $S_{\xi,N}=\left(\begin{array}{cc}
S & S\xi \\
{}^t\bar{\xi}S & N 
\end{array}\right)$. In this paper, the function $F_{S,\xi}$ will be called by $(S,\xi)$-component of $F$. 
\end{remark}

We now discuss the relationship between the adelic setting and classical setting. (See \cite{borel&jacquet} for automorphic forms in the adelic setting.) 
Let $\psi$ be a non-trivial additive character of $\Q\ba\A$ and 
for $S\in \frak J_2(\Z)_{+}$, put $\psi_S=\psi\circ {\rm tr}_S:Z\lra \C,\ z\mapsto \psi(\frac{1}{2}(S,z))$. 
\begin{defin}\label{adelic-FJC}
Let $\tilde F$ be an automorphic function on $\bold{G}(\A)$. For each $S\in \frak J_2(\Z)_{+}$, the $S$-th Fourier-Jacobi 
coefficient $F_{\psi_S}$ of $\tilde F$ with respect to $\psi$ is a function on $J(\Q)\ba J(\A)$ given by 
$$F_{\psi_S}(v_1h)=\int_{Z(\Q)\ba Z(\A)} \tilde F(zv_1h)\psi^{-1}_S(z)dz,\ v\in \bold V(\A),\ h\in H(\A).$$
\end{defin}

Let $F$ be a modular form in $\mathcal{M}_{k}(\Gamma)$, and consider its Fourier-Jacobi expansion 
$$
F=\ds\sum_{S\in  \frak J_2(\Z)_{\ge 0}} F_S(\tau,u)\bold{e}((S,W))
$$ 
as above (see (\ref{FJ})). We are using $F_{\psi_S}$ for the Fourier-Jacobi coefficient of $\tilde F$. We hope that this does not cause confusion with $F_S$, which is the Fourier-Jacobi coefficient of $F$.
Let $\tilde F$ denote the automorphic form on $\bold{G}(\A)$ corresponding to $F$ by the strong approximation theorem. Namely, 
\begin{equation}\label{auto-form}
\tilde F(g)=j(g_\infty, E\sqrt{-1})^{-k}F(g_\infty E\sqrt{-1}),\: {\rm for}\  g=\gamma\cdot g_\infty\cdot k'\in \bold{G}(\Q)\bold{G}(\R)K.
\end{equation}
Similarly if we write any element of $J(\A)$ as $v_1h=a\cdot v_{1,\infty}h_\infty\cdot k'_J\in J(\Q)J(\R)K_J$ 
where $K_J=K\cap J(\A)$, 
one has 
$$F_{(\psi^{{\rm st}}_S)^2}(v_1h)=F_S|_{k,S}[v_{1,\infty}h_\infty](\sqrt{-1},0),
$$
by Lemma \ref{FJC}. 
It follows from this that 
$F_{(\psi^{{\rm st}}_S)^2}(v_1h)$ is left invariant under the action of the lattice $\Lambda=X(\frak o)$. 
We also identify $\Lambda$ with a lattice of $Y(\R)$ in an obvious way.

Fix $S\in \frak J_2(\Z)_{+}$. 
For each $\xi \in \Xi(S)$, we put 
$$J^S_{\varphi_\xi}(h;F_{(\psi^{{\rm st}}_S)^2}):=\int_{\bold V(\Q)\ba \bold V(\A)}F_{(\psi^{{\rm st}}_S)^2}(v_1h)
\overline{\Theta^{(\psi^{{\rm st}}_S)^2}(v_1h;\varphi_\xi)}dv_1.$$
Since $F_{(\psi^{{\rm st}}_S)^2}(zv_1h)=(\psi^{{\rm st}}_S)^2(z)F_{(\psi^{{\rm st}}_S)^2}(v_1h)$, one has 
$$J^S_{\varphi_\xi}(h;F_{(\psi^{{\rm st}}_S)^2})=
\int_{(X\oplus Y)(\Q)\ba (X\oplus Y)(\A)}F_{(\psi^{{\rm st}}_S)^2}
(v_1(x,y,0)h)\overline{\Theta^{(\psi^{{\rm st}}_S)^2}(v_1(x,y,0)h;\varphi_\xi)}dv_1(x,y,0).
$$
By Lemma \ref{some-theta}, $\Theta^{(\psi^{{\rm st}}_S)^2}(v_1(x,y,0)h;\varphi_\xi)=
\Theta^{(\psi^{{\rm st}}_S)^2}(v_1(x_\infty,y_\infty,0)h;\varphi_\xi)$.
Then one has 
$$
\begin{array}{rl}
J^S_{\varphi_\xi}(h;F_{(\psi^{{\rm st}}_S)^2})&=
\ds\int_{\Lambda\ba X(\R)\oplus \Lambda\ba Y(\R)}
F_S|_{k,S}[v_{1,\infty}h_\infty](\sqrt{-1},0)\overline{\Theta^{(\psi^{{\rm st}}_S)^2}(v_{1,\infty}h;\varphi_\xi)}dv_{1,\infty}
(x_\infty,y_\infty)
\end{array},
$$
where $v_{1,\infty}(x_\infty,y_\infty)=v_1(x_\infty,y_\infty,0)$. Take 
$h_\infty=\left(
\begin{array}{cc}
y^{\frac{1}{2}}_\infty & x_\infty y^{-\frac{1}{2}}_\infty\\
0 & y^{-\frac{1}{2}}_\infty
\end{array}\right)\in \bold H(\R)$ so that $h_\infty \sqrt{-1}=x_\infty+\sqrt{-1} y_\infty$. Set 
$\tau=h_\infty \sqrt{-1}$ and $v_{1,\infty}h_\infty(\sqrt{-1},0)=(\tau,u)$. Put   
$L_\tau:=\{\lambda_1\tau+\lambda_2\in X(\R)\otimes_\R\C\ |\ \lambda_i\in \Lambda,\ i=1,2 \}$. 
Then by Lemma \ref{adelic-to-classical}, 
one has 
$$
\begin{array}{rl}
& J^S_{\varphi_\xi}(h;F_{(\psi^{{\rm st}}_S)^2})=
\ds\int_{\Lambda\ba X(\R)\oplus \Lambda\ba Y(\R)}
F_S|_{k,S}[v_{1,\infty}h_\infty](\sqrt{-1},0)
\overline{\theta^S_{\varphi_\xi}|_{8,S}[v_{1,\infty}h_\infty](\sqrt{-1},0)}\, dv_{1,\infty}\\
&(\mbox{put $u=x_\infty+\tau y_\infty$})\\
& =\ds\frac{1}{j(g_\infty,i)^{k-8}}
\ds\int_{L_\tau\ba X(\R)\otimes_\R \C}
F_S(\tau,u)\overline{\theta^S_{\varphi_\xi}(\tau,u)}e^{-4\pi({\rm Im}(\tau))^{-1}\sigma_S({\rm Im}(u), {\rm Im}(u))}
\Big|\ds\frac{\partial(x_\infty,y_\infty)}{\partial u}\Big| \, du\\
&=\ds\frac{2^{-8}y^{-8}_\infty}{j(g_\infty,i)^{k-8}}
\ds\int_{L_\tau\ba X(\R)\otimes_\R \C}
F_S(\tau,u)\overline{\theta^S_{\varphi_\xi}(\tau,u)}e^{-4\pi({\rm Im}(\tau))^{-1}\sigma_S({\rm Im}(u), {\rm Im}(u))} \, du\\
&\mbox{(by Lemma \ref{product})} \\ 
&=\ds\frac{2^{-8}y^{-8}_\infty}{j(g_\infty,i)^{k-8}}
\ds\int_{L_\tau\ba X(\R)\otimes_\R \C}
\sum_{\xi'\in \Xi(S)}F_{S,\xi'}(\tau) \theta^S_{\varphi_{\xi'}}(\tau,u)\overline{\theta^S_{\varphi_\xi}(\tau,u)}e^{-4\pi({\rm Im}(\tau))^{-1}\sigma_S({\rm Im}(u), {\rm Im}(u))}du \\
&=\ds\frac{2^{-8}y^{-8}_\infty}{j(g_\infty,i)^{k-8}}F_{S,\xi}(\tau)2^{-24}\det(S)^{-4}y^{8}_\infty \\
&= 2^{-32}\det(S)^{-4}j(h_\infty,i)^{-(k-8)}F_{S,\xi}(\tau). 
\end{array}
$$
Here we used the following formula to get the last equality: for each $\xi$, 
$$\ds\int_{L_\tau\backslash X(\R)\otimes_\R \C}
 \theta^S_{\varphi_{\xi'}}(\tau,u)\overline{\theta^S_{\varphi_\xi}(\tau,u)}e^{-4\pi({\rm Im}(\tau))^{-1}
\sigma_S({\rm Im}(u), {\rm Im}(u))}du
=\left\{
\begin{array}{ll}
2^{-24}\det(S)^{-4}y^{8}_\infty & {\rm if}\ \xi'=\xi \\
0& {\rm otherwise}
\end{array}\right.
$$ 
(Apply Lemma \ref{ortho} for $n=16$ and combine this with ${\rm disc}(\sigma_S)=2^{16}{\rm disc}(\lambda_S)=
2^{16}\det(S)^8$ by Lemma \ref{det}.) 

Summing up, we have proved the following  
\begin{lemma}\label{relation} $j(h_\infty,i)^{(k-8)}J^S_{\varphi_\xi}(h_\infty;F_{(\psi^{{\rm st}}_S)^2})=C_S 
F_{S,\xi}(\tau),\ \tau=h_\infty i$, 
\newline where $C_S=2^{-32}\det(S)^{-4}$. 
\end{lemma}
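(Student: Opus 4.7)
The plan is to unfold the definition of $J^S_{\varphi_\xi}$ and reduce the adelic integral to the classical one step by step, using the ingredients assembled just above the lemma.

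First I would exploit the fact that both $F_{(\psi^{\rm st}_S)^2}$ and $\overline{\Theta^{(\psi^{\rm st}_S)^2}(\cdot;\varphi_\xi)}$ transform under $Z(\A)$ by the characters $(\psi^{\rm st}_S)^2$ and $(\psi^{\rm st}_S)^{-2}$ respectively, so the product is $Z(\A)$-invariant and the integral over $\bold V(\Q)\ba\bold V(\A)$ collapses to an integral over $(X\oplus Y)(\Q)\ba(X\oplus Y)(\A)$. Then Lemma \ref{some-theta} replaces the theta function by its purely archimedean incarnation $\Theta^{(\psi^{\rm st}_S)^2}(v_1(x_\infty,y_\infty,0)h;\varphi_\xi)$, so the non-archimedean integration simply cuts the variables $(x_\infty,y_\infty)$ down to the fundamental domain $\Lambda\ba X(\R)\oplus \Lambda\ba Y(\R)$ for $\Lambda=X(\frak o)$.

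Next I would apply Lemma \ref{adelic-to-classical}(1) to each of $F_{(\psi^{\rm st}_S)^2}$ and $\Theta^{(\psi^{\rm st}_S)^2}(\cdot;\varphi_\xi)$, rewriting them as $F_S|_{k,S}$ and $\theta^S_{\varphi_\xi}|_{8,S}$ evaluated at $v_{1,\infty}h_\infty(\sqrt{-1},0)$. Setting $\tau=h_\infty\sqrt{-1}$ and $u=x_\infty+\tau y_\infty$, the automorphy factors combine into an overall $j(h_\infty,\sqrt{-1})^{-(k-8)}$ together with the Gaussian $e^{-4\pi(\mathrm{Im}\,\tau)^{-1}\sigma_S(\mathrm{Im}\,u,\mathrm{Im}\,u)}$ produced by the archimedean piece of $\varphi_\xi$. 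The change of variables from $(x_\infty,y_\infty)$ to $u$ produces a Jacobian $2^{-8}y_\infty^{-8}$ and replaces the fundamental domain by $L_\tau\ba (X(\R)\otimes_\R\C)$.

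At this point I would expand $F_S(\tau,u)=\sum_{\xi'\in\Xi(S)}F_{S,\xi'}(\tau)\theta^S_{\varphi_{\xi'}}(\tau,u)$ via Lemma \ref{product} and invoke the orthogonality relation for theta functions (Lemma \ref{ortho} in the appendix), together with the discriminant computation $\mathrm{disc}(\sigma_S)=2^{16}\det(S)^8$ from Lemma \ref{det}, to isolate the term $\xi'=\xi$ with the constant $2^{-24}\det(S)^{-4}y_\infty^{8}$. Multiplying the various factors gives $2^{-8}y_\infty^{-8}\cdot 2^{-24}\det(S)^{-4}y_\infty^{8}=2^{-32}\det(S)^{-4}=C_S$ together with the automorphy factor $j(h_\infty,\sqrt{-1})^{-(k-8)}$, which is the asserted identity. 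The main obstacle is purely bookkeeping: matching the Jacobian $2^{-8}$, the weight of the theta kernel $y_\infty^{\pm 8}$ coming from $\tfrac12\dim_\R X(\R)=8$, and the discriminant factor $\det(S)^{-4}$ so that all constants align; no individual step is deep, but a single misplaced power of $2$ or of $\det(S)$ would spoil the normalization needed later when Fourier--Jacobi coefficients of Eisenstein series are compared with products of theta series and Eisenstein series.
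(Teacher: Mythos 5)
Your proposal is correct and follows essentially the same route as the paper: reduce over the center using the $\psi_S$-equivariance, invoke Lemma \ref{some-theta} to make the theta kernel archimedean, pass to the classical picture via Lemma \ref{adelic-to-classical}, change variables to $u=x_\infty+\tau y_\infty$ with Jacobian $2^{-8}y_\infty^{-8}$, expand $F_S$ by Lemma \ref{product}, and apply the orthogonality relation of Lemma \ref{ortho} (with $n=16$ and ${\rm disc}(\sigma_S)=2^{16}\det(S)^8$ from Lemma \ref{det}) to isolate the $\xi'=\xi$ term with constant $2^{-24}\det(S)^{-4}y_\infty^{8}$. The bookkeeping of constants matches the paper's computation exactly, yielding $C_S=2^{-32}\det(S)^{-4}$.
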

In the next section, we will prove 
$J^S_{\varphi_\xi}(h;F_{(\psi^{{\rm st}}_S)^2})$ is a section of a degenerate principal series representation of $SL_2(\A)$ if 
$\tilde F$ is an (adelic) Eisenstein series on $\bold{G}(\A)$. By Lemma \ref{relation} above, we will conclude that 
$F_{S,\xi}(\tau)$ is an Eisenstein series in the classical sense. 

\section{Eisenstein series and their Fourier coefficients}

Recall from \cite{kim} an Eisenstein series: Let $\Gamma_{\infty}=\Gamma\cap \bold N(\Bbb Q)$. 
For $l$ a positive integer and $s\in \C$,
$$E_{2l,s}(Z)=\sum_{\gamma\in\Gamma_{\infty}\backslash\Gamma} j(\gamma, Z)^{-2l}|j(\gamma,Z)|^{-s}.
$$

When $s=0$ and $2l>18$, Karel \cite{Ka} computed the Fourier coefficients and showed that they have bounded denominators.
Let 
$$E_{2l}(Z)=E_{2l,0}(Z)=\sum_{T\in \frak J(\Z)_+} a_{2l}(T) \bold{e}((T,Z)).
$$
\begin{theorem}\cite{Ka}\label{fourier1} For $T\in \frak J(\Z)_+$,
$$a_{2l}(T)=C_{2l} \det(T)^{2l-9} \prod_{p| \det(T)} f_T^p(p^{9-2l}),
$$
where $C_{2l}=2^{15}\displaystyle\prod_{n=0}^2 \frac {2l-4n}{B_{2l-4n}}$, and $f_T^p$ is a monic polynomial with rational integer coefficients of degree $d=\text{{\rm ord}}_p (\det(T))$. It satisfies the functional equation
$$X^d f_T^p(X^{-1})=f_T^p(X).
$$
\end{theorem}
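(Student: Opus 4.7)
The plan is to evaluate $a_{2l}(T)$ by realizing $E_{2l,s}$ as a degenerate Eisenstein series on $\bold{G}(\A)$ induced from the Siegel parabolic $\bold{P} = \bold{M}\bold{N}$, unfolding via the Bruhat decomposition, and reducing to an Euler product of local zeta integrals over $\bold{N}(\Q_p) \simeq \frak J(\Q_p)$, which I would then specialize at $s = 0$.

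First I would rewrite $E_{2l,s}$ adelically as an Eisenstein series attached to a standard section $f_s$ of $\mathrm{Ind}_{\bold{P}(\A)}^{\bold{G}(\A)}\,|\nu(\cdot)|^{s'}$ for the appropriate shift $s'$, with archimedean component chosen so that the factor of automorphy $j(\cdot,Z)^{-2l}$ is recovered. Because $\bold{G}(\Q) = \bold{P}(\Q) \sqcup \bold{P}(\Q)\,\iota\,\bold{N}(\Q)$, only the big cell contributes to the $T$-th Fourier coefficient for non-degenerate $T$, yielding the unfolded identity
\[
a_{2l,s}(T) \;=\; \int_{\bold{N}(\A)} f_s(\iota\, p_B)\, \bold{e}(-(T,B))\, dB \;=\; \prod_{p \le \infty} W_p(T,s).
\]

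Second, the archimedean factor $W_\infty(T,0)$ is a confluent hypergeometric integral of the shape
\[
\int_{\frak J_\R} \det(B - Y\sqrt{-1})^{-2l}\, \bold{e}(-(T,B))\, dB
\]
on the symmetric cone of $\frak J_\R$ of rank $3$. Its evaluation by the Gindikin gamma function of the cone produces three gamma factors together with $\det(T)^{2l-9}$. After absorbing the gamma factors into the archimedean normalizing constant of the Eisenstein series and applying the reflection $\zeta(1-2m) = -B_{2m}/(2m)$ to the three zeta values $\zeta(2l),\,\zeta(2l-4),\,\zeta(2l-8)$ that appear in normalizing the intertwining operator on $\bold{P}$, the residual constant reorganizes into $C_{2l} = 2^{15}\prod_{n=0}^{2}(2l-4n)/B_{2l-4n}$.

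Third, for each finite prime $p$, a change of variables $B \mapsto B + B_0$ modulo increasing powers of $p$ reduces $W_p(T,s)$ to a finite sum over residue strata of $\frak J(\Z_p)$ indexed by a canonical form of $T$ modulo $p^N$, exhibiting $W_p(T,s)$ as a rational function of $X = p^{-s'}$. Multiplying through by the normalizing local $L$-factor $L_p(s)$ yields the polynomial $f_T^p(X)$; only strata whose support is controlled by $\det T$ modulo $p^N$ contribute, so the degree equals $\mathrm{ord}_p(\det T) = d$, and for $p \nmid \det T$ the integral degenerates to $1$. The functional equation $X^d f_T^p(X^{-1}) = f_T^p(X)$ is inherited from the local functional equation of the standard intertwining operator $M(w_0, s)$ acting on the degenerate principal series, together with self-duality of the section, and monicity plus integrality of coefficients follow by tracking the leading and lowest-order contributing strata. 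The main obstacle lies in this third step: $W_p(T, s)$ requires a $p$-adic canonical form for elements of $\frak J(\Z_p)$, which is not as direct as the Jordan block decomposition used for $Sp_{2n}$ in \cite{Ik1}; one must invoke the structure theory of the exceptional Jordan algebra over local fields to enumerate the contributing strata, estimate the degree, and extract the functional equation. Once this combinatorial control is achieved, polynomiality, integrality, monicity, and the functional equation are formal consequences, as carried out in Karel's thesis.
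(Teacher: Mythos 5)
The first thing to say is that the paper contains no proof of this statement: Theorem \ref{fourier1} is imported verbatim from Karel \cite{Ka}, and the paper itself remarks that the functional equation of $f_T^p$ is only implicit there and is made explicit in \cite{kim}, p.~185. So there is no internal argument to compare yours against. That said, your outline is the standard strategy and is consistent with what Karel and \cite{kim} actually do: unfold the Eisenstein series over the $\bold P$--$\bold P$ double cosets, note that for nondegenerate $T$ only the open cell survives, evaluate the archimedean factor by the cone integral $\int_{R_3^+(\R)}\det(X)^{2l-9}e^{2\pi(X,Y)}\,dX=\pi^{12}(2\pi i)^{-6l}\prod_{n=0}^2\Gamma(2l-4n)\det(Y)^{-2l}$ (\cite{B}, p.~538), which produces $\det(T)^{2l-9}$, and normalize the local singular series $S_p(T)$ by $(1-p^{-s})(1-p^{4-s})(1-p^{8-s})$, which is where the three zeta values and hence $C_{2l}$ come from.

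Two problems remain. First, the decomposition $\bold G(\Q)=\bold P(\Q)\sqcup\bold P(\Q)\,\iota\,\bold N(\Q)$ you invoke is false: the relative root system is of type $C_3$, so $\bold P\backslash\bold G/\bold P$ has four double cosets, not two. Your conclusion survives because the two intermediate cells contribute only to degenerate $T$, but as stated the unfolding step rests on a wrong identity. Second, and more seriously, everything the theorem actually asserts about $f_T^p$ --- that the normalized local density is a polynomial, that it is monic with rational integer coefficients, that its degree is exactly $d=\mathrm{ord}_p(\det T)$, and that $X^d f_T^p(X^{-1})=f_T^p(X)$ --- lives entirely in your third step, which you defer to an unexecuted stratification of $\frak J(\Z_p)$ ``as carried out in Karel's thesis.'' That computation is the entire content of \cite{Ka}; without it your argument only shows that $a_{2l}(T)$ factors as an archimedean integral times an Euler product of local densities. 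Moreover the functional equation is not a formal consequence of the intertwining operator $M(s)$ and self-duality of the section: to run that argument one needs a multiplicity-one statement for the degenerate Whittaker functional on $I_p(s)$ at nondegenerate $T$ plus an explicit evaluation of the resulting proportionality constant, neither of which you supply, and in the literature the identity is instead extracted from the explicit form of $S_p(T)$. So the proposal is a reasonable roadmap to Karel's proof, but as it stands it has a genuine gap exactly where the theorem's content lies.
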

Here $B_{2k}$ is the Bernoulli number; $\zeta(2k)=\ds\frac {2^{2k-1}\pi^{2k}B_{2k}}{(2k)!}$. If $n_{2l}$ is the numerator of $\displaystyle\prod_{n=0}^2 B_{2l-4n}$, then $n_{2l} E_{2l}(Z)$ has rational integer Fourier coefficients.
The functional equation of $f_T^p$ is implicit in \cite{Ka}, and it is stated explicitly in \cite{kim}, page 185. 

\begin{corollary}\label{fourier2}
Keep the notation in the theorem above. Set $\widetilde{f}^p_T(X):=X^{d}f_T^p(X^{-2})$, where $d={\rm ord}_p (\det(T))$. 
Then 
$$a_{2l}(T)=C_{2l}\det(T)^{\frac{2l-9}{2}} \prod_{p| \det(T)} \widetilde{f}^p_T(p^{\frac{2l-9}{2}}),
$$
and $\widetilde{f}^p_T(X)=\widetilde{f}^p_T(X^{-1})$. 
\end{corollary}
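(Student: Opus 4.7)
The plan is to derive the corollary directly from Theorem \ref{fourier1} by algebraic manipulation, with no additional input beyond the definition $\widetilde{f}^p_T(X) := X^d f_T^p(X^{-2})$ and the functional equation $X^d f_T^p(X^{-1}) = f_T^p(X)$ already recorded in the theorem.

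First I would evaluate $\widetilde{f}^p_T$ at $X = p^{(2l-9)/2}$. Substituting into the definition gives
\[
\widetilde{f}^p_T\bigl(p^{(2l-9)/2}\bigr) \;=\; p^{d(2l-9)/2}\, f_T^p\bigl(p^{9-2l}\bigr),
\]
where $d = \mathrm{ord}_p(\det T)$. Taking the product over the finitely many primes $p \mid \det(T)$ and collecting the prime-power prefactors via $\prod_{p \mid \det(T)} p^{\,d(2l-9)/2} = \det(T)^{(2l-9)/2}$, I obtain
\[
\prod_{p \mid \det(T)} \widetilde{f}^p_T\bigl(p^{(2l-9)/2}\bigr) \;=\; \det(T)^{(2l-9)/2} \prod_{p \mid \det(T)} f_T^p\bigl(p^{9-2l}\bigr).
\]
Multiplying both sides by $C_{2l}\det(T)^{(2l-9)/2}$ and applying Theorem \ref{fourier1} to rewrite the right-hand side as $a_{2l}(T)$ gives the desired Fourier-coefficient formula.

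For the symmetry $\widetilde{f}^p_T(X) = \widetilde{f}^p_T(X^{-1})$, I would set $Y := X^{-2}$ and compute directly from the definition:
\[
\widetilde{f}^p_T(X^{-1}) \;=\; X^{-d} f_T^p(X^2) \;=\; X^{-d}\cdot Y^{-d}\bigl(Y^d f_T^p(Y^{-1})\bigr)\big|_{Y = X^{-2}} \cdot X^{2d},
\]
so that the functional equation $f_T^p(Y) = Y^d f_T^p(Y^{-1})$ from Theorem \ref{fourier1} collapses the right-hand side to $X^d f_T^p(X^{-2}) = \widetilde{f}^p_T(X)$. Equivalently, one checks $\widetilde{f}^p_T(X) / \widetilde{f}^p_T(X^{-1}) = X^{2d} f_T^p(X^{-2}) / f_T^p(X^2)$ and uses the functional equation with $Y = X^2$.

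There is no real obstacle here; the only point requiring care is bookkeeping the exponent of $p$ so that the replacement of $\det(T)^{2l-9}$ by $\det(T)^{(2l-9)/2}$ is compensated exactly by the $p^{d(2l-9)/2}$ factors absorbed into each $\widetilde{f}^p_T$. The corollary is really just a repackaging that distributes the normalizing square root of $\det(T)$ evenly among the local factors, and this is precisely what the symmetric variable $\widetilde{f}^p_T$ is designed to achieve.
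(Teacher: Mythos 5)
Your proposal is correct and is exactly the intended argument: the paper offers no separate proof of this corollary, which follows from Theorem \ref{fourier1} by the direct substitution $\widetilde{f}^p_T(p^{(2l-9)/2})=p^{d(2l-9)/2}f_T^p(p^{9-2l})$, the identity $\prod_{p\mid\det(T)}p^{d(2l-9)/2}=\det(T)^{(2l-9)/2}$, and the functional equation applied at $Y=X^2$. Your bookkeeping of the exponents is right, so nothing further is needed.
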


We can interpret this from the degenerate principal series as in the Siegel case \cite{Ku}. 
Let $K_{\infty}$ be the stabilizer of $E\sqrt{-1}$ in $\bold G(\R)$, where $E={\rm diag}(1,1,1)\in \frak J(\Q)$. It is a maximal compact subgroup of $\bold G(\R)$, 
and its complexification $K_{\infty,\C}$ is conjugate in $\bold G(\C)$ to $\bold M(\C)$ by the Cayley transform.
Let $K=K_{\infty}\prod_p K_p$, where $K_p=\bold G(\Z_p)$. 
By the strong approximation theorem, $\bold G(\Bbb A)=\bold G(\Q)\bold G(\R) K$.

For $s\in\C$, let $I(s)$ be the degenerate principal series representation of $\bold{G}(\A)$ consisting of any smooth, $K$-finite   
function $f:\bold{G}(\A)\lra \C$ such that 
$$f(pg)=\delta^{\frac{1}{2}}_{\bold P}(p)|\nu(p)|^s_{\A}(g)$$
 for any  $p\in \bold P(\A)$ and any  $g\in \bold{G}(\A)$ 
where $\bold P=\bold M\bold N$ is the Siegel parabolic subgroup.
Note that the modulus character $\delta_P$ is given by $\delta_{\bold P}(mn)=|\nu(m)|^{18}_{\A}$. We denote it also by $I(s)=\mbox{Ind}_{\bold P(\Q_p)}^{\bold G(\Q_p)} \ |\nu(g)|^s$.

Let $\Phi(g, s)=\Phi_\infty(g, s)\otimes \otimes_p \Phi_p(g, s)$ be a standard section in $I(s)$. Then one can define the Siegel Eisenstein series
$$E(g,s,\Phi)=\sum_{\gamma\in \bold P(\Q)\backslash \bold{G}(\Q)} \, \Phi(\gamma g, s).
$$
It satisfies the functional equation
$$E(g,s,\Phi)=E(g,-s,M(s)\Phi),\quad M(s): I(s)\longrightarrow I(-s),\quad M(s)\Phi(g)=\int_{\bold N(\A)} \Phi(ng,s)\, dn.
$$
Now $\bold G(\R)=\bold P(\R)K_{\infty}$, and hence $\Phi_\infty$ is determined by its restriction to $K_\infty$. We choose
$$\Phi_\infty(k, s)=\nu(\bold k)^{2l},
$$
where $\bold k\in \bold M(\C)$ corresponds to $k\in K_\infty$ by the Cayley transform. 
Hence $\Phi(mnk,s)=|\nu(m)|^{s+9}_{\A}\nu(\bold k)^{2l}$.

By \cite{B}, page 527, given $Y\sqrt{-1} \in\frak T$, there exists $m\in \bold M(\R)$ such that $m (E\sqrt{-1})=Y\sqrt{-1}$. 
Hence $p_{X}m (E\sqrt{-1})=X+ Y\sqrt{-1}$. Let $g=p_{X}m$. 

Now for $\gamma\in\Gamma$, by Iwasawa decomposition, $\gamma g=n m' k$ with $n\in\bold N(\R)$, $m'\in\bold M(\R)$, and $k\in K_\infty$.
Then 
$$\gamma g (E\sqrt{-1})=\gamma Z=n m' (E\sqrt{-1})=X_1+ Y_1\sqrt{-1}.
$$
Hence $m'(E\sqrt{-1})=Y_1\sqrt{-1}$ and $n=p_{X_1}$.
On the other hand, 
$$j(\gamma g, E\sqrt{-1})=j(\gamma,Z)j(g, E\sqrt{-1})=j(m', E\sqrt{-1})j(k,E\sqrt{-1}).
$$
Here $j(g,E\sqrt{-1})=j(m,E\sqrt{-1})=\det(Y)^{-1}$, $j(m',E\sqrt{-1})=\det(Y_1)^{-1}$. By \cite{BB}, page 500,

\begin{lemma} For $k\in K_\infty$, $j(k,E\sqrt{-1})=\nu(\bold k)^{-1}$, and hence $|j(k,E\sqrt{-1})|=1$.
\end{lemma}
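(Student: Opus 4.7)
The plan is to transport the calculation from the isotropy at $E\sqrt{-1}$ to the Levi $\bold M$ via the Cayley transform (which the excerpt records as conjugating $K_{\infty,\C}$ onto $\bold M(\C)$ inside $\bold G(\C)$), and then to use the already-recorded formula $j(m, Z) = \nu(m)^{-1}$ for $m \in \bold M$ together with the cocycle identity $j(g_1g_2, Z) = j(g_1, g_2 Z)\,j(g_2, Z)$.

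Fix a Cayley element $c \in \bold G(\C)$ implementing the conjugation $c^{-1} K_{\infty,\C}\, c = \bold M(\C)$, and set $Z_0 := c^{-1}(E\sqrt{-1})$. For $k \in K_\infty$ let $\bold k := c^{-1} k c \in \bold M(\C)$; since $k$ fixes $E\sqrt{-1}$, one has $\bold k(Z_0) = c^{-1}k(E\sqrt{-1}) = Z_0$. Applying the cocycle to the identity $kc = c\bold k$ evaluated at $Z_0$ gives
$$j(k, E\sqrt{-1})\,j(c, Z_0) = j(kc, Z_0) = j(c\bold k, Z_0) = j(c, \bold k Z_0)\,j(\bold k, Z_0) = j(c, Z_0)\,j(\bold k, Z_0).$$
Dividing by the nonzero scalar $j(c, Z_0)$ leaves $j(k, E\sqrt{-1}) = j(\bold k, Z_0)$. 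The identity $j(m,Z) = \nu(m)^{-1}$ holding for $m \in \bold M(\R)$ (which the excerpt records just before the lemma, and which is in fact independent of $Z$) extends to $m \in \bold M(\C)$ by analyticity of both sides in $m$, so $j(\bold k, Z_0) = \nu(\bold k)^{-1}$, which is the first assertion.

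For the bound $|j(k, E\sqrt{-1})| = 1$: the cocycle identity together with $k(E\sqrt{-1}) = E\sqrt{-1}$ shows that $k \mapsto j(k, E\sqrt{-1})$ is a continuous character of $K_\infty$, hence $k \mapsto |j(k, E\sqrt{-1})|$ is a continuous homomorphism from the compact group $K_\infty$ into $\R_{>0}$ and must be trivial. (Equivalently, the functional determinant $J(Z, k) = j(k,Z)^{-18}$ evaluated at the fixed point $E\sqrt{-1}$ has absolute value $1$ because $K_\infty$ is compact and preserves a neighborhood volume.) The only real obstacle is producing the Cayley element $c$ with the required conjugating property, but this is exactly the classical Hermitian-symmetric fact invoked from \cite{BB}, page 500; once it is in hand the rest is a formal cocycle manipulation.
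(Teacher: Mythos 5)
Your argument is correct in outline, but it is a genuinely different route from the paper's: the paper offers no argument at all here and simply quotes the statement from \cite{BB}, page 500, whereas you derive it from the two facts the text has already recorded (the cocycle identity and $j(m,Z)=\nu(m)^{-1}$ for $m\in\bold M$) by transporting the computation through the Cayley element. The conjugation identity $j(k,E\sqrt{-1})=j(\bold k, Z_0)$ and the compactness argument for $|j(k,E\sqrt{-1})|=1$ are both sound, and the latter is a nice observation since it makes the second assertion independent of the first. What your write-up glosses over is that the cocycle identity is only stated in the paper for $\bold G(\R)$ acting on $\frak T$, while you apply it to $c,\bold k\in\bold G(\C)$ at the point $Z_0=c^{-1}(E\sqrt{-1})=0$, which is \emph{not} in $\frak T$ (it has $Y=0$, a boundary point; it is the fixed point of the linear $\bold M(\C)$-action). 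To make this honest one should note that $j$ is defined algebraically by the decomposition $g\,p_Z=p_{Z_1}\bold m\,p_A'$ with $j(g,Z)=\nu(\bold m)^{-1}$, which makes sense for $g\in\bold G(\C)$ and for any $Z$ in the open cell where the decomposition exists, and that both the cocycle identity and $j(m,Z)=\nu(m)^{-1}$ hold there by the same algebra; in particular $Z_0=0$ is admissible (one only needs $c\in\bold N(\C)\bold M(\C)\overline{\bold N}(\C)$, which a standard Cayley element satisfies), and the ``extension by analyticity'' to $\bold M(\C)$ is then automatic rather than an extra analytic step. With that caveat supplied, your proof buys a self-contained verification of a fact the paper imports as a black box; the paper's citation buys brevity and avoids having to set up $j$ on the compact dual.
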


So
$$
\det(Y_1)=\frac {\det(Y)}{|j(\gamma,Z)|},\quad j(k,E\sqrt{-1})=\frac {j(\gamma,Z)}{|j(\gamma,Z)|}.
$$

Therefore,
$$\Phi_\infty(\gamma g, s)=\nu(m')^{-s-9} \nu(\bold k)^{2l}=\det(Y)^{s+9}j(\gamma,Z)^{-2l}|j(\gamma,Z)|^{-s-9+2l}.
$$

Hence as in \cite{Ku}, for $\Phi(g, s)=\Phi_\infty(g, s)\otimes \otimes_p \Phi_p(g, s)$, $\Phi_\infty(g, s)=\nu(\bold k)^{2l}$, and $\Phi_p(g,s)=\Phi_p^0(g,s)$, the normalized spherical section for all $p$,

$$E(g,s,\Phi)=\det(Y)^{s+9} \sum_{\gamma\in \Gamma_\infty\backslash \Gamma} j(\gamma,Z)^{-2l}|j(\gamma,Z)|^{-s-9+2l}.
$$
Hence
$$E(g,s,\Phi)=\det(Y)^{s+9} E_{2l,s+9-2l}(Z)=j(g,E\sqrt{-1})^{-(s+9)}E_{2l,s+9-2l}(Z).
$$
Summing up, we have proved the following:
\begin{prop}\label{adelic-Eisen} The adelic Eisenstein series $E(g,2l-9,\Phi)$ on $\bold{G}(\A)$ which is associated to a standard section of  
$I(2l-9)$ corresponds to $E_{2l,0}(Z)$ via (\ref{auto-form}). 
\end{prop}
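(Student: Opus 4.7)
The plan is to specialize the identity
$$E(g,s,\Phi)=\det(Y)^{s+9} E_{2l,s+9-2l}(Z) = j(g,E\sqrt{-1})^{-(s+9)}E_{2l,s+9-2l}(Z),$$
just derived for $g=p_X m \in \bold P(\R)$ with $g(E\sqrt{-1})=Z$, at $s=2l-9$, and to match the result against the prescription (\ref{auto-form}). For this value $s+9=2l$, and the exponent $-s-9+2l$ attached to $|j(\gamma,Z)|$ in the underlying Eisenstein sum vanishes, so the absolute-value factors disappear and the sum collapses to $E_{2l,0}(Z)$. Consequently, for any $g_\infty\in \bold P(\R)$,
$$E(g_\infty, 2l-9, \Phi)=j(g_\infty, E\sqrt{-1})^{-2l} E_{2l,0}(g_\infty E\sqrt{-1}),$$
which is exactly the value that (\ref{auto-form}) assigns to $F=E_{2l,0}$, $k=2l$.

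To extend this from $\bold P(\R)$ to all of $\bold G(\A)$, I invoke the strong approximation theorem $\bold G(\A)=\bold G(\Q) \bold G(\R) K$ together with the Iwasawa decomposition $\bold G(\R)=\bold P(\R) K_\infty$. Both sides of the claimed equality are left $\bold G(\Q)$-invariant: the Eisenstein series by construction as a sum over $\bold P(\Q)\backslash \bold G(\Q)$, and the right-hand side of (\ref{auto-form}) by its definition on $\bold G(\Q)\bold G(\R)K$. Both are right-invariant under $K_p$ at every finite prime $p$: the Eisenstein side because $\Phi_p=\Phi_p^0$ is the normalized spherical section, and the classical side because the formula depends only on the archimedean component of $g$. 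Under right translation by $k_\infty\in K_\infty$, both sides pick up the same scalar $\nu(\bold k)^{2l}$: for the Eisenstein series this is the choice of $\Phi_\infty(\cdot,s)=\nu(\bold k)^{2l}$ on $K_\infty$, while for the right-hand side it follows from the cocycle identity for $j$ together with $j(k_\infty, E\sqrt{-1})=\nu(\bold k)^{-1}$ established in the lemma above. Agreement on $\bold P(\R)$ therefore forces agreement on all of $\bold G(\A)$, completing the argument.

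There is no substantive obstacle here; essentially all the computational content is already contained in the derivation of the displayed formula preceding the proposition, and the present statement is its cleanest specialization. The only care needed is in tracking the archimedean $K$-equivariance, which is handled uniformly on both sides by the identity $j(k, E\sqrt{-1})=\nu(\bold k)^{-1}$ recorded just above.
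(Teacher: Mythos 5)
Your proposal is correct and follows essentially the same route as the paper: the paper's proof is precisely the displayed identity $E(g,s,\Phi)=j(g,E\sqrt{-1})^{-(s+9)}E_{2l,s+9-2l}(Z)$ specialized at $s=2l-9$, where $s+9=2l$ and the exponent on $|j(\gamma,Z)|$ vanishes. Your added discussion of extending from $\bold P(\R)$ to $\bold G(\A)$ via strong approximation and $K$-equivariance merely makes explicit what the paper leaves implicit in invoking (\ref{auto-form}).
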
 

Let $I(s)=\otimes I_p(s)$ and $I_p(s)$ be the $p$-adic degenerate principal series. Then we have

\begin{prop}\label{W}$($\cite{W}$)$ $I_p(s)$ is irreducible except at $s=\pm 1, \pm 5, \pm 9$.
\end{prop}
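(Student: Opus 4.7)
The plan is to analyze $I_p(s)$ via the standard intertwining operator, exploiting the fact that $\bold P = \bold M \bold N$ is a maximal parabolic with abelian unipotent radical $\bold N \cong \frak J$ of dimension $27$, with Levi $\bold M$ (of type $GE_6$) acting on $\frak J$ by the representation used to define $\bold M$ itself. The pair $(\bold M, \frak J)$ is a prehomogeneous vector space whose orbits are stratified by the Jordan rank: rank $0, 1, 2, 3$, with the rank-$3$ stratum being the open orbit $R_3(K)$ defined by $\det(X) \neq 0$. This four-strata picture (analogous to the rank stratification for symmetric matrices in the Siegel case) is the structural input driving the entire computation.

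The first step is to consider the long intertwining operator
$$M(s)\Phi(g) = \int_{\bold N(\Q_p)} \Phi(w_0 n g, s)\, dn$$
and evaluate it on the normalized spherical section $\Phi_p^0$. By a Gindikin–Karpelevich / Sato–Shintani type calculation, the integral over $\frak J(\Q_p)$ decomposes according to the orbit stratification, and one obtains
$$M(s)\Phi_p^0 = c_p(s)\, \Phi_{p,-s}^0, \qquad c_p(s) = \prod_{j=1}^{3} \frac{\zeta_p(s - c_j)}{\zeta_p(s - c_j + 1)},$$
where the shifts $c_j$ are dictated by the codimensions of the rank strata and the Cayley dimension $d = 8$. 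Concretely one expects $c_j = 1 + (j-1) \cdot 4$, giving shifts $1, 5, 9$, and these values account precisely for the candidate reducibility points $s = \pm 1, \pm 5, \pm 9$ via the poles/zeros of the factor $c_p(s)$ and its counterpart $c_p(-s)$ appearing in $M(-s) M(s)$.

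The second step is to conclude irreducibility at all other $s$. For $s$ outside the locus where the $L$-factors in the normalization degenerate, the normalized intertwining operator $M^{*}(s)$ is holomorphic and non-vanishing on the spherical section, hence on all $K$-finite vectors by $\bold G(\Q_p)$-equivariance; thus $M^{*}(s)\colon I_p(s) \iso I_p(-s)$ is an isomorphism of admissible representations. Combined with the fact that $I_p(s)$ is generated by its spherical vector (since $\bold G$ is split and $\bold P$ is self-associate with respect to the longest Weyl element), a standard argument forces $I_p(s)$ to be irreducible: any proper invariant subspace would have to be preserved by $M^{*}(s)$ and would contradict the generic irreducibility of the family $\{I_p(s)\}_{s \in \C}$, which is a $\C[s]$-flat family of finite length.

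The main obstacle is the explicit evaluation of the orbital integrals over $\frak J(\Q_p)$ needed to pin down the shifts $c_j$ and to verify that no further cancellation occurs at the boundary points $\pm 1, \pm 5, \pm 9$; this requires detailed knowledge of the $GE_6$-orbit structure, of the relative invariants attached to each stratum, and of the local functional equation for the zeta function of the cubic form $\det$ on $\frak J(\Q_p)$. This is precisely the content of Weissman's analysis in \cite{W}, and once these local $L$-factors are identified, the reducibility and irreducibility assertions follow by the general framework above.
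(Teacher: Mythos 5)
The paper offers no proof of this proposition: it is quoted directly from Weissman \cite{W}, so there is nothing internal to compare your argument against. Judged on its own terms, your sketch has the right general shape --- the Kudla--Rallis strategy for degenerate principal series attached to a maximal parabolic with abelian nilradical, driven by the rank stratification of $(\bold M,\frak J)$ --- but it does not constitute a proof. The easy half of such arguments is reducibility at the listed points; the entire difficulty of the proposition is \emph{irreducibility at every other} $s$, and your argument for that step is invalid as written. Non-vanishing of $M^{*}(s)$ on the spherical vector does not propagate ``to all $K$-finite vectors by equivariance''; it only shows $M^{*}(s)$ is nonzero on the subrepresentation generated by the spherical vector. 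The usable criterion is that $I_p(s)$ is irreducible if and only if both $I_p(s)$ and $I_p(-s)$ are generated by their spherical vectors, and verifying \emph{that} at every $s\notin\{\pm 1,\pm 5,\pm 9\}$ is precisely the hard computation (via Jacquet modules, or by embedding $I_p(s)$ into the full unramified principal series and invoking Casselman's criterion); it is not automatic from $\bold G$ being split or $\bold P$ self-associate. Likewise, ``generic irreducibility of a flat family'' only gives irreducibility off some unspecified proper closed locus and says nothing about which $s$ lie in it --- identifying that locus is the whole content of the statement.

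Moreover, the one formula you commit to looks wrong: with shifts $c_j=1,5,9$ your $c$-function has denominator $\zeta_p(s)\zeta_p(s-4)\zeta_p(s-8)$, whereas consistency with Karel's constant $C_{2l}=2^{15}\prod_{n=0}^{2}(2l-4n)/B_{2l-4n}$ in Theorem 6.1 and with the functional equation $E(g,s,\Phi)=E(g,-s,M(s)\Phi)$ forces a local factor of the shape $\zeta_p(s-1)\zeta_p(s-5)\zeta_p(s-9)/\bigl(\zeta_p(s+1)\zeta_p(s+5)\zeta_p(s+9)\bigr)$. Finally, your closing paragraph concedes that the orbital-integral evaluations and the boundary analysis ``are precisely the content of Weissman's analysis in \cite{W}'' --- that is, you end by deferring to the very reference the paper cites, so what you have is a plausible roadmap rather than a proof. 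It is also worth noting that Weissman's actual method differs from your roadmap: he analyzes the constituents of $I_p(s)$ via the Fourier-Jacobi functor along the Heisenberg parabolic $\bold Q$, not via intertwining operators and the prehomogeneous orbit stratification.
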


\begin{remark} In terms of representation theory, the singular modular forms of weight 4 and 8 constructed in \cite{kim} are subrepresentations of $I(s)$ when $s=-5, -1$, resp.
\end{remark}

\section{Fourier-Jacobi expansion of Eisenstein series on $E_{7,3}$}
As seen in Section \ref{weil-rep} (see Lemma \ref{product} and Lemma \ref{FJC}), 
for each $S\in \frak J(\Z)_{+}$, the $S$-th Fourier-Jacobi coefficient of 
a modular form $F$ on $\frak T$ is represented by the sum of the products of theta series and elliptic modular forms. 
In this section we shall prove these elliptic modular forms turn to be Eisenstein series on $\mathbb{H}$ if 
$F$ is an Eisenstein series. To do this we generalize the argument in Section 3 of \cite{Ik3} in our setting and 
by virtue of Lemma \ref{relation} this enable us to work on the adelic setting which is much simpler than the classical setting.  

Let $\omega$ be a unitary character of $\Q^\times\backslash\A^\times$ and $s\in \C$. 
Let $\mathbb{K}=SL_2(\widehat{\Z})\times SO(2)$ be the standard maximal compact subgroup of $SL_2(\A)$. 
We denote by $I(\omega,s)$, the degenerate principal series representation of $\bold{G}(\A)$ consisting of any  
function $f:\bold{G}(\A)\lra \C$ such that 
$$f(pg)=\delta^{\frac{1}{2}}_{\bold P}(p)|\nu(p)|^s_\A\omega(\nu(p))f(g)$$
 for any  $p\in \bold P(\A)$ and any  $g\in \bold{G}(\A)$. Recall that $\delta^{\frac{1}{2}}_P(mn)=|\nu(m)|^9_\A$. 
Similarly we also define  the space $I_1(\omega,s)$  consisting of any smooth, $\mathbb{K}$-finite 
function $f:{\rm SL}_2(\A)\lra \C$ such that 
$$f(pg)=\delta^{\frac{1}{2}}_B(p)|a|^s_{\A}\omega(\nu(p))f(g)$$
 for any  $p=
\left(\begin{array}{cc}
a & b \\
0 & a^{-1} 
\end{array}
\right)\in B(\A)$ and any  $g\in {\rm SL}_2(\A)$. 
Here $B$ is the Borel subgroup of ${\rm SL}_2$ which consists of upper-triangular matrices and 
$\delta^{\frac{1}{2}}_B(p)=|a|_\A$ for $p=
\left(\begin{array}{cc}
a & b \\
0 & a^{-1} 
\end{array}
\right)\in B(\A)
$.    
For any section $f\in I(\omega,s)$, we define the Eisenstein series on $\bold{G}(\A)$ of type $(\omega,s)$ by 
$$E(g;f):=\sum_{\bold P(\Q)\ba \bold{G}(\Q)}f(\gamma g),\ g\in \bold{G}(\A).$$
Let $\psi$ be a non-trivial additive character of $\Q\ba\A$ and 
for $S\in \frak J_2(\Z)_{+}$, put $\psi_S=\psi\circ {\rm tr}_S:Z(\A)\lra \C$. 
Consider the $S$-th Fourier-Jacobi coefficient $E_S(v_1h;f)$ of $E_S(g;f)$ with respect to $\psi_S$ (see Definition \ref{adelic-FJC}). 
For each $\varphi\in \mathcal{S}(X(\A))$, put 
\begin{equation}\label{Fourier-Jacobi}
E_{\psi_S,\varphi}(h):=\int_{\bold V(\Q)\backslash \bold V(\A)} E_S(v_1h,f)\overline{\Theta^{\psi_S}(v_1h;\varphi)}\, dv_1,\ h\in {\rm SL}_2(\A).
\end{equation}
The main purpose in this section is to prove the following key theorem:
\begin{theorem}\label{key-thm}Keep the notation above. Assume that $\varphi$ is $\mathbb{K}$-finite, hence 
the $\C$-span $\langle  \omega_S(k)\varphi\ |\ k\in \mathbb{K} \rangle_\C$ is finite-dimensional. 
For ${\rm Re}(s)\gg 0$,   
\begin{enumerate}
\item $R(h;f,\varphi):=\ds\int_{\bold V(\A)}f(\iota\cdot v_1\cdot \iota_{e_3}\cdot h)\overline{\omega_S(v_1(\iota_{e_3}\cdot h))
\varphi(0)}dv_1$ is a section of $I(\omega,s)$, 
\item $E_{\psi_S,\varphi}$ is an Eisenstein series on $SL_2(\A)$ associated to $R(h;f,\varphi)$.
\end{enumerate}
\end{theorem}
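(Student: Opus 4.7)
The plan is to adapt Ikeda's strategy from \cite{Ik3}, Section 3, to our exceptional setting. First I would unfold the Siegel Eisenstein series inside the $S$-th Fourier--Jacobi coefficient. For this, decompose $\bold P(\Q)\backslash \bold G(\Q)$ into orbits under $J(\Q) = \bold V(\Q)\bold H(\Q)$ via the Bruhat decomposition of $\bold G$ relative to $\bold P$, and identify which orbits contribute to the period against $\psi_S^{-1}$. For $S$ non-degenerate, I expect every orbit except the open one to give a vanishing contribution: on a smaller cell the character $\psi_S$ restricts non-trivially to a unipotent subgroup lying inside $\bold P \cap \gamma J \gamma^{-1}$, so the inner $Z(\Q)\backslash Z(\A)$-integral vanishes by orthogonality. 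The open orbit should admit a representative of the form $\iota \cdot (\text{element of } J)$ involving both the long Weyl element $\iota$ and $\iota_{e_3}$, with stabilizer in $J(\Q)$ equal to a conjugate of the lower Borel of $\bold H(\Q) \simeq SL_2(\Q)$, which is what produces the outer sum over $B(\Q)\backslash SL_2(\Q)$ in the final formula.

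Granted absolute convergence for $\mathrm{Re}(s)\gg 0$ (which follows from convergence of the Siegel Eisenstein series together with the $\mathbb{K}$-finiteness of $\varphi$), I would exchange sum and integral to obtain a provisional formula
$$
E_S(v_1 h; f) = \sum_{\gamma \in B(\Q)\backslash SL_2(\Q)} \int_{Z(\A)} \sum_{v_1' \in \bold V(\Q)} f(\iota\, v_1'\, z\, v_1\, \iota_{e_3}\gamma h)\, \psi_S^{-1}(z)\, dz.
$$
Substituting into
$$
E_{\psi_S,\varphi}(h) = \int_{\bold V(\Q)\backslash \bold V(\A)} E_S(v_1 h; f)\, \overline{\Theta^{\psi_S}(v_1 h;\varphi)}\, dv_1,
$$
unfolding the theta sum as $\Theta^{\psi_S}(v_1 h;\varphi) = \sum_{\xi\in X(\Q)} \omega_S(v_1 h)\varphi(\xi)$, and collapsing the $\bold V(\Q)$-sum against the fundamental-domain integral by a standard telescoping argument should produce
$$
E_{\psi_S,\varphi}(h) = \sum_{\gamma \in B(\Q)\backslash SL_2(\Q)} R(\gamma h; f, \varphi),
$$
which is precisely the Eisenstein series on $SL_2(\A)$ attached to the section $R(\cdot; f, \varphi)$, proving (2).

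For part (1), I would verify the covariance of $R(h;f,\varphi)$ on $SL_2(\A)$ directly. Given $p \in B(\A) \subset \bold H(\A)$, the change of variable $v_1 \mapsto (\iota_{e_3} p \iota_{e_3}^{-1}) v_1 (\iota_{e_3} p \iota_{e_3}^{-1})^{-1}$ in $\bold V(\A)$, combined with the fact that $\iota_{e_3} B \iota_{e_3}^{-1}$ is the opposite Borel of $\bold H$ and a Bruhat push-through of $\iota \cdot (\iota_{e_3} p \iota_{e_3}^{-1})$ into an element of $\bold P(\A) \cdot \iota$, reduces the problem to applying the $(\omega,s)$-transformation law of $f\in I(\omega,s)$ together with the explicit Schr\"odinger-model formulas for $\omega_S(h(a))$ (featuring the factor $|a|_{\A}^{8}$) and $\omega_S(n(b))$. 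Tracking the resulting exponents should yield $R(ph;f,\varphi) = \delta_B(p)^{1/2}|a|_{\A}^{s}\omega(a) R(h;f,\varphi)$, so that $R(\cdot;f,\varphi)$ lies in the desired principal series on $SL_2(\A)$.

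The main obstacle is the orbit analysis on $\bold P(\Q)\backslash \bold G(\Q)/J(\Q)$: neither the Bruhat decomposition of $\bold G$ relative to the Siegel parabolic nor its refinement by the Jacobi parabolic $\bold Q = \bold L \bold V$ is available off the shelf, and one must exploit the root-subgroup description of Section \ref{E7} to pin down the open orbit, to compute its stabilizer, and to confirm that $\psi_S$ restricts non-trivially to the relevant unipotent radicals on every non-open cell. Once this geometric input is in place, the remaining bookkeeping parallels the symplectic case closely.
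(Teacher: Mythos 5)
Your plan is essentially the paper's proof (both follow Ikeda \cite{Ik3}, Section 3): the paper carries out your orbit analysis via the double coset $P\backslash G/Q$ with $\bold Q=\bold L\bold V$, which reduces to $W_P\backslash W_G/W_Q$ with the three minimal-length representatives $1$, $c_3(2\,3)$, $c_2c_3(1\,3)$ taken from Carter, kills the two non-open cells exactly by your argument that $\psi_S$ is nontrivial on $\gamma^{-1}\bold P(\A)\gamma\cap Z(\A)$, and writes the open cell as $\xi_0\cdot(Y(\Q)\backslash\bold V(\Q))\cdot(P_H(\Q)\backslash H(\Q))$, after which the residual $Y(\Q)$-sum is absorbed by Poisson summation for the theta function and the covariance of $R$ is checked exactly as you describe via the $|a|^8$ factor and the Jacobian $|a|^{-16}$. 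The only refinement worth noting is that the stabilizer of the open orbit is not just a Borel of $\bold H(\Q)$ but also contains the $Y(\Q)$-part of the Heisenberg group, which is precisely what the theta function's $\iota_{e_3}$-invariance (the lattice-model Poisson summation) is needed to collapse.
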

To prove this, we need some lemmas: Let $P=\bold P(\Q), G=\bold G(\Q), Q=\bold Q(\Q)$. Note that $Q$ is the normalizer of $\bold V(\Q)$ in $G$.
The double coset $P\backslash G/Q$ is bijective to the double coset of the Weyl group $W_P\backslash W_G/W_Q$. By \cite{Ca}, page 64,
each double coset of $W_P\backslash W_G/W_Q$ has unique element of minimal length, and they are
$\{1, c_3 (2 3), c_2c_3 (1 3)\}$, where $c_i$ is the Weyl group element attached to $2e_i$, and $(i j)$ is the Weyl group element attached to $e_i-e_j$. Then $G=P\xi_2 Q\cup P\xi_1 Q\cup P\xi_0 Q$, and $P\xi_0 Q$ is the unique open cell, where $\xi_2=1$, $\xi_1=c_3 (2 3)$, and $\xi_0=c_2c_3 (1 3)$. 
In terms of the notation in \cite{B}, page 517, 
$\xi_2=1$, $\xi_1=\iota_{e_3}\varphi_{23}$, and $\xi_0=\iota_{e_2}\iota_{e_3}\varphi_{13}$, where 
$\varphi_{ij}=m_{e_{ij}}m_{-e_{ji}}m_{e_{ij}}$ for $i\ne j$.

\begin{lemma}\label{lem1} For any $q\in Q$, $q$ normalizes $Z(\A)$, and if $\gamma\in G$ is not contained in the open cell $P\xi_0 Q$, then $\psi_S$ is non-trivial on $\gamma^{-1}\bold P(\A)\gamma\cap Z(\A)$.
\end{lemma}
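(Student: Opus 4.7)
The plan for the first assertion is immediate: $\bold Q$ is by construction the normalizer of $\bold V$, and $Z$ is the center of $\bold V$, so every $q\in Q$ automatically normalizes $Z$.

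For the second assertion, I will write any $\gamma\notin P\xi_0 Q$ in the form $\gamma=p\xi q$ with $p\in P$, $q\in Q$, and $\xi\in\{\xi_2=1,\,\xi_1=c_3(23)\}$. Using the first part to commute the intersection past the $q$-conjugation, one obtains
\[
\gamma^{-1}\bold P(\A)\gamma\cap Z(\A)=q^{-1}\bigl(\xi^{-1}\bold P(\A)\xi\cap Z(\A)\bigr)q,
\]
so that for $z=q^{-1}z_0 q$ in this group, $\psi_S(z)=\psi_{S'}(z_0)$ for some $S'\in\frak J_2$ obtained from $S$ via the contragredient of the conjugation action of $q$ on $Z$. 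Since $\bold V$ acts trivially on its own center, this action factors through the Levi $\bold L$; by equation~(\ref{h2}) the $\bold H$-factor of $\bold L'$ acts trivially on $Z$, while the $Spin(9,1)$-factor acts via the vector representation preserving $\det$ on $\frak J_2$ (which has signature $(1,9)$), and the one-dimensional center of $\bold L$ rescales by a nonzero scalar. In every case $\det(S')\ne 0$.

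Next I will determine $\xi^{-1}P\xi\cap Z$ by tracking how $\xi$ permutes the roots $\{2e_1,\,2e_2,\,e_1+e_2\}$ of $Z$ and asking whether each image is a root of $P$ (either a Levi root $\pm(e_i-e_j)$ or a positive non-compact root in $N$). For $\xi_2=1$ the whole of $Z$ lies in $P$, and $\psi_{S'}$ is non-trivial on $Z(\A)$ simply because $S'\ne 0$. For $\xi_1=c_3(23)$, the induced permutation sends $(e_1,e_2,e_3)\mapsto(e_1,-e_3,e_2)$, so $e_1+e_2\mapsto e_1-e_3$ (a positive Levi root) and $2e_1\mapsto 2e_1\in N$, whereas $2e_2\mapsto -2e_3\notin P$. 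Hence $\xi_1^{-1}P\xi_1\cap Z$ corresponds under $Z\simeq\frak J_2$ to the codimension-one subspace
\[
Z_0=\left\{\begin{pmatrix} a & w \\ \bar w & 0\end{pmatrix}:a\in\Q,\,w\in\frak C_\Q\right\}.
\]
If $\psi_{S'}$ were trivial on $Z_0(\A)$, the pairing on $\frak J_2$ would force $S'_{11}=0$ and $S'_{12}=0$, and then $\det(S')=-N(S'_{12})=0$, contradicting $\det(S')\ne 0$.

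The step requiring the most care is the verification that conjugation by $\bold L$ on $Z$ preserves $\det$ up to a nonzero scalar; once $Z$ is identified with the standard ten-dimensional quadratic space for $Spin(9,1)$, the rest reduces to the short Weyl-group computation above together with the elementary fact that no element of $\frak J_2$ of nonzero determinant is orthogonal to the codimension-one subspace $Z_0$.
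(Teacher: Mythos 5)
Your proof is correct and follows the same overall route as the paper's: decompose $P\backslash G/Q$ with representatives $\xi_2=1$, $\xi_1=c_3(23)$, $\xi_0$, reduce to $\gamma=\xi_1,\xi_2$, and then track which root subgroups of $Z$ are carried into $\bold P$. But you execute two steps more carefully than the paper does, and one of them genuinely matters. For the first assertion, the paper verifies generator by generator that $\bold L$ conjugates $Z(\A)$ into itself; your observation that $\bold Q$ normalizes $\bold V$ and hence its characteristic subgroup $Z$ is cleaner and equivalent. For the second assertion, the paper says only ``we may assume $\gamma=\xi_1,\xi_2$'' and, in the $\xi_1$ case, exhibits just the subgroup $\{p_{xe_{12}}\}=\bold U_{e_1+e_2}(\A)$ inside $\xi_1^{-1}\bold P(\A)\xi_1\cap Z(\A)$. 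Since $\psi_S(p_{xe_{12}})=\psi(\tfrac12\tr(u\bar x))$ for $S=\begin{pmatrix} a&u\\ \bar u&b\end{pmatrix}$, that subgroup lies in the kernel of $\psi_S$ whenever $S$ is diagonal, so the paper's stated reason is not by itself sufficient. Your version repairs this: because $\xi_1$ fixes $2e_1$ and sends $e_1+e_2$ to the Levi root $e_1-e_3$, the intersection contains the whole codimension-one subgroup $Z_0$, and triviality of $\psi_{S'}$ on $Z_0(\A)$ would force $S'_{11}=S'_{12}=0$ and hence $\det(S')=0$. Your explicit factorization $\gamma=p\xi q$ with the bookkeeping $S\mapsto S'$ is also the honest way to justify the paper's reduction.

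The one point you should flesh out is the claim $\det(S')\ne 0$. What you argue is that conjugation by $\bold L$ on $Z\simeq\frak J_2$ preserves $\det$ up to a nonzero scalar; but $S'$ is produced by the \emph{contragredient} of that action with respect to the trace pairing $(\,,\,)$, not with respect to the bilinear form polarizing $\det$. These two forms are not proportional (their signatures are $(10,0)$ and $(1,9)$), so the contragredient of a $\det$-similitude is not tautologically a $\det$-similitude. It is one, because the two forms differ by the adjugate involution
\begin{equation*}
\begin{pmatrix} a&x\\ \bar x&b\end{pmatrix}\longmapsto\begin{pmatrix} b&-x\\ -\bar x&a\end{pmatrix},
\end{equation*}
which itself lies in $O(\det)$; adding that remark closes the argument.
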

\begin{proof} Let $q=lv$ for $l\in \bold{L}(\Q)$ and $v\in \bold V(\Q)$, and $p_z\in Z(\A)$. Then since $Z$ is the center of $V$, 
$qp_zq^{-1}=(lv)p_z(lv)^{-1}=lp_zl^{-1}$.
If $l$ is in the central torus of $\bold{L}$, $lp_zl^{-1}=p_z$. Otherwise, $l\in \bold L'(\Q)$. Here $\bold L'=\bold H\times Spin(9,1)$, 
where $Spin(9,1)$ is spanned by the unipotent subgroups $m_{x e_{12}}$ and $m_{x e_{21}}$. 
If $l\in \bold H(\Q)$, by Lemma \ref{H}, $lp_zl^{-1}=p_z$.
Suppose $l=m_{xe_{12}}$. Then by Lemma \ref{parab},
$m_{xe_{12}}p_z=p_B m_{xe_{12}}$ for $B=m_{xe_{12}}z=p_{z'}\in Z(\A)$. Similarly, $m_{xe_{21}}p_z=p_{z"}\in Z(\A)$.
Hence we have proved $qZ(\A)q^{-1}=Z(\A)$.

We may assume that $\gamma=\xi_1, \xi_2$. If $\gamma=\xi_2=1$, $P(\A)\cap Z(\A)=Z(\A)$. So $\psi_S$ is not trivial on $\bold P(\A)\cap Z(\A)$.
Let $\gamma=\xi_1$. Using (\ref{iota}), we can compute easily that $\gamma^{-1} m_{\bar xe_{31}}\gamma= p_{xe_{12}}\in Z(\A)$. Hence $\gamma^{-1}\bold P(\A)\gamma\cap Z(\A)$ contains the subgroup $\{p_{xe_{12}}\ |\ x\in \frak C_{\A}\}$. So $\psi_S$ is not trivial on $\gamma^{-1}\bold P(\A)\gamma\cap Z(\A)$.
\end{proof}
Let $P_H$ be the Borel subgroup of $H$ consisting of upper triangular matrices. 
\begin{lemma}\label{lem2} The right coset can be written as 
$P\backslash P\xi_0 Q=\xi_0 \cdot (Y(\Q)\backslash \bold V(\Q))\cdot (P_H(\Q)\backslash H(\Q))$. 
\end{lemma}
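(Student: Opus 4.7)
The plan is to exploit the identification $P\backslash P\xi_0 Q\cong S\backslash Q$, where $S:=\xi_0^{-1}P\xi_0\cap Q$ is the stabilizer of the class $P\xi_0$ under the right action of $Q$. The claimed decomposition then amounts to describing $S\backslash Q$ in terms of cosets in the Heisenberg-type part $\bold V$ and the $\bold H$-part of the Levi of $\bold Q$.

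First I would identify $S$ by analyzing the $\xi_0$-action on the relative root system. The Weyl element $\xi_0=c_2 c_3 (1\,3)$ acts by $e_1\mapsto -e_3,\ e_2\mapsto -e_2,\ e_3\mapsto e_1$. A root subgroup $\bold U_\alpha$ of $\bold Q$ lies in $\xi_0^{-1}P\xi_0$ iff $\xi_0(\alpha)$ is a root of $\bold P$ (equivalently, $\pm(e_i-e_j)$ with $i<j$, or $e_i+e_j$ with $i\le j$). Checking the eleven roots of $\bold Q$ case by case, this holds precisely for $\alpha\in\{\pm(e_1-e_2),\ 2e_3,\ e_1+e_3,\ e_2+e_3\}$. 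Together with the maximal torus of $\bold L$, these root subgroups generate $S=L_1\cdot P_H\cdot Y$, where $L_1:=Z(\bold L)\cdot Spin(9,1)$ (arising from $\pm(e_1-e_2)$ and the central torus), $P_H=T_H\cdot \bold U_{2e_3}$, and $Y$ comes from $e_1+e_3,\ e_2+e_3$. That $P_H$ normalizes $Y$ is verified directly from the formula of Lemma~\ref{H} by setting $c=0$ and $x=0$, which leaves both the $\bold V$-coordinate in $Y$ and the $Z$-contribution trivial.

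For surjectivity of the proposed parameterization, given $q\in Q$ I would decompose $q=vl$ via $Q=\bold V\bold L$ and further $l=l_1 h$ with $l_1\in L_1,\ h\in \bold H$ (the overlap $\bold H\cap L_1$ being finite central and absorbed harmlessly into the choice of representatives). Since $\xi_0 l_1\xi_0^{-1}\in P$ by the root analysis above, we have $P\xi_0\cdot l_1=P\xi_0$, and pushing $v$ past $l_1$ via $v l_1 = l_1(l_1^{-1}vl_1)$ yields $P\xi_0 q=P\xi_0 v'h$ for some $v'\in\bold V$. For injectivity, fix left coset representatives $R_V\subset\bold V$ of $Y\backslash\bold V$ and $R_H\subset\bold H$ of $P_H\backslash\bold H$, and suppose $P\xi_0 v_1 h_1=P\xi_0 v_2 h_2$ with $(v_i,h_i)\in R_V\times R_H$. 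Then $v_1 h_1 h_2^{-1}v_2^{-1}\in S$; rewriting this element in $\bold L\cdot\bold V$-form extracts $\bold L$-part $h:=h_1 h_2^{-1}$ and $\bold V$-part $(h^{-1}v_1 h)v_2^{-1}$. Demanding membership in $L_1 P_H\cdot Y$ forces $h\in P_H$, hence $h_1=h_2$, and then the $\bold V$-part condition $v_1 v_2^{-1}\in Y$ forces $v_1=v_2$, giving the claimed bijection. The main obstacle is the root-theoretic computation pinning down $S$, since $\xi_0$ shuffles roots of $\bold L$ and of $\bold V$ in a nonuniform pattern and each of the eleven relative roots of $\bold Q$ must be handled individually; once $S=L_1\cdot P_H\cdot Y$ is in hand, the remainder is a standard Bruhat-style coset manipulation relying only on the Levi decomposition of $\bold Q$ and Lemma~\ref{H}.
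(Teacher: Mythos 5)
Your proof is correct and follows essentially the same route as the paper: the paper likewise identifies the central torus, $Spin(9,1)(\Q)$, $Y(\Q)$, and $P_H(\Q)$ as exactly the pieces of $\bold Q(\Q)$ that $\xi_0$ conjugates into $\bold P(\Q)$, only it verifies this by explicit conjugation formulas ($\xi_0 m_{xe_{12}}\xi_0^{-1}=m_{xe_{23}}$, $\xi_0 p_{ye_{13}}\xi_0^{-1}=m_{\bar y e_{31}}$, $\xi_0 p_{ae_3}\xi_0^{-1}=p_{ae_1}$, $\xi_0 h(a)\xi_0^{-1}\in\bold M'(\Q)$) rather than by the action of $\xi_0$ on the relative root system. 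Your root-by-root determination of $\xi_0^{-1}P\xi_0\cap Q$ is consistent with those computations, and your explicit surjectivity/injectivity argument fills in the coset manipulation that the paper leaves implicit.
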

\begin{proof} We can write $q\in Q$ as $q=slvh$ with $s$ in the central torus, $l\in Spin(9,1)(\Q)$, $v\in \bold V(\Q)$, and 
$h\in H(\Q)$.
It is easy to show that $\xi_0 l\xi_0^{-1}\in \bold{M}'(\Q)$, and $\xi_0 v(y) \xi_0^{-1}\in  \bold{M}'(\Q)$, and 
$\xi_0 p_{ae_3} \xi_0^{-1}=p_{ae_1}$;
By direct computation, $\xi_0 m_{x e_{21}}\xi_0^{-1}=m_{x e_{32}}$, and $\xi_0 m_{x e_{12}}\xi_0^{-1}=m_{x e_{23}}$.
And $\xi_0 p_{ye_{13}}\xi_0^{-1}=m_{\bar y e_{31}}$, and $\xi_0 p_{ye_{23}}\xi_0^{-1}=m_{\bar y e_{21}}$.
Note that $h(a)$ is identified with $p_{ae_3}p_{-a^{-1}e_3}'p_{ae_3}\iota_{e_3}^{-1}$. Hence 
$\xi_0 h(a)\xi_0^{-1}=p_{ae_1}p_{-a^{-1}e_1}'p_{ae_1}\iota_{e_1}^{-1}\in  \bold{M}'(\Q)$ giving the claim.
\end{proof}
We have in analogy to \cite{Ik3}, p 630:
\begin{lemma} \label{Ik-lem} 
\begin{enumerate} 
\item $\iota \cdot v_1(0,y,z) \iota_{e_3} p_{be_3}=p_{be_3} k\cdot \iota \cdot v_1(0,y,z+b y {}^t \bar{y}) \iota_{e_3}$ 
where $k=m_{by_1 e_{13}}m_{by_2 e_{23}}$ with $\nu(k)=1$.
\item $\iota\cdot v_1(0,y,z) \iota_{e_3} h(a)=h(a)\cdot \iota \cdot v_1(0,ay,z) \iota_{e_3}$
with $\nu(h(a))=a$. 
\item $\varphi_{13}\xi_0 \iota_{e_3}=\iota$ with $\nu(\varphi_{13})=1$ and $\iota^2_{e_3}=-1$.
\end{enumerate}
\end{lemma}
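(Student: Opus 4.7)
Each of the three identities is a direct computation in $\bold G$: move the rightmost factor leftward through every intervening one, collecting the conjugation corrections. The toolkit is already assembled in Section~\ref{E7}: the rules (\ref{iota}) for $\iota_{e_3}$, the relation $p'_B = \iota p_{-B}\iota^{-1}$ between $\bold N$ and $\overline{\bold N}$, the multiplication (\ref{mult2}) in $v_1$-coordinates together with the change of variables $v_1(x,y,z) = v(x,y,z-\sigma(x,y))$, Lemma~\ref{H} for the $\bold H$-action, and the involution $g\mapsto g^*$ on $\bold M$ satisfying $\iota g\iota^{-1} = g^*$ and $m_{xe_{ij}}^* = m_{-\bar x e_{ji}}$.

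For~(1) I would first replace $\iota_{e_3}p_{be_3}$ by $p'_{-be_3}\iota_{e_3}$ via $p'_{ce_3} = \iota_{e_3}p_{-ce_3}\iota_{e_3}^{-1}$ with $c=-b$, then push $p'_{-be_3}$ through $v_1(0,y,z)$ by applying ${p'_{ce_3}}^{-1}v(x,y,z)p'_{ce_3} = v(x+cy,y,z-cy\,{}^t\bar y)$ and converting to $v_1$-coordinates; since $b\in K$ lies in the nucleus of $\frak C_K$ one has $\sigma(-by,y) = -2by\,{}^t\bar y$, which yields $v_1(0,y,z)p'_{-be_3} = p'_{-be_3}v_1(-by,y,z-by\,{}^t\bar y)$. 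Next, move $p'_{-be_3}$ past $\iota$ using $\iota p'_{-be_3} = p_{be_3}\iota$, which follows from $p'_B = \iota p_{-B}\iota^{-1}$ together with $\iota^2 = -1$ (read off immediately from $\iota(X,\xi,X',\xi') = (-X',-\xi',X,\xi)$). Finally, factor $v_1(-by,y,z-by\,{}^t\bar y) = v_1(-by,0,0)\cdot v_1(0,y,z+by\,{}^t\bar y)$ by (\ref{mult2}), and apply the $*$-involution to $v_1(-by,0,0) = m_{-b\bar y_1 e_{31}}m_{-b\bar y_2 e_{32}}$: the formula $m_{xe_{ij}}^* = m_{-\bar x e_{ji}}$ produces exactly $k = m_{by_1 e_{13}}m_{by_2 e_{23}}$, and $\nu(k)=1$ is automatic since $k$ is a product of root elements in $\bold M'$.

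Part~(2) follows the same template more simply: $\iota_{e_3}h(a) = h(a^{-1})\iota_{e_3}$ is the standard Weyl identity in $\bold H\simeq SL_2$; $v_1(0,y,z)h(a^{-1}) = h(a^{-1})v_1(0,ay,z)$ is a direct application of Lemma~\ref{H}; and $\iota h(a^{-1}) = h(a)\iota$ follows from $\iota g\iota^{-1} = g^*$ after one brief check from $(h(a)X,h(a)^*Y) = (X,Y)$ that $h(a^{-1})^* = h(a)$. Part~(3) is a manipulation among Weyl-element lifts: starting from $\xi_0 = \iota_{e_2}\iota_{e_3}\varphi_{13}$ (stated just before Lemma~\ref{lem1}), I would use that $\varphi_{13}$ commutes with $\iota_{e_2}$ (since it lies in the $e_1$-$e_3$ subsystem), that $\iota_{e_1}$ and $\iota_{e_2}$ mutually commute (orthogonal long roots), and $\iota_{e_1} = \varphi_{13}\iota_{e_3}\varphi_{13}^{-1}$ from Lemma~\ref{generator}, to rewrite $\varphi_{13}\xi_0\iota_{e_3}$ as $\iota_{e_1}\iota_{e_2}\iota_{e_3}$ multiplied by $\varphi_{13}^2\iota_{e_3}^2$. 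Direct computation in the relevant $SL_2$-copies gives $\varphi_{13}^2 = \iota_{e_3}^2 = -1$, whose contributions cancel to leave $\iota$; and $\nu(\varphi_{13})=1$ holds because $\varphi_{13}\in\bold M'$.

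The main obstacle is purely bookkeeping: the identities are asserted literally in $\bold G$ and not merely modulo its center, so one must track the central signs $\iota^2 = \iota_{e_3}^2 = \varphi_{13}^2 = -1$ coming from the Weyl-element lifts, together with the $\sigma(x,y)$-corrections produced by switching between the $v$- and $v_1$-coordinate systems. The octonionic aspects are mild because $b\in K$ lies in the nucleus of $\frak C_K$, so all scalar products with $b$ associate freely and simplify as in an associative algebra.
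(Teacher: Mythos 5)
Your arguments for parts (1) and (2) are correct and are essentially the paper's own (very terse) proof written out in full: the paper likewise reduces (1) to $\iota_{e_3}p_{be_3}=p'_{-be_3}\iota_{e_3}$, the conjugation formula ${p'_{ce_3}}^{-1}v(x,y,z)p'_{ce_3}=v(x+cy,y,z-cy\,{}^t\bar y)$, and $\iota\, m_{\bar xe_{3i}}\,\iota^{-1}=m_{-xe_{i3}}$ (which is your $*$-involution step), and reduces (2) to $\iota_{e_3}h(a)\iota_{e_3}^{-1}=h(a^{-1})$, Lemma~\ref{H}, and $\iota h(a^{-1})\iota^{-1}=h(a)$. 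Your bookkeeping of the $\sigma$-corrections in (1) ($\sigma(-by,y)=-2by\,{}^t\bar y$, hence $v_1(-by,y,z-by\,{}^t\bar y)=v_1(-by,0,0)v_1(0,y,z+by\,{}^t\bar y)$) checks out.

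Part (3), however, has a genuine error. First, the algebra you describe does not produce the factor $\iota_{e_3}^2$: writing $\varphi_{13}\xi_0\iota_{e_3}=\iota_{e_2}\bigl(\varphi_{13}\iota_{e_3}\varphi_{13}^{-1}\bigr)\varphi_{13}^{2}\iota_{e_3}$ and commuting $\varphi_{13}^{2}$ past $\iota_{e_3}$, the leftover is exactly $\varphi_{13}^{2}$, nothing more. Second, and more seriously, $\varphi_{13}^{2}$ and $\iota_{e_3}^{2}$ are \emph{not} the central element $-I_{\bold W}$ of $\bold G$: they are the ``$-1$''s of two different embedded $SL_2$'s, i.e.\ the semisimple elements $h_{e_1-e_3}(-1)$ and $h_{2e_3}(-1)$. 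The first acts on $\frak J$ by $X\mapsto dXd$ with $d=\mathrm{diag}(-1,1,-1)$ (it fixes the diagonal entries), the second only flips the $e_3$-component; neither equals $-\mathrm{id}$ on $\bold W$, they are not equal to each other, and their ``contributions'' cannot cancel as if they were scalars. (Contrast this with your use of $\iota^2=-1$ in part (1), which \emph{is} legitimate because $\iota^2=-I_{\bold W}$ is genuinely central.) The statement $\iota_{e_3}^2=-1$ in the lemma is only an identity inside $\bold H\simeq SL_2$. The paper's intended route for (3) is to use the identity $\iota_{e_1}=\varphi_{13}\iota_{e_3}\varphi_{13}$ (with \emph{no} inverse on the second factor), which gives $\varphi_{13}\iota_{e_2}\iota_{e_3}\varphi_{13}\iota_{e_3}=\iota_{e_2}(\varphi_{13}\iota_{e_3}\varphi_{13})\iota_{e_3}=\iota_{e_2}\iota_{e_1}\iota_{e_3}=\iota$ with no leftover; the version $\iota_{e_1}=\varphi_{13}\iota_{e_3}\varphi_{13}^{-1}$ quoted in the proof of Lemma~\ref{generator} differs from it by precisely the non-central element $\varphi_{13}^{2}=h_{e_1-e_3}(-1)$, so the two cannot both be literally true and you must pin down (by direct computation of the action on $\bold W$) which normalization holds, rather than importing the Lemma~\ref{generator} version and patching with central signs.
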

\begin{proof} Note that $v(0,y,z)=v_1(0,y,z)$; (2) is straightforward by using (1), and
$\iota_{e_i} h(a)\iota_{e_i}^{-1}=h(a)$ for $i=1,2$, and $\iota_{e_3} h(a)\iota_{e_3}^{-1}=h(a^{-1})$; For (1), use $\iota_{e_i} p_{be_3}\iota_{e_i}^{-1}=p_{be_3}$ for $i=1,2$, and
$\iota\cdot m_{\bar x e_{3i}} \cdot \iota^{-1}=m_{-x e_{i3}}$ for $i=1,2$; (3) follows from the fact that $\varphi_{13}\in\bold M'$, and $\iota_{e_1}=\varphi_{13}\iota_{e_3}\varphi_{13}$.
\end{proof} 

\noindent{\it Proof of Theorem \ref{key-thm}}. We first prove (2), namely,
$$E_{\psi_S,\varphi}(h)=\sum_{\gamma\in P_H(\Q)\ba H(\Q)}R(\gamma h;f,\varphi).
$$
This series will be convergent for Re$(s)\gg 0$ provided if the first assertion holds (\cite{langlands}). 
In fact, one has 
$$
\begin{array}{ll}
&E_{\psi_S,\varphi}(h)=\ds\int_{\bold V(\Q)\backslash \bold V(\A)} E_S(v_1h,f)\overline{\Theta^{\psi_S}(v_1h;\varphi)}\, dv_1 =\ds\int_{\bold V(\Q)\backslash \bold V(\A)} E(v_1h,f)\overline{\Theta^{\psi_S}(v_1h;\varphi)}\, dv_1 \\
&=\ds\sum_{i=1,2}\sum_{\gamma\in P\ba P\xi_i Q}\ds\int_{\bold V(\Q)\backslash \bold V(\A)}f(\gamma v_1h)
\overline{\Theta^{\psi_S}(v_1h;\varphi)}\, dv_1 +\ds\sum_{\gamma\in P\ba P\xi_0 Q}\ds\int_{\bold V(\Q)\backslash \bold V(\A)}f(\gamma v_1h)
\overline{\Theta^{\psi_S}(v_1h;\varphi)}\, dv_1. 
\end{array}
$$
In the first integral above, by Lemma \ref{lem1}, there exists an element 
$z_0=\gamma^{-1}p\gamma\in Z(\A)\cap \gamma^{-1}P(\A)\gamma$ such that $\psi_S(z_0)\not=1$. Clearly $\nu(p)=1$.  
Then one has 
$$
\begin{array}{ll}
&\ds\int_{\bold V(\Q)\backslash \bold V(\A)}f(\gamma v_1h)
\overline{\Theta^{\psi_S}(v_1h;\varphi)}\, dv_1 = \ds\int_{\bold V(\Q)\backslash \bold V(\A)}f(\gamma z_0v_1h)
\overline{\Theta^{\psi_S}(z_0v_1h;\varphi)}\, d(z_0v_1)\\
&=\overline{\psi_S(z_0)}\ds\int_{\bold V(\Q)\backslash \bold V(\A)}f(p\gamma v_1h)
\overline{\Theta^{\psi_S}(v_1h;\varphi)}\, dv_1 = \overline{\psi_S(z_0)}\ds\int_{\bold V(\Q)\backslash \bold V(\A)}f(\gamma v_1h)
\overline{\Theta^{\psi_S}(v_1h;\varphi)}\, dv_1,
\end{array}
$$
which claims the vanishing of $\ds\int_{\bold V(\Q)\backslash \bold V(\A)}f(\gamma v_1h)
\overline{\Theta^{\psi_S}(v_1h;\varphi)}\, dv_1$. 
By Lemma \ref{lem2}, 
$$
\begin{array}{ll}
E_{\psi_S,\varphi}(h)&=\ds\sum_{\gamma_1\in Y(\Q)\ba \bold V(\Q)}
\sum_{\gamma\in P_H(\Q)\ba H(\Q)}\ds\int_{\bold V(\Q)\backslash \bold V(\A)}f(\xi_0\gamma_1 \gamma v_1h)
\overline{\Theta^{\psi_S}(v_1h;\varphi)}\, dv_1 \\
&=\ds\sum_{\gamma_1\in Y(\Q)\ba \bold V(\Q)}
\sum_{\gamma\in P_H(\Q)\ba H(\Q)}\ds\int_{\bold V(\Q)\backslash \bold V(\A)}f(\xi_0\gamma_1 \gamma v_1h)
\overline{\Theta^{\psi_S}(v_1h;\varphi)}\, dv_1 \\
&({\rm transform }\ v_1\ {\rm into}\ \gamma^{-1} v_1 \gamma)\\
&=\ds\sum_{\gamma_1\in Y(\Q)\ba \bold V(\Q)}
\sum_{\gamma\in P_H(\Q)\ba H(\Q)}\ds\int_{\bold V(\Q)\backslash \bold V(\A)}f(\xi_0\gamma_1 v_1\gamma h)
\overline{\Theta^{\psi_S}((\gamma^{-1} v_1 \gamma) h;\varphi)}\, dv_1 \\
& ( {\rm use }\ J(\Q){\rm -invariance\ of}\  \Theta^{\psi_S})\\
&=\ds\sum_{\gamma_1\in Y(\Q)\ba \bold V(\Q)}
\sum_{\gamma\in P_H(\Q)\ba H(\Q)}\ds\int_{\bold V(\Q)\backslash \bold V(\A)}f(\xi_0\gamma_1 v_1\gamma h)
\overline{\Theta^{\psi_S}(\gamma_1 v_1 (\gamma h);\varphi)}\, dv_1 \\
&=\ds\sum_{\gamma\in P_H(\Q)\ba H(\Q)}\ds\int_{Y(\Q)\backslash \bold V(\A)}f(\xi_0 v_1\gamma h)
\overline{\Theta^{\psi_S}(v_1 (\gamma h);\varphi)}\, dv_1 \\
&({\rm Poisson\ summation\ formula\ (\ref{poisson})}) \\
&=\ds\sum_{\gamma\in P_H(\Q)\ba H(\Q)}\ds\int_{Y(\Q)\backslash \bold V(\A)}f(\xi_0 v_1\gamma h)
\sum_{\ell\in Y(\Q)}\overline{F_S(\omega_S((-\ell)v_1\gamma h)\varphi(0))}\, dv_1. 
\end{array}
$$
Transforming $v_1$ into $(-\ell)^{-1}v_1$, one has $f(\xi_0(-\ell)^{-1}v_1\gamma h)=f(\xi_0\gamma h)$, since 
$\xi_0$ and $\ell$ (hence $(-\ell)^{-1}$) are commutative up to the multiplication by an element of $P(\Q)$, and $|\nu(P(\Q))|_{\A}=1$.   
Hence 
$$
\begin{array}{ll}
E_{\psi_S,\varphi}(h)&=\ds\sum_{\gamma\in P_H(\Q)\ba H(\Q)}\ds\int_{Y(\Q)\backslash \bold V(\A)}f(\xi_0 v_1\gamma h)
\sum_{\ell\in Y(\Q)}\overline{F_S(\omega_S(v_1\gamma h)\varphi(0))}\, dv_1 \\
&=\ds\sum_{\gamma\in P_H(\Q)\ba H(\Q)}\ds\int_{ \bold V(\A)}f(\xi_0 v_1\gamma h)
\overline{F_S(\omega_S(v_1\gamma h)\varphi(0))}\, dv_1 \\
&=\ds\sum_{\gamma\in P_H(\Q)\ba H(\Q)}\ds\int_{ \bold V(\A)}f(\xi_0 v_1(x,y,z)\gamma h)
\overline{\omega_S(\iota_{e_3}v_1(-x,y,z)\gamma h)\varphi(0)}\, dv_1(x,y,z). 
\end{array}
$$
It is easy to see that $\xi_0$ commutes with $X(\A)$ (see the proof of Lemma \ref{lem2}), $v_1(2x,0,0)v_1(-x,y,z)=v_1(x,y,z)$, and $\nu(X(\A))=1$. Hence 
$$
\begin{array}{ll}
E_{\psi_S,\varphi}(h)&=\ds\sum_{\gamma\in P_H(\Q)\ba H(\Q)}\ds\int_{ \bold V(\A)}f(\xi_0 v_1\gamma h)
\overline{\omega_S(\iota_{e_3}v_1\gamma h)\varphi(0)}\, dv_1 \\
&({\rm transform }\ v_1\ {\rm into}\ {\iota_{e_3}}^{-1} v_1 \iota_{e_3})\\
&=\ds\sum_{\gamma\in P_H(\Q)\ba H(\Q)}\ds\int_{ \bold V(\A)}f(\xi_0 \iota^{-1}_{e_3} v_1 \iota_{e_3}\gamma h)
\overline{\omega_S(v_1\iota_{e_3}\gamma h)\varphi(0)}\, dv_1.
\end{array}
$$
By Lemma \ref{Ik-lem} (3),
$$
E_{\psi_S,\varphi}(h)=\ds\sum_{\gamma\in P_H(\Q)\ba H(\Q)}\ds\int_{ \bold V(\A)}f(\iota v_1 \iota_{e_3}\gamma h)
\overline{\omega_S(v_1\iota_{e_3}\gamma h)\varphi(0)}\, dv_1
=\ds\sum_{\gamma\in P_H(\Q)\ba H(\Q)}R(\gamma h;f,\varphi). 
$$

We now prove (1). Noting that 
$$\iota\cdot v_1(x,y,z)=\iota\cdot v_1(x,0,0)\iota^{-1}\cdot \iota\cdot v_1(0,y,z)=v_1(0,-\bar{x},0)\cdot \iota\cdot v_1(0,y,z),
$$
and $\nu(v_1(0,-\bar{x},0))=1$, one has 
$$
\begin{array}{ll}
& R( h;f,\varphi)\\
&=\ds\int_{ \bold V(\A)}f(\iota\cdot v_1(x,y,z) \iota_{e_3} h)
\overline{\omega_S(\iota_{e_3} h)\varphi(x)\psi_S(\frac{1}{2}(S,z)+\lambda_S(x,y))}\, dv_1 \\
&=\ds\int_{ X(\A)}\int_{ Y(\A)}\int_{ Z(\A)}f(\iota\cdot v_1(0,y,z) \iota_{e_3} h)
\overline{\omega_S(\iota_{e_3} h)\varphi(x)\psi_S(\frac{1}{2}(S,z)+\lambda_S(x,y))}\, dv_1\\
 &=\ds\int_{ Y(\A)}\int_{ Z(\A)}f(\iota\cdot v_1(0,y,z) \iota_{e_3} h)
\overline{\Big(\int_{ X(\A)} 
\omega_S(\iota_{e_3} h)\varphi(x)\psi_S(\lambda_S(x,y))dx\Big)\psi_S(\frac{1}{2}(S,z))}\, dydz\\
 &=\ds\int_{ Y(\A)}\int_{ Z(\A)}f(\iota\cdot v_1(0,y,z) \iota_{e_3} h)
\overline{\Big(
F_S(\omega_S(\iota_{e_3} h)\varphi)(-y)\Big)\psi_S(\frac{1}{2}(S,z))}\, dydz\\
&=\ds\int_{ Y(\A)}\int_{ Z(\A)}f(\iota\cdot v_1(0,y,z) \iota_{e_3} h)
\overline{(\omega_S(h)\varphi)(y)\psi_S(\frac{1}{2}(S,z))}\, dydz.\\
\end{array}
$$
We now compute the actions of $p_{be_3},\ b\in \A$ and $h(a),\ a\in \A^\times$ respectively. 
By Lemma \ref{Ik-lem}, one has 
$$
\begin{array}{ll}
R(p_{be_3} h;f,\varphi) &=\ds\int_{ Y(\A)}\int_{ Z(\A)}f(\iota\cdot v_1(0,y,z+by{}^t\bar{y}) \iota_{e_3} h)
\overline{(\omega_S(p_{be_3}h)\varphi)(y)\psi_S(\frac{1}{2}(S,z+by{}^t\bar{y}))}\, dydz \\
&({\rm transform }\ z\ {\rm into}\ z+by{}^t\bar{y}\ {\rm and}\ \omega_S(p_{be_3}h)=\omega_S(h))\\
&=R(h;f,\varphi). 
\end{array}
$$
By Lemma \ref{Ik-lem} again, one has 
$$
\begin{array}{ll}
& R(h(a) h;f,\varphi)=\ds\int_{ Y(\A)}\int_{ Z(\A)}f(h(a)\cdot\iota\cdot v_1(0,ay,z) \iota_{e_3} h)
\overline{(\omega_S(h(a) h)\varphi)(y)\psi_S(\frac{1}{2}(S,z))}\, dydz \\
&=\delta^{\frac{1}{2}}_{\bold P}(h(a))|a|^s_{\A} \omega(a)|a|^8_{\A} 
\ds\int_{ Y(\A)}\int_{ Z(\A)}f(\iota\cdot v_1(0,ay,z) \iota_{e_3} h)
\overline{(\omega_S(h)\varphi)(ay)\psi_S(\frac{1}{2}(S,z))}\, dydz.
\end{array}
$$
Transform $y$ into $\frac{y}{a}$ and note that $d(\frac{y}{a})=|a|^{-16}_{\A}dy$ and 
$\delta^{\frac{1}{2}}_{\bold P}(h(a))=|a|^9_{\A}$. So 
$$
R(h(a) h;f,\varphi)=|a|^{1+s}_{\A}\omega(a)R(h;f,\varphi)=\delta^{\frac{1}{2}}_{P_H}(a)|a|^{s}_{\A}\omega(a)R(h;f,\varphi).
$$
The smoothness and $\mathbb{K}$-finiteness follow from those of $f$ and $\varphi$. 
Hence $R(h;f,\varphi)\in I_1(\omega,s)$. $\square$

\section{Compatible family of Eisenstein series}

\begin{defin}
Let $k$ be a positive integer. Let $h(\tau)$ be an elliptic modular form of weight $k$ with respect to $SL_2(\Z)$. 
We denote by $\mathcal V(h)$, the $\C$-vector space spanned by $\{ h|_k[\gamma],\ \gamma\in GL_2(\Q)^+\}$ where 
$h|_k[\gamma](\tau):=j(\gamma,\tau)^{-k}h(\gamma\tau)$. 
\end{defin}

Let $\Phi(X)=\Phi(\{X_p\}_p)=\ds\otimes'_p \Phi_p(X_p)\in \ds\otimes'_p \C[X_p,X_p^{-1}]$ where $p$ runs over all prime numbers.  
Denote by $\mathcal R$ the set of all $\Phi(X)=\Phi(\{X_p\}_p)$ such that $\Phi_p(X_p)=\Phi_p(X_p^{-1})$ for any prime $p$.
For each non-zero sequence of complex numbers $\{a_p\}_p$ indexed by all primes, the value of $\Phi(X)$ at $\{X_p\}_p=\{a_p\}_p$ is denoted by $\Phi(\{a_p\})$. 
For each positive even integer $k\ge 4$, let 
$$E^1_{k}(\tau)=\ds\sum_{(c,d)\in \Z^2\setminus\{(0,0)\}\atop (c,d)=1}(c\tau+d)^{-k},
$$ 
which is the Eisenstein series of 
weight $k$ with respect to $SL_2(\Z)$. 

\begin{defin}\label{family} For a sufficiently large $k_0$, a compatible family of Eisenstein series is a family of elliptic modular forms, for even 
integer $k'\geq k_0$
$$g_{k'}(\tau)=b_{k'}(0)+\sum_{N\in\Q_{>0}} N^{\frac{k'-1}{2}} b_{k'}(N) q^N,\ q=\bold{e}(\tau), 
$$
satisfying the following three conditions:
\begin{enumerate}
\item $g_{k'}\in\mathcal V(E^1_{k'})$ for all $k'\geq k_0$
\item for each $N\in\Q_+^\times$, there exists $\Phi_N\in\mathcal R$ such that $b_{k'}(N)=\Phi_N(\{p^{\frac{k'-1}{2}}\}_p)$.
\item there exists a congruence subgroup $\Gamma\subset SL_2(\Z)$ such that $g_{k'}\in M_{k'}(\Gamma)$ for all $k'\geq k_0$. 
Here $ M_{k'}(\Gamma)$ stands for the space of elliptic modular forms of weight $k$ with respect to $\Gamma$.
\end{enumerate}
\end{defin}

Then by Lemma 10.2 of \cite{Ik2}, we have
\begin{lemma}\label{ikeda-lem} Let $f(\tau)=\ds\sum_{n=1}^\infty c(n)q^n$ be a Hecke eigenform of weight $k$ with respect to $SL_2(\Z)$ with 
$c(p)=p^{\frac{k-1}{2}}(\alpha_p+\alpha^{-1}_p)$. 
Assume that there is a finite dimensional representation $(u,\C^d)$ of $SL_2(\Z)$ and 
$$\vec{\Phi}_N:={}^t(\Phi_{1,N},\ldots,\Phi_{d,N})\in\mathcal R^d,\ N\in\Q_{>0}
$$ 
satisfying the following two conditions: 
\begin{enumerate}
\item there exists a vector valued modular form $\vec{g}_{k'}={}^t(g_{1,k'},\ldots,g_{d,k'})$ which has  
$$\vec{g}_{k'}(\tau)=\vec{b}_{k'}(0)+\sum_{N\in\Q_{>0}} N^{\frac{k'-1}{2}} \vec{b}_{k'}(N) q^n,\  (\vec{b}_{k'}(N)=
{}^t(b_{1,k'}(N),\ldots,b_{d,k'}(N)),\ N\in \Q_{\ge 0})$$ of weight $k'$ with type $u$ 
for each sufficiently large even integers $k'$, hence this means that 
$$\vec{g}_{k'}(\tau)|_{k'}[\gamma]:={}^t(g_{1,k'}|_{k'}[\gamma],\ldots,g_{d,k'}|_{k'}[\gamma])=
u(\gamma)\vec{g}_{k}(\tau)\ {\rm for\ any}\ \gamma\in SL_2(\Z),
$$ 
\item each component $g_{i,k'},\ (1\le i\le d)$ of $\vec{g}_{k'}(\tau)$ is a compatible family of Eisenstein series such that $$b_{i,k'}(N)=\Phi_{i,N}(\{p^{\frac{k'-1}{2}}\}_p).
$$ 
\end{enumerate}
Then $\vec{h}(\tau):=\ds\sum_{N\in\Q_{>0}} N^{\frac{k-1}{2}} \vec{\Phi}_{N}(\{\alpha_p\}_p) q^N$ is 
a vector valued modular form of weight $k$ with type $u$, hence it satisfies 
$$\vec{h}(\tau)|_{k}[\gamma]=u(\gamma)\vec{h}\ {\rm for\ any}\ \gamma\in SL_2(\Z).
$$
\end{lemma}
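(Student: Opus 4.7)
The plan is to transfer the modularity of each $\vec{g}_{k'}$, valid for infinitely many large even weights $k'$, down to $\vec{h}$ at weight $k$ by a Zariski-density argument on the Satake parameters. The key uniformity is that the \emph{same} Laurent polynomials $\vec{\Phi}_N\in\mathcal{R}^d$ govern the Fourier coefficients across all weights in the family, so specializing the substitution point from $\{p^{(k'-1)/2}\}_p$ to $\{\alpha_p\}_p$ ought to preserve any genuine polynomial identity among them.

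It suffices to verify $\vec{h}|_k[\gamma]=u(\gamma)\vec{h}$ for $\gamma$ in a generating set of $SL_2(\Z)$, say $\{T,S\}$. For each such $\gamma$, Definition~\ref{family}~(1) combined with the fixed level $\Gamma$ of Definition~\ref{family}~(3) lets me write each component $g_{i,k'}$ as a finite linear combination of Eisenstein translates $E^1_{k'}|_{k'}[\beta_j]$, where the $\beta_j\in GL_2(\Q)^+$ are chosen to correspond to cusps of $\Gamma$ and are independent of $k'$. Since the right action of $SL_2(\Z)$ permutes the basis $\{E^1_{k'}|_{k'}[\beta_j]\}$ (using $SL_2(\Z)$-invariance of $E^1_{k'}$ to absorb the remainder), the $q^N$-coefficient of $\vec{g}_{k'}|_{k'}[\gamma]-u(\gamma)\vec{g}_{k'}$ becomes, for each fixed $N\in\Q_{>0}$, a Laurent polynomial in finitely many variables $\{X_p\}_{p\in\Sigma(N)}$ vanishing at $X_p=p^{(k'-1)/2}$ for every sufficiently large even $k'$.

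The density step is then the following classical fact: a Laurent polynomial $P(X_{p_1},\dots,X_{p_r})$ vanishing at $(p_1^{s/2},\dots,p_r^{s/2})$ for $s$ in an infinite arithmetic progression of positive integers must vanish identically. Indeed, $Q(s):=P(p_1^{s/2},\dots,p_r^{s/2})$ is an exponential polynomial whose characteristic exponents $\tfrac{1}{2}\sum n_i\log p_i$ are pairwise distinct, by the $\Q$-linear independence of $\log p_1,\dots,\log p_r$ (itself a consequence of unique factorization). A Vandermonde argument along the progression then forces every coefficient of $Q$, and hence of $P$, to vanish. Specializing the resulting identities in $\mathcal{R}^d$ at $X_p=\alpha_p$ and reassembling the $q^N$-coefficients delivers $\vec{h}|_k[\gamma]=u(\gamma)\vec{h}$.

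The principal obstacle is the $S$-case, where $\tau\mapsto-1/\tau$ a priori mixes every Fourier coefficient at $\infty$ of $\vec{g}_{k'}$ and could turn the transformation law into an infinite linear relation rather than a polynomial identity in finitely many Satake variables. The Eisenstein-series structure of Definition~\ref{family}~(1) is exactly what rescues the situation: $\vec{g}_{k'}|_{k'}[S]$ stays a \emph{finite} linear combination of translates $E^1_{k'}|_{k'}[\beta_j S]$, and each of these has a Fourier expansion at $\infty$ whose coefficients are explicit divisor sums, Laurent-polynomial in $\{p^{(k'-1)/2}\}_p$ uniformly in $k'$. Securing this uniformity, so that the comparison of $q^N$-coefficients genuinely outputs a single Laurent polynomial in $\{X_p\}_{p\in\Sigma(N)}$ rather than an $N$-varying infinite combination, is the technical heart of the plan; once achieved, the exponential-Vandermonde density argument above applies verbatim to both generators and completes the proof.
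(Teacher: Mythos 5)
The paper offers no proof of this lemma at all---it simply invokes Lemma 10.2 of \cite{Ik2}---so the real comparison is with Ikeda's argument. Your density step (a Laurent polynomial in finitely many variables $X_{p_1},\dots,X_{p_r}$ that vanishes at $(p_i^{(k'-1)/2})$ for infinitely many $k'$ vanishes identically, by the $\Q$-linear independence of the $\log p_i$) is correct and is indeed the auxiliary lemma Ikeda uses. The gap is in your final step, ``specializing the resulting identities in $\mathcal{R}^d$ at $X_p=\alpha_p$ and reassembling the $q^N$-coefficients delivers $\vec{h}|_k[\gamma]=u(\gamma)\vec{h}$.'' What your argument actually produces are identities $\Phi^{\gamma}_{i,N}=\sum_l u(\gamma)_{il}\Phi_{l,N}$ between the Laurent polynomials encoding the $q^N$-coefficients of $\vec{g}_{k'}|_{k'}[\gamma]$ and of $u(\gamma)\vec{g}_{k'}$; specializing gives numerical identities $\Phi^{\gamma}_{i,N}(\{\alpha_p\})=\sum_l u(\gamma)_{il}\Phi_{l,N}(\{\alpha_p\})$. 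To conclude, you would need to know that $N^{(k-1)/2}\Phi^{\gamma}_{i,N}(\{\alpha_p\})$ is the $q^N$-coefficient of $h_i|_k[\gamma]$, and nothing you wrote supplies this: for $\gamma=S$ the slash operator is an analytic operation (sum the series, substitute $-1/\tau$, re-expand), not a coefficient-wise one, so modularity of a specialized $q$-series cannot be ``reassembled'' from formal coefficient identities. Your treatment of the $S$-case only controls $\vec{g}_{k'}|_{k'}[S]$, which is again an Eisenstein series; it says nothing about $\vec{h}|_k[S]$, and $\vec{h}$ is not an Eisenstein series.

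The decisive symptom is that your argument never uses the hypothesis that the $\alpha_p$ are the Satake parameters of an actual Hecke eigenform $f$ of weight exactly $k$; as written it would prove the same conclusion for an arbitrary sequence $\{\alpha_p\}$ of nonzero complex numbers. That statement is false: taking $d=1$, $u$ trivial and $g_{k'}=E^1_{k'}$, the specialization is a multiplicative $q$-series satisfying the Hecke relations with $c(p)=p^{(k-1)/2}(\alpha_p+\alpha_p^{-1})$, and by Hecke theory such a series lies in $M_k(SL_2(\Z))$ only for very special $\{\alpha_p\}$. The missing idea is precisely where $f$ enters: one expands $\vec{g}_{k'}$ in the standard basis of Eisenstein series attached to the cusps of the fixed group $\Gamma$, whose Fourier coefficients are (twisted) divisor sums of the shape $N^{(k'-1)/2}\prod_p\bigl(X_p^{d_p+1}-X_p^{-d_p-1}\bigr)\bigl(X_p-X_p^{-1}\bigr)^{-1}$ evaluated at $X_p=p^{(k'-1)/2}$, and which $SL_2(\Z)$ permutes by a matrix independent of $k'$; the specialization $X_p\mapsto\alpha_p$ of these basis elements is then identified with genuine weight-$k$ modular forms built from $f$ (translates and twists of $f$, via the multiplicativity $c(N)/N^{(k-1)/2}=\prod_p(\alpha_p^{d_p+1}-\alpha_p^{-d_p-1})(\alpha_p-\alpha_p^{-1})^{-1}$). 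Modularity of $\vec{h}$ is inherited from these actual modular forms, with your density lemma used only to transfer the expansion coefficients and the equivariance data from the family to the specialization. Without that identification the proof does not close.
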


\section{Construction of cusp forms on the exceptional domain}\label{construction}
In this section we shall prove our main theorem. The strategy is the same as in \cite{Ik1}, \cite{Ik2}, and \cite{Yamana}. 

For any positive integer $k\ge 10$, let $f(\tau)=\ds\sum_{n=1}^\infty c(n)q^n$ be a Hecke eigenform of 
weight $2k-8$ with respect to $SL_2(\Z)$ with 
$c(p)=p^{\frac{2k-9}{2}}(\alpha_p+\alpha^{-1}_p)$. 
Let us formally define a function on $\frak T$ by  
$$F(Z)=\sum_{T\in \frak J(\Z)_+}A_F(T)\bold{e}((T,Z)),\quad 
A_F(T)=\det(T)^{\frac{2k-9}{2}} \prod_p \widetilde{f}_T^p(\alpha_p) ,\ Z\in \frak T.
$$ 
Then we prove the following:
\begin{theorem}\label{main-thm} $F(Z)$ is a non-zero cusp form of weight $2k$ with respect to $\Gamma$.
\end{theorem}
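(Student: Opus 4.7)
The plan is to follow the construction of Ikeda (\cite{Ik1}, \cite{Ik2}, \cite{Yamana}). The key idea is: for each $S \in \frak J_2(\Z)_+$ and $\xi \in \Xi(S)$, I interpret the formal $(S,\xi)$-component of $F$ as the specialization at the Satake parameters $\{\alpha_p\}$ of a compatible family of elliptic Eisenstein series arising from the Fourier--Jacobi coefficients of the Siegel Eisenstein series $E_{2k'}$ as $k'$ varies over sufficiently large even integers. Lemma \ref{ikeda-lem} will then imply that this specialization is itself an elliptic modular form, and reassembly via theta series yields the required Jacobi forms $F_S$, which package into $F \in \mathcal M_{2k}(\Gamma)$.

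Fix $S \in \frak J_2(\Z)_+$ and $\xi \in \Xi(S)$. By Remark \ref{important-rmk}, the formal $(S,\xi)$-component of $F$ is
$$F_{S,\xi}(\tau) = \sum_{N - \sigma_S(\xi,\xi) \geq 0} A_F(S_{\xi,N}) \bold{e}\big((N - \sigma_S(\xi,\xi))\tau\big).$$
Formula (\ref{formula}) gives $\det(S_{\xi,N}) = \det(S)(N - c_S(\xi))$ for an appropriate constant $c_S(\xi)$ depending only on $S$ and $\xi$. Writing $M = N - c_S(\xi)$, we obtain
$$F_{S,\xi}(\tau) = \det(S)^{(2k-9)/2} \sum_M M^{(2k-9)/2}\, \Phi^{S,\xi}_M(\{\alpha_p\})\, \bold{e}\big((M + c_S(\xi) - \sigma_S(\xi,\xi))\tau\big),$$
where $\Phi^{S,\xi}_M(\{X_p\}) := \prod_p \widetilde{f}^p_{S_{\xi, M+c_S(\xi)}}(X_p) \in \mathcal R$ depends on $(S,\xi,M)$ but is \emph{independent} of the weight. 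Substituting $p^{(2k'-9)/2}$ in place of $\alpha_p$ recovers, up to the constant $C_{2k'}$ in Theorem \ref{fourier1}, the $(S,\xi)$-component of the Eisenstein series $E_{2k'}$.

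By Theorem \ref{key-thm} combined with Lemma \ref{relation}, for each sufficiently large even $k'$ the component $(E_{2k'})_{S,\xi}$ is a scalar multiple of a classical elliptic Eisenstein series of weight $2k'-8$ on a congruence subgroup $\Gamma_{S,\xi} \subset SL_2(\Z)$ depending only on $(S,\xi)$. The transformation behavior under $SL_2(\Z)$ of the vector $((E_{2k'})_{S,\xi})_{\xi \in \Xi(S)}$ is governed by the finite-dimensional representation $\rho$ of $SL_2(\Z)$ acting on the $\C$-span of $\{\theta^S_{\varphi_\xi}\}_{\xi \in \Xi(S)}$ through the Weil representation from Section 4; finite-dimensionality of $\rho$ follows from the finiteness of $\Xi(S)$. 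Thus the hypotheses of Lemma \ref{ikeda-lem} are satisfied, and the lemma produces a vector-valued elliptic modular form $(F_{S,\xi})_{\xi \in \Xi(S)}$ of weight $2k-8$ and type $\rho$. Consequently $F_S(\tau, u) := \sum_\xi F_{S,\xi}(\tau)\, \theta^S_{\varphi_\xi}(\tau, u) \in J_{2k,S}(\Gamma_J)$.

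It remains to assemble and verify. The Fourier series (\ref{FJ}) built from these $F_S$ defines $F$ as a holomorphic function on $\frak T$. Invariance under $\bold N(\Z)$ is automatic from the expansion; invariance under the Jacobi group $\Gamma_J$ (in particular under $\iota_{e_3}$) follows from each $F_S$ being a Jacobi form; and invariance under $\bold M'(\Z)$ reduces to $A_F(gT) = A_F(T)$ for $g \in \bold M'(\Z)$, which holds because $\nu(g) = 1$ gives $\det(gT) = \det(T)$, while the local polynomials $\widetilde{f}^p_T$ are $\bold M'(\Z_p)$-invariant by Karel's construction. Lemma \ref{generator} then yields $F \in \mathcal M_{2k}(\G)$. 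Cuspidality is immediate since the Fourier expansion is supported on $\frak J(\Z)_+$, while non-vanishing follows from $A_F(T) = 1$ whenever $\det(T) = 1$ (the product over $p \mid \det T$ being empty). The main technical obstacle is the middle step: organizing the components $\{(E_{2k'})_{S,\xi}\}_{\xi,k'}$ into a genuine compatible family in the vector-valued sense of Lemma \ref{ikeda-lem}. This requires both that the local polynomials $\widetilde{f}^p_{S_{\xi,N}}$ be independent of $k'$ (built into Theorem \ref{fourier1}) and that the $SL_2(\Z)$-action via $\rho$ act compatibly on the whole family --- which is precisely where the explicit theta-transformation results of Section \ref{weil-rep} are indispensable.
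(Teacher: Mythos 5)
Your proposal follows essentially the same route as the paper: reduce, via Lemma \ref{generator}, to the single relation $F(\iota_{e_3}Z)=j(\iota_{e_3},Z)^{2k}F(Z)$, i.e.\ to showing each $F_S$ is a Jacobi form; decompose $F_S$ into theta components $F_{S,\xi}$; recognize the corresponding components of the normalized Eisenstein series $\mathcal E_{2k',0}$ as a compatible family (via Theorem \ref{key-thm} and Lemma \ref{relation}); and invoke Lemma \ref{ikeda-lem} to conclude that $(F_{S,\xi})_{\xi\in\Xi(S)}$ is a vector-valued modular form of the appropriate type. The non-vanishing argument ($A_F(T)=1$ when $\det T=1$) and the cuspidality remark also match the paper.

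The one substantive step you assert rather than prove is that the defining series for $F(Z)$ converges absolutely and locally uniformly, so that $F$ is actually a holomorphic function on $\frak T$ to which Remark \ref{important-rmk} applies. This is not free: the paper devotes Lemma \ref{conv} to it, using $|\alpha_p|=1$ (Deligne) together with an explicit polynomial bound on the coefficients of $f^p_T$ extracted from Karel's local singular series, yielding $|A_F(T)|\le \det(T)^{k+12-\frac12}$, and then comparing $\sum_{T}\det(T)^{m}e^{-2\pi(T,Y)}$ with the convergent integral $\int_{R_3^+(\R)}\det(X)^{m}e^{-2\pi(X,Y)}\,dX$. Without some such estimate the object $F$ is only a formal series and the subsequent transformation arguments do not yet apply. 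Everything else in your outline (including the justification of $\bold M'(\Z)$-invariance via the invariance of $\det(T)$ and of the local polynomials $\widetilde f^p_T$, which the paper leaves implicit) is consistent with the paper's proof.
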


\begin{remark} \label{integer}
If $f$ has integer Fourier coefficients, then $F$ also has integer Fourier coefficients. 
Just observe from Corollary \ref{fourier2} that $\widetilde{f}_T^p(X)=X^d+X^{-d}+a_1(X^{d-2}+X^{-(d-2)})+\cdots+a_{\frac {d-1}2} (X+X^{-1})$ if $d$ is odd, and $\widetilde{f}_T^p(X)=X^d+X^{-d}+a_1(X^{d-2}+X^{-(d-2)})+\cdots+a_{\frac d2}$ if $d$ is even, where $d=\text{ord}_p(det(T))$ and $a_i$'s are integers. 
\end{remark} 

First of all we shall prove the convergence of $F(Z)$:
\begin{lemma}\label{conv} The series $F(Z)$ is absolutely and uniformly convergent on any compact domain of $\frak T$. 
\end{lemma}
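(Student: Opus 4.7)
The plan is to bound $|A_F(T)|$ by a $T$-independent multiple of the Fourier coefficient $a_{2k}(T)$ of the Siegel Eisenstein series $E_{2k}$ of Corollary~\ref{fourier2}. Since $2k\geq 20>18$, the series $E_{2k}(Z)=\ds\sum_{T\in\frak J(\Z)_+}a_{2k}(T)\bold{e}((T,Z))$ converges absolutely and uniformly on every compact subset of $\frak T$ (\cite{B}), so such a bound immediately yields the claim.

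I would proceed in three steps. First, since $f\in S_{2k-8}(SL_2(\Z))$ is a normalized Hecke eigenform of weight $2k-8\geq 12$, Deligne's theorem gives $|\alpha_p|=1$ for every prime $p$. Second, by Corollary~\ref{fourier2}, $\widetilde{f}^p_T(X)$ is a symmetric Laurent polynomial with integer coefficients of degree $d_p={\rm ord}_p(\det T)$ and leading term $X^{d_p}$, satisfying $\widetilde{f}^p_T(X)=\widetilde{f}^p_T(X^{-1})$. Pairing the roots of $f^p_T$ reciprocally as $(\beta_j,\beta_j^{-1})$, one writes $\widetilde{f}^p_T$ as a product of factors of the form $X^2+X^{-2}-(\beta_j+\beta_j^{-1})$, with an extra self-reciprocal factor when $d_p$ is odd.

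The crucial estimate is then
$$\bigl|\widetilde{f}^p_T(\alpha_p)\bigr|\leq \widetilde{f}^p_T\bigl(p^{(2k-9)/2}\bigr)$$
for each $p\mid\det T$, up to a harmless $T$-independent factor. Indeed, $|\alpha_p^2+\alpha_p^{-2}|\leq 2$ since $|\alpha_p|=1$, while at $X=p^{(2k-9)/2}$ one has $X^2+X^{-2}=p^{2k-9}+p^{-(2k-9)}$, which for $k\geq 10$ is large relative to the reciprocal-root sums $\beta_j+\beta_j^{-1}$ arising in Karel's explicit description of the local Siegel series. Taking the product over $p\mid\det T$ and invoking the formulas for $A_F(T)$ and $a_{2k}(T)$ gives $|A_F(T)|\leq (\mathrm{const})\cdot a_{2k}(T)$, which settles the lemma.

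The main obstacle is the factor-by-factor estimate displayed above: uniformly in $T$ one must bound the reciprocal-root sums $\beta_j+\beta_j^{-1}$ polynomially in $p$ with exponent less than $2k-9$. This is the $E_{7,3}$-analog of the comparison Ikeda uses in the Siegel case \cite{Ik1}, and it follows from the monic integer structure of $f^p_T$ together with the functional equation and Karel's explicit formula (Theorem~\ref{fourier1}), using the hypothesis $k\geq 10$.
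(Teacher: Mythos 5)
Your overall strategy---dominating $|A_F(T)|$ by a constant multiple of the Eisenstein coefficient $a_{2k}(T)$ and quoting the convergence of $E_{2k}$---is not the paper's route, and its crucial step is not established. The comparison $|\widetilde{f}^p_T(\alpha_p)|\leq C\,\widetilde{f}^p_T(p^{(2k-9)/2})$ cannot be obtained from the reciprocal-root factorization as you describe. Writing $\widetilde{f}^p_T(X)=\prod_j\bigl(X^2+X^{-2}-(\beta_j+\beta_j^{-1})\bigr)$ is fine, and each factor at $X=\alpha_p$ is indeed at most $2+|\beta_j+\beta_j^{-1}|$; but to compare with the factor at $X=p^{(2k-9)/2}$ you need the quantities $\beta_j+\beta_j^{-1}$ to stay uniformly away from $p^{2k-9}+p^{-(2k-9)}$, and nothing in Theorem \ref{fourier1} gives this. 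The only control available on the roots of the monic integer polynomial $f_T^p$ comes from bounding its coefficients, and (as the paper's own computation shows) these are only bounded by $p^{11d}$ with $d={\rm ord}_p(\det T)$ unbounded; so a root $\beta_j$ of size comparable to $p^{2k-9}$ is not excluded, in which case the factor $p^{2k-9}+p^{-(2k-9)}-(\beta_j+\beta_j^{-1})$ can be arbitrarily small while the corresponding factor at $\alpha_p$ has size about $p^{2k-9}$, and the ratio blows up with no uniformity in $T$. Equivalently, your inequality presupposes a uniform positive lower bound on $\widetilde{f}^p_T(p^{(2k-9)/2})$ (a lower bound on the local densities, and in particular the strict positivity of $a_{2k}(T)$ for every $T\in\frak J(\Z)_+$), which you never address and which does not follow from ``the monic integer structure and the functional equation.''

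The paper avoids all of this by working only with an \emph{upper} bound: from Karel's identity $f_T^p(X)=(1-p^{-5}X)^{-1}(1-p^{-1}X)^{-1}\sum_{m}\alpha_m(T)p^{-9m}X^m$ with $|\alpha_m(T)|\leq p^{27m}$ and $2m\leq d$, one gets that every coefficient of $f_T^p$ is at most $p^{11d}$ in absolute value, whence $|\widetilde{f}^p_T(\alpha_p)|\leq (d+1)p^{11d}\leq p^{12d}$ and $|A_F(T)|\leq \det(T)^{k+12-\frac12}$. Convergence then follows by comparing $\sum_{T\in\frak J(\Z)_+}\det(T)^{N}e^{-2\pi(T,Y)}$ with the convergent integral $\int_{R_3^+(\R)}\det(X)^{N}e^{-2\pi(X,Y)}\,dX$ from \cite{B}. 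If you want to salvage your argument, you should replace the comparison with $a_{2k}(T)$ by this direct polynomial-growth bound on $|A_F(T)|$; the majorant need not be (and is not taken to be) the Eisenstein series itself.
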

\begin{proof}
It is well-known that $|\alpha_p|=1$. By definition, $\widetilde{f_T^p}(X)=X^d f_T^p(X^{-2})$, and $f_T^p$ is a monic polynomial with integer coefficients of degree $d={\rm ord}_p(\det(T))$, i.e.,
$f_T^p(X)=X^d+a_1 X^{d-1}+\cdots+a_{d-1}X+a_d$. Let $M=\max\{|a_1|,...,|a_d|\}$. 
We use the identity from \cite{Ka}, page 187,
$$(1-p^{-s})^{-1} S_p(T)=\sum_{m=0}^\infty \alpha_m(T)p^{-ms},\quad \alpha_m(T)=\sum_X \omega_m^{(T,X)},
$$
where $X\in \Lambda(3)_p/p^m\Lambda(3)_p$ and $\tau_i(X)\equiv 0$ (mod $p^{m(i-1)}$) for $2\leq i\leq 3$, and $2m\leq d$.
Hence $|\alpha_m(T)|\leq p^{27m}$. We also have (\cite{Ka}, page 197)
$$S_p(T)=(1-p^{-s})(1-p^{4-s})(1-p^{8-s})f_T^p(p^{9-s}).
$$
Hence 
$$f_T^p(X)=(1-p^{-5}X)^{-1}(1-p^{-1}X)^{-1}\sum_{m=0}^\infty \alpha_m(T)p^{-9m} X^m.
$$
So $M\leq (d+1)^2 p^{18m}$. By the trivial estimate, $d+1\leq p^d$, hence we have $M\leq p^{2d}p^{9d}=p^{11d}.$
Therefore, $|\widetilde{f_T^p}(\alpha_p)|\leq (d+1)M\leq p^{12d}.$
Hence 
$$|A_F(T)|\leq \det(T)^{k+12-\frac 12}.
$$

Now we use the fact from \cite{B}, page 538, for $l>8$, 
$$\int_{R_3^+(\Bbb R)} \det(X)^{l-9} e^{2\pi(X,Y)}\, dX=\pi^{12} (2\pi i)^{-3l} \prod_{n=0}^2 \Gamma(l-4n) \det(Y)^{-l}.
$$
where $dX$ is the ordinary Euclidean measure.
Hence 
$$\left|\sum_{T\in \frak J(\Bbb Z)_+} A_F(T) e^{2\pi i (T,Z)}\right|\leq \sum_{T\in \frak J(\Bbb Z)_+} \det(T)^{k+12-\frac 12} e^{2\pi (T,Y)}\leq 
\int_{R_3^+(\Bbb R)} \det(X)^{k+12-\frac 12} e^{2\pi (X,Y)}\, dX,
$$
converges.
\end{proof}
Clearly $F(Z+N)=F(Z)$ for $N\in \bold N(\Z)$. Also $F(\gamma Z)=F(Z)$ for $\gamma\in\bold M'(\Z)$.
Thanks to Lemma \ref{generator}, it is enough to prove 
\begin{equation}\label{inv}
F(\iota_{e_3} Z)=j(\iota_{e_3},Z)^{2k} F(Z).
\end{equation}

We prove (\ref{inv}) by using results of previous sections: Fix $S\in \frak J_2(\Z)_{+}$. 
Since $F(Z)$ is invariant under $\G\cap \bold P(\Q)$ as mentioned above and is holomorphic by Lemma \ref{conv}, then by 
Remark \ref{important-rmk}, one has the Fourier-Jacobi expansion: 
\begin{equation}\label{FJ1}
F\left(\begin{array}{cc}
W & u \\
{}^t\bar{u} & \tau 
\end{array}\right)=\sum_{S\in  \frak J_2(\Z)_+}F_S(\tau,u)\bold{e}((S,W)), 
\end{equation} 
\begin{equation}\label{FJ2}
F_S(\tau,u)=\sum_{\xi\in \Xi(S)}F_{S,\xi}(\tau) \theta^S_{\varphi_\xi}(\tau,u), 
\end{equation}
and 
\begin{equation}\label{FJ3} 
\begin{array}{ll}
F_{S,\xi}(\tau)&=\ds\sum_{N\in \Z \atop N-\sigma_S(\xi,\xi)\ge 0}
A_F(S_{\xi,N})\bold{e}((N-\sigma_S(\xi,\xi))\tau),\ S_{\xi,N}:=\left(\begin{array}{cc}
S & S\xi \\
{}^t\bar{\xi}S & N 
\end{array}\right) \\
&=\ds\sum_{N\in \Z \atop N-\sigma_S(\xi,\xi)\ge 0}\det(S_{\xi,N})^{\frac{2k-9}{2}}
\prod_p \widetilde{f}_{S_{\xi,N}}^p(\alpha_p)\bold{e}((N-\sigma_S(\xi,\xi))\tau)\\
&=\det(S)^{\frac{2k-9}{2}}\ds\sum_{N\in \Z \atop N-\sigma_S(\xi,\xi)\ge 0}(N-\sigma_S(\xi,\xi))^{\frac{2k-9}{2}}
\prod_p \widetilde{f}_{S_{\xi,N}}^p(\alpha_p)\bold{e}((N-\sigma_S(\xi,\xi))\tau).
\end{array}
\end{equation}
For the last equality above, we used the formula $\det(S_{\xi,N})=\det(S)(N-\sigma_S(\xi,\xi))$ by using (\ref{formula}). 
The condition (\ref{inv}) is equivalent to claiming that $F_S(\tau,u)\in J_{k,S}(\Gamma_J)$ for any $S\in \frak J_2(\Z)_+$. 
Therefore for each fixed $S\in \frak J_2(\Z)_+$, we have only to check the condition 
\begin{equation}\label{inv1}
F_S|_{k,S}[w_1](\tau,u)=F_S(\tau,u)\ {\rm for}\  
w_1=\left(\begin{array}{cc}
0 & 1 \\
-1 & 0   
\end{array}\right).
\end{equation}
By (2,1), p.124 of \cite{takase}, for each $\gamma\in SL_2(\Z)$, there exists a unitary matrix $u_S(\gamma)=
(u_S(\gamma)_{\xi \eta})_{\xi,\eta\in \Xi(S)}$ such that 
\begin{equation}\label{inv2}
\theta^S_{\varphi_\xi}|_{k,S}[\gamma](\tau,u)=\sum_{\eta\in \Xi(S)}u_S(\gamma)_{\xi\eta}\theta^S_{\varphi_\eta}(\tau,u).
\end{equation}
Further there exists a positive integer $\Delta_S$ depending on $S$ such that $u_S$ is trivial on $\Gamma(\Delta_S)\subset SL_2(\Z)$.
Since $\{ \theta^S_{\varphi_\xi}\ |\  \xi\in \Xi(S) \}$ are linearly independent over $\C$, 
to check (\ref{inv1}), with (\ref{inv2}) it suffices to prove that $\{F_{S,\xi}\}_{\xi\in \Xi(S)}$ is a vector 
valued modular form of weight $2k$ with type $u_S$. 

For a sufficiently large positive integer $k'$, 
we now turn to consider $(S,\xi)$-component $(\mathcal{E}_{2k',0})_{S,\xi}$ of the classical 
Eisenstein series  
\begin{eqnarray*} \mathcal{E}_{2k',0}(Z):&=&\frac{1}{C_{2k'}}E_{2k',0}(Z)=\sum_{T\in \frak J(\Z)_+}\widetilde{a}_{2k'}(T)\bold{e}((T,Z)),\\
\widetilde{a}_{2k'}(T)&=&\det(T)^{\frac{2k'-9}{2}}\prod_{p|\det(T)}\widetilde{f}^p_T(p^{\frac{2k'-9}{2}}),
\end{eqnarray*}
on $\frak T$, where $C_{2k'}$ is the constant in Theorem \ref{fourier1}. Then one has 
$$
\begin{array}{l}
\det(S)^{-\frac{2k'-9}{2}}(\mathcal{E}_{2k',0})_{S,\xi}(\tau)
=\det(S)^{-\frac{2k'-9}{2}}
\ds\sum_{N\in \Z \atop N-\sigma_S(\xi,\xi)\ge 0}
\widetilde{a}_{2k'}(\left(\begin{array}{cc}
S & S\xi \\
{}^t\bar{\xi}S & N 
\end{array}\right))\bold{e}((N-\sigma_S(\xi,\xi))\tau)\\
=
\ds\sum_{N\in \Z \atop N-\sigma_S(\xi,\xi)\ge 0}(N-\sigma_S(\xi,\xi))^{\frac{2k'-9}{2}}
\prod_{p|\det(S_{\xi,N})}\widetilde{f}^p_{S_{\xi,N}}(p^{\frac{2k'-9}{2}})\bold{e}((N-\sigma_S(\xi,\xi))\tau)\\
\end{array}
$$
Then by Lemma \ref{relation}, Lemma \ref{fourier2}, and Theorem \ref{key-thm}, 
$\{\det(S)^{-\frac{2k'-9}{2}}(\mathcal{E}_{2k',0})_{S,\xi}\}_{k'\gg0}$ makes up a family of Eisenstein series. 
Here we use $u_S|_{\Gamma(\Delta_S)}=1$ to check the third condition of Definition \ref{family}.  
Applying Lemma \ref{ikeda-lem} with (\ref{FJ3}), one can conclude that
$$F_{S,\xi}=\det(S)^{\frac{2k-9}{2}}\ds\sum_{n\in\Z_{>0}\atop 
n=N-\sigma_S(\xi,\xi),\ N\in \Z} n^{\frac{2k-9}{2}} \prod_{p|\det(S_{\xi,N})}\widetilde{f}^p_{S_{\xi,N}}(\alpha_p) q^n,
$$ 
is 
a vector valued modular form of weight $2k$ with type $u_S$. 
Since $A_F(1)=1$, $F(Z)$ is not identically zero. 
This completes the proof. 

\section{Hecke operators}
Karel \cite{Ka1} defined Hecke operators and showed that the Eisenstein series are eigenfunctions. We review his results, and show that the cusp form on $\frak T$ constructed in the previous section is a Hecke eigenform.

Let $I_\bold W$ be the identity operator on $\bold W$. Let $\bold Z$ be the central torus of $GL(\bold W)$, i.e., for any field $K$,
$$\bold Z_K=\{ \lambda I_{\bold W} |\, \text{$\lambda\in K$, $\lambda\ne 0$}\}.
$$
Let $\widetilde{\bold G}=Z\cdot \bold G$. Then $\widetilde{\bold G}$ is a $\Q$-group. Define a rational character $\mu$ on $\widetilde{\bold G}$ by 
$$\{gw_1,gw_2\}=\mu(g)\{w_1,w_w\},\quad \text{for all $w_1, w_2\in \bold W$}.
$$
Then $\mu$ is defined over $\Q$, and $Q(gw)=\mu(g)^2 Q(w)$. 
Let
$S$ be the connected component of the Lie group $\widetilde{\bold G}(\R)$ containing the identity element of $\bold G(\R)$. 
Define 
$$\Psi=\{g\in S |\, g \bold W_{\frak o}\subset \bold W_{\frak o}\}.
$$ 
Since $S$ is a connected component containing the identity element, $\mu(g)>0$ for all $g\in \Psi$. 
Recall that $e=(0,1,0,0)$ and $e'=(0,0,0,1)$ are elements of $\bold W_{\frak o}$, and $\{w_1,w_2\}\in\Bbb Z$ for all $w_1, w_2\in 
\bold W_{\frak o}$.
Hence $\mu(g)=\{ge,ge'\}\in\Bbb Z$. 
Hence we can define,
for each $m\in\Bbb Z$, $m>0$, 
$$\Psi_m=\{g\in\Psi |\, \mu(g)=m\},
$$
and $\Psi=\bigcup_{m=1}^\infty \Psi_m$. 

Fix $k$. If $\rho=z\rho'\in\widetilde{\bold G}(\R)=Z(\R)_+\cdot \bold G(\R)$ and $F$ is a function on $\frak T$, 
let $F(Z)|[\rho]_k=F(\rho' Z) j(\rho',Z)^{-k}$.
If $F$ is holomorphic, then $F|[\rho]_k=F$ for all $\rho\in \Gamma$ precisely when $F$ is a modular form of weight $k$.
Let $F$ be a modular form on $\frak T$ of weight $k$, and define
$$T(m)\cdot F=\sum_{\rho\in \Gamma\backslash\Gamma \Psi_m \Gamma} \rho\cdot F.
$$
Actually, in \cite{Ka1} Karel used $J(g,Z)=j(g,Z)^{-18}$ as an automorphy factor. However,
his result works for $j(g,Z)$ in the exactly same way. 

For later purpose in connection with representation theory, we shall modify Hecke operators for $\bold G(\Q)$.  
For any element $H\in \bold G(\Q)$, we define a modified action of $H$ on $F$ by 
$$H\star F =v_H(\Gamma)^{-\frac{k}{36}}\sum_{\rho\in \Gamma\backslash\Gamma H \Gamma} \rho\cdot F,\ 
v_H(\Gamma):=[H\G{H}^{-1}:\Gamma].$$
    
Then we have
\begin{prop}\label{eisen-hecke} $E_{l,0}(Z)$ is a Hecke eigenform for each Hecke operator $T(m)$. 
In particular it is also an eigenform for any $H\in \bold G(\Q)$ with respect to the modified action $\star$. 
\end{prop}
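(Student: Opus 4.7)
The first assertion is Karel's theorem recalled just above, so I focus on the second. The plan is to pass to the adelic setting and exploit the fact that the adelic Eisenstein section is spherical at every finite place. By Proposition \ref{adelic-Eisen}, $E_{l,0}$ corresponds via (\ref{auto-form}) to the adelic Eisenstein series $E(g,2l-9,\Phi)$ where $\Phi_p=\Phi_p^0$ is the normalized $\bold{G}(\Z_p)$-spherical vector of $I_p(2l-9)$ at every finite $p$. Hence the automorphic form $\tilde E_{l,0}$ is right $K_f$-invariant for $K_f=\prod_p\bold{G}(\Z_p)$, and is an eigenvector of the commutative spherical Hecke algebra $\mathcal H_p = C_c(\bold{G}(\Z_p)\backslash\bold{G}(\Q_p)/\bold{G}(\Z_p))$ at every finite place, the eigencharacter being determined by the Satake parameter of $I_p(2l-9)$.

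Next I would match the classical operator $H\star$ with adelic Hecke convolution. Fix $H\in\bold{G}(\Q)$, view it diagonally as $H_{\mathrm{fin}}\in\bold{G}(\A_f)$, and use strong approximation $\bold{G}(\A)=\bold{G}(\Q)\bold{G}(\R)K_f$ to lift the classical decomposition $\Gamma H\Gamma=\bigsqcup_i\Gamma H_i$ to the adelic decomposition $K_fH_{\mathrm{fin}}K_f=\bigsqcup_i K_fH_{i,\mathrm{fin}}$. Tracking the normalization in (\ref{auto-form}), the operator $H\star$ on $\tilde E_{l,0}$ becomes, up to the scalar $v_H(\Gamma)^{-k/36}$, convolution against $\mathbf{1}_{K_fH_{\mathrm{fin}}K_f}$. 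Since $\tilde E_{l,0}$ is spherical at every finite place, this convolution yields a scalar multiple of itself, and translating back by the correspondence $F\leftrightarrow\tilde F$ gives $H\star E_{l,0}=\lambda_H E_{l,0}$, with $\lambda_H$ expressible as the Satake transform of $\mathbf{1}_{K_fH_{\mathrm{fin}}K_f}$ evaluated at the Satake parameter of $I_p(2l-9)$.

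The principal obstacle is verifying that the precise normalizing constant $v_H(\Gamma)^{-k/36}$ is exactly what makes this identification work. One must reconcile the weight-$k$ automorphy factor $j(\cdot,Z)^k$ on $\frak T$ with the induction normalization $\delta_{\bold P}^{1/2}|\nu|^{2l-9}$, in which $\delta_{\bold P}(mn)=|\nu(m)|_\A^{18}$ appears, and account for the quadratic relation $\mu(\lambda g)=\lambda^2\mu(g)$ between the character $\mu$ on $\widetilde{\bold G}$ and the similitude $\nu$ on $\bold M$; together these produce the exponent $36=2\cdot 18$. Once this bookkeeping is settled, sphericity of $\tilde E_{l,0}$ finishes the proof, and the eigenvalue $\lambda_H$ is given explicitly by the Satake transform.
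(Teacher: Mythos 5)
Your proposal is correct, but it reaches the conclusion by a genuinely different route from the paper. The paper gives no argument at all here: the first assertion is quoted from Karel's thesis \cite{Ka1}, and the second is presented as an immediate consequence, which is actually a little delicate, since being an eigenform for $T(m)=\sum_{\rho\in\Gamma\ba\Gamma\Psi_m\Gamma}\rho$ does not formally imply being an eigenform for each single double coset $\Gamma H\Gamma$; one needs either Karel's computation applied to individual double cosets or an independent argument such as yours. Your adelic argument supplies exactly that: by Proposition \ref{adelic-Eisen} the section $\Phi$ is the normalized spherical vector at every finite place, the space of $K_p$-fixed vectors in $I_p(2l-9)$ is one-dimensional (Iwasawa decomposition $\bold G=\bold P K$), so convolution by $\mathbf{1}_{K_pH_pK_p}$ acts on $\Phi_p$ by a scalar, and summing over $\bold P(\Q)\ba\bold G(\Q)$ (legitimate since $2l>18$ puts $s=2l-9$ in the range of absolute convergence) shows $E(g,2l-9,\Phi)$ is an eigenfunction; strong approximation $\bold G(\A)=\bold G(\Q)\bold G(\R)K$ then transfers this to the classical double coset operator. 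This is consistent with, and in fact is what underlies, the paper's later identification of $\pi_p$ with the degenerate principal series in Section 11. Two minor points: since the proposition asserts only the eigenform property, the constant $v_H(\Gamma)^{-k/36}$ is a harmless nonzero scalar, so the ``principal obstacle'' you describe is moot for this statement (the precise normalization only matters later, when the eigenvalues are matched with those of the spherical vector of ${\rm Ind}_{\bold P(\Q_p)}^{\bold G(\Q_p)}\,|\nu|^{2s_p}$); and the bookkeeping of left versus right cosets and inverses in passing between $K_fH_{\mathrm{fin}}K_f$ and $\Gamma H\Gamma$ should be pinned down, though it is routine. What your approach buys is a uniform, representation-theoretic proof independent of Karel's classical computation; what the paper's citation buys is brevity and explicit classical eigenvalue formulas of the kind used in the proof of Theorem \ref{hecke-eigen}.
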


For any positive integer $k\ge 10$, let $f(\tau)=\ds\sum_{n=1}^\infty c(n)q^n$ be a Hecke eigenform of 
weight $2k-8$ with respect to $SL_2(\Z)$ with 
$c(p)=p^{\frac{2k-9}{2}}(\alpha_p+\alpha^{-1}_p)$. Let $F(Z)=\ds\sum_{T\in \frak J(\Z)_+} A_F(T)\bold e((T,Z)),\ Z\in \frak T$ be the modular form on $\frak T$ which is constructed from $f$ in 
previous section. Then by imitating Ikeda's idea in Section 11 of \cite{Ik3} we will prove that 
$F(Z)$ is 
a Hecke eigenform for any $H\in \bold G(\Q)$. 
Recall the normalized Eisenstein series $$\mathcal{E}_{2k',0}(Z)=
\sum_{T\in \frak J(\Z)_+}\widetilde{a}_{2k'}(T)\bold{e}((T,Z)),\quad 
\widetilde{a}_{2k'}(T)=\det(T)^{\frac{2k'-9}{2}}\prod_{p|\det(T)}\widetilde{f}^p_T(p^{\frac{2k'-9}{2}})$$ of weight $2k'$ which is also 
an eigenform for any $H\in \bold G(\Q)$ by Proposition \ref{eisen-hecke}. 
For each $H\in \bold G(\Q)$, by using $\bold G(\Q)=P(\Q)\bold G(\Z)$ (see page 532, line -4 of \cite{B}), 
one can choose $\{p_{n_i}\cdot m_i\}_{i=1}^r,\ n_i\in \frak J(\Q),\ m_i\in M(\Q)_+$ as the complete representatives of 
$\Gamma\backslash\Gamma H\Gamma$. Here $M(\Q)_+$ is the subset of $M(\Q)$ consisting of $g$ with $\nu(g)>0$. 
Then it is easy to see that $v_H(\Gamma)^{-\frac{1}{36}}=\nu(m_i)^{\frac{1}{2}}$ for each $i$.
Henceforth we settle the convention that 
for each $T\in \frak J(\Z)_+$ and each $m\in M(\Q)$,  put 
$\widetilde{f}_{mT}(\{X_p\}_p)=0$ if $mT\not\in \frak J(\Z)_+$.  
Then one has 
$$\begin{array}{ll}
H\star \mathcal{E}_{2k',0}(Z)&=v_H(\Gamma)^{-\frac{2k'}{36}}\ds\sum_{i=1}^r (p_{n_i}\cdot m_i)\cdot \mathcal{E}_{2k',0}(Z)\\
&=
\ds\sum_{T\in \frak J(\Z)_+} \sum_{i=1}^r\nu(m_i)^{k'} \det(T)^{\frac{2k'-9}{2}}
\prod_{p|\det(T)}\widetilde{f}^p_T(p^{\frac{2k'-9}{2}}) \bold e((T,m_i Z+n_i))\\
&({\rm use}\ (T,m_iZ)=((m^\ast_i)^{-1}T,Z)\ {\rm by}\ (\ref{identity})\ {\rm and}\ \det(m^\ast_iT)=\nu(m_i)^{-1}\det(T).)  \\
&=
\ds\sum_{T\in \frak J(\Z)_+} \sum_{i=1}^r\nu(m_i)^{\frac{9}{2}} \bold e((m^\ast_i T,n_i))
\det(T)^{\frac{2k'-9}{2}}\prod_{p|\det(m^\ast_i T)}\widetilde{f}^p_{m^\ast_i T}(p^{\frac{2k'-9}{2}}) \bold e((T,Z)). 
\end{array}$$
From this, the $T$-th Fourier coefficient of $H\star \mathcal{E}_{2k',0}$ is 
$$\sum_{i=1}^r\nu(m_i)^{\frac{9}{2}} \bold e((m^\ast_i T,n_i))
\det(T)^{\frac{2k'-9}{2}}\prod_{p|\det(m^\ast_i T)}\widetilde{f}^p_{m^\ast_i T}(p^{\frac{2k'-9}{2}}).$$
Put $$\alpha_H(X):=\sum_{i=1}^r\nu(m_i)^{\frac{9}{2}} \bold e((m^\ast_i E,n_i))
\prod_{p|\det(m^\ast_i  E)}\widetilde{f}^p_{m^\ast_i  E}(X_p),\ X=\{X_p\}_p$$
which defines an element of $\otimes'_p\C[X_p,X^{-1}_p]$. Here $E={\rm diag}(1,1,1)\in \frak J(\Q)$.  
Noting $\widetilde{a}_{2k'}(E)=1\not=0$, by Proposition \ref{eisen-hecke} one has 
$$\alpha_H(\{p^{\frac{2k'-9}{2}}\}_p)\prod_{p|\det(T)}\widetilde{f}^p_T(p^{\frac{2k'-9}{2}})
=\sum_{i=1}^r\nu(m_i)^{\frac{9}{2}} \bold e((m^\ast_i T,n_i))
\prod_{p|\det(m^\ast_i T)}\widetilde{f}^p_{m^\ast_i T}(p^{\frac{2k'-9}{2}}).$$
By Lemma 10.1 of \cite{Ik1}, one has the equality in $\otimes'_p\C[X_p,X^{-1}_p]$ 
$$\alpha_H(X)\prod_{p|\det(T)}\widetilde{f}^p_T(X_p)
=\sum_{i=1}^r\nu(m_i)^{\frac{9}{2}} \bold e((m^\ast_i T,n_i))
\prod_{p|\det(m^\ast_i T)}\widetilde{f}^p_{m^\ast_i T}(X_p),\ X=\{X_p\}_p.$$   
Then one has 
$$
\begin{array}{rl}
H\star F(Z)=&
\ds\sum_{T\in \frak J(\Z)_+} \sum_{i=1}^r\nu(m_i)^{\frac{9}{2}} \bold e((m^\ast_i T,n_i))
\det(T)^{\frac{2k'-9}{2}}\prod_{p|\det(m^\ast_i T)}\widetilde{f}^p_{m^\ast_i T}(\alpha_p) \bold e((T,Z))\\
=&
\ds\sum_{T\in \frak J(\Z)_+ }\alpha_H(\{\alpha_p\}_p)
\det(T)^{\frac{2k'-9}{2}}\prod_{p|\det(T)}\widetilde{f}^p_{T}(\alpha_p) \bold e((T,Z))
=\alpha_H(\{\alpha_p\}_p)F(Z). 
\end{array}
$$
Hence we have proved the following:
\begin{theorem}\label{hecke-eigen} $F(Z)$ is a Hecke eigenform for $\bold{G}(\Q)$ with respect to the action $\star$.
\end{theorem}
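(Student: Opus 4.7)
The plan is to transfer the Hecke eigenform property from the classical Eisenstein series $\mathcal{E}_{2k',0}(Z)$ (established in Proposition \ref{eisen-hecke}) to the lift $F(Z)$ by matching Fourier coefficients and then promoting a family of numerical identities to an identity of Laurent polynomials in the Hecke variables, following Ikeda's strategy from \cite{Ik1,Ik3}.

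First, using $\bold{G}(\Q) = \bold{P}(\Q)\bold{G}(\Z)$ from \cite{B}, I would choose representatives $\{p_{n_i} \cdot m_i\}_{i=1}^r$ for $\Gamma\backslash\Gamma H \Gamma$ with $n_i \in \frak{J}(\Q)$ and $m_i \in \bold{M}(\Q)_+$. A short computation gives $v_H(\Gamma)^{-\frac{1}{36}} = \nu(m_i)^{\frac{1}{2}}$ (independent of $i$), so applying $H \star$ to $\mathcal{E}_{2k',0}$ amounts to understanding the effect of translation by $n_i$ and pullback by $m_i$ on the Fourier expansion. Using the self-adjointness identity $(T, m_iZ) = ((m_i^*)^{-1}T, Z)$ from (\ref{identity}) and $\det(m_i^* T) = \nu(m_i)^{-1}\det(T)$, the $T$-th Fourier coefficient of $H \star \mathcal{E}_{2k',0}$ takes the form
$$\det(T)^{\frac{2k'-9}{2}} \sum_{i=1}^r \nu(m_i)^{\frac{9}{2}}\, \bold{e}((m_i^* T, n_i)) \prod_{p \mid \det(m_i^* T)} \widetilde{f}^p_{m_i^* T}(p^{\frac{2k'-9}{2}}),$$
with the convention $\widetilde{f}^p_{mT} \equiv 0$ when $mT \notin \frak{J}(\Z)_+$.

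Next, Proposition \ref{eisen-hecke} gives $H \star \mathcal{E}_{2k',0} = \alpha_H^{(k')}\, \mathcal{E}_{2k',0}$ for some scalar $\alpha_H^{(k')}$. Evaluating the coefficient formula at $T = E = \di(1,1,1)$, where $\widetilde{a}_{2k'}(E) = 1 \ne 0$, reads off $\alpha_H^{(k')}$ as the value at $X_p = p^{\frac{2k'-9}{2}}$ of
$$\alpha_H(X) := \sum_{i=1}^r \nu(m_i)^{\frac{9}{2}}\, \bold{e}((m_i^* E, n_i)) \prod_{p \mid \det(m_i^* E)} \widetilde{f}^p_{m_i^* E}(X_p) \in \otimes'_p\, \C[X_p, X_p^{-1}].$$
Matching $T$-th coefficients on both sides of the eigenvalue equation then produces a numerical identity valid for every sufficiently large even $k'$. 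Applying Lemma 10.1 of \cite{Ik1}, this family of pointwise equalities lifts to the single identity of Laurent polynomials
$$\alpha_H(X) \prod_{p \mid \det(T)} \widetilde{f}^p_T(X_p) = \sum_{i=1}^r \nu(m_i)^{\frac{9}{2}}\, \bold{e}((m_i^* T, n_i)) \prod_{p \mid \det(m_i^* T)} \widetilde{f}^p_{m_i^* T}(X_p).$$

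Finally I would specialize $X_p \mapsto \alpha_p$, the Satake parameters attached to $f$, and sum against $\det(T)^{\frac{2k-9}{2}}\bold{e}((T,Z))$ over $T \in \frak{J}(\Z)_+$. The right-hand side then reassembles, by the very definition of $F$ and $H \star$, into $H \star F(Z)$, while the left-hand side collapses to $\alpha_H(\{\alpha_p\}_p) F(Z)$, giving the desired eigenfunction relation. The crux of the argument is the polynomial-identity promotion via \cite[Lemma 10.1]{Ik1}: it is the mechanism that allows the Hecke eigenvalue relation verified at the single sequence of points $X_p = p^{\frac{2k'-9}{2}}$ to propagate to the arithmetic values $X_p = \alpha_p$, and it is essentially the entire reason an Ikeda-type lift inherits Hecke-eigenness from the standard Siegel Eisenstein series.
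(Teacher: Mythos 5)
Your proposal is correct and follows essentially the same route as the paper's proof: choosing coset representatives $p_{n_i}m_i$ via $\bold G(\Q)=\bold P(\Q)\bold G(\Z)$, reading off the eigenvalue from the $E$-th Fourier coefficient of $H\star\mathcal E_{2k',0}$, promoting the resulting numerical identities to a Laurent-polynomial identity via Lemma 10.1 of \cite{Ik1}, and specializing $X_p\mapsto\alpha_p$. No substantive differences.
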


\section{The degree $56$ standard L-function}
In this section we will compute the standard L-function of Hecke eigenforms constructed in the previous section 
and the Eisenstein series respectively. 
Let $F=F(Z)$ be the cusp form in Theorem \ref{hecke-eigen} and $\widetilde{F}$ be the automorphic form on 
$\bold G(\Bbb A)$ attached to $F$ (see (\ref{auto-form})). 
Let $\pi_F$ be the cuspidal representation of $\bold G(\Bbb A)$ attached to $\widetilde{F}$. 
Since $F$ is a Hecke eigenform, one has the decomposition $\pi_F=\pi_\infty\otimes \otimes_p' \pi_p$. 
Then $\pi_\infty$ is a holomorphic discrete series of the lowest weight $2k$ associated to $-2k\varpi_7$ in the notation of \cite{Bour}.
We note that $-2k\varpi_7$ parametrizes a holomorphic discrete series when $2k>17$ (cf. \cite{knapp}, page 158).
Since $\pi_p$ is unramified for each prime $p$, it has a spherical vector whose Hecke eigenvalue for 
each element of $\bold{G}(\Q_p)$ coincides with 
that of a spherical vector in ${\rm Ind}_{\bold P(\Q_p)}^{\bold{G}(\Q_p)}\: |\nu(g)|^{2s_p}$ where $p^{s_p}=\alpha_p$. (This is clear from 
the proof of Theorem \ref{hecke-eigen}. Notice $2s_p$, not $s_p$. We can see it from Corollary 6.2 and Proposition 6.4. We are replacing $\frac {2k-9}2$ by $s_p$ in Corollary 6.2.)    
Then by Proposition 2.2.2 of \cite{casselman} and Proposition \ref{W}, 
$$\pi_p\simeq {\rm Ind}_{\bold P(\Q_p)}^{\bold{G}(\Q_p)}\: |\nu(g)|^{2s_p},
$$ 
for any finite place $p$. 

In order to compute the standard $L$-function of $\pi_F$, we use Langlands-Shahidi method. 
Since $\bold G(\Bbb Q_p)$ is the split group of type $E_7$, we can compute its local $L$-factor. 
In the notation of \cite{kim1}, it is in section 2.7.8. We consider the split exceptional group of type $E_8$, and its parabolic subgroup $R$ whose Levi subgroup is $GE_7$, and its Borel subgroup $B$.
Since 
$${\rm Ind}_{R(\Q_p)}^{E_8(\Q_p)}\: \pi_p\otimes \exp(s\tilde\alpha, H_R(\,))={\rm Ind}_{B(\Q_p)}^{E_8(\Q_p)}\: \exp(\chi, H_B(\, )),
$$
where $\tilde\alpha=e_1-e_9$, and 
$\chi=s(e_1-e_9)+s_p(-e_1+2e_2-e_9)+(8 e_3+7 e_4+ 6 e_5+ 5 e_6 +4 e_7+3 e_8)$. Here $\rho_{E_6}=8 e_3+7 e_4+ 6 e_5+ 5 e_6 +4 e_7+3 e_8$ is the half-sum of positive roots of $E_6$. Then one can see that the unipotent radical of $R$ is generated by 57 roots
\begin{eqnarray*}
&& \text{$e_i-e_9$ for $i=1,...,8$, and $e_1-e_j$ for $j=2,...,8$}\\ 
&&\text{$e_1+e_j+e_k$ for $2\leq j<k\leq 8$, and $-(e_i+e_j+e_9)$ for $2\leq i<j\leq 8$}.
\end{eqnarray*}
Then $e_1-e_9$ gives rise to $1-p^{-2s}$, and the remaining 56 roots give rise to the following local factors:
\begin{eqnarray*}
&& e_1-e_3, 1-\alpha_p p^{8-s}; \ e_1+e_2+e_8, 1-\alpha_p^{-1} p^{8-s}; \ e_3-e_9, 1-\alpha_p^{-1} p^{-8-s}; \ -(e_2+e_8+e_9), 1-\alpha_p p^{-8-s}\\
&& e_1-e_4, 1-\alpha_p p^{7-s}; \ e_1+e_2+e_7, 1-\alpha_p^{-1} p^{7-s}; \ e_4-e_9, 1-\alpha_p^{-1} p^{-7-s};\ -(e_2+e_7+e_9), 1-\alpha_p p^{-7-s}\\
&& e_1-e_5, 1-\alpha_p p^{6-s}; \ e_1+e_2+e_6: 1-\alpha_p^{-1} p^{6-s}; \ e_5-e_9: \ 1-\alpha_p^{-1} p^{-6-s};\ -(e_2+e_6+e_9), 1-\alpha_p p^{-6-s}\\
&& e_1-e_6: \ 1-\alpha_p p^{5-s}; \ e_1+e_2+e_5, 1-\alpha_p^{-1} p^{5-s}; \ e_6-e_9, 1-\alpha_p^{-1} p^{-5-s};\ -(e_2+e_5+e_9), 1-\alpha_p p^{-5-s}\\
&& e_1-e_7, e_1+e_7+e_8, 1-\alpha_p p^{4-s}; \ e_1+e_2+e_4, -(e_3+e_4+e_9), 1-\alpha_p^{-1} p^{4-s} \\
&& e_1-e_8, e_1+e_6+e_8, 1-\alpha_p p^{3-s}; \ e_1+e_2+e_3,  -(e_3+e_5+e_9), 1-\alpha_p^{-1} p^{3-s} \\
&& e_1+e_6+e_7,  e_1+e_5+e_8, 1-\alpha_p p^{2-s}; \ -(e_3+e_6+e_9), -(e_4+e_5+e_9), 1-\alpha_p^{-1} p^{2-s} \\
&& e_1+e_5+e_7,  e_1+e_4+e_8, 1-\alpha_p p^{1-s}; \ -(e_3+e_7+e_9), -(e_4+e_6+e_9), 1-\alpha_p^{-1} p^{1-s} \\
&& e_1+e_3+e_8,  e_1+e_4+e_7, 1-\alpha_p p^{-s}; \ -(e_3+e_8+e_9), -(e_4+e_7+e_9), 1-\alpha_p^{-1} p^{-s} \\
&& e_1+e_3+e_7,  e_1+e_4+e_6, 1-\alpha_p p^{-1-s}; \ -(e_4+e_8+e_9),\ -(e_5+e_7+e_9), 1-\alpha_p^{-1} p^{-1-s} \\
&& e_1+e_3+e_6,  e_1+e_4+e_5, 1-\alpha_p p^{-2-s}; \ -(e_5+e_8+e_9),\ -(e_6+e_7+e_9), 1-\alpha_p^{-1} p^{-2-s} \\
&& e_1+e_3+e_5,  -(e_2+e_3+e_9),  1-\alpha_p p^{-3-s}; \ e_8-e_9, -(e_6+e_8+e_9), 1-\alpha_p^{-1} p^{-3-s} \\
&& e_1+e_3+e_4,  -(e_2+e_4+e_9), 1-\alpha_p p^{-4-s}; \ e_7-e_9, -(e_7+e_8+e_9), 1-\alpha_p^{-1} p^{-4-s} \\
&& e_1-e_2, 1-\alpha_p^3 p^{-s}; \ e_2-e_9, 1-\alpha_p^{-3} p^{-s}; \ e_1+e_5+e_6, 1-\alpha_p p^{-s}; \ -(e_5+e_6+e_9), 1-\alpha_p^{-1} p^{-s}
\end{eqnarray*}
Hence we have the degree 56 local $L$-function:
\begin{eqnarray*}
&& (1-\alpha_p p^{-s})^2 (1-\alpha_p^{-1} p^{-s})^2
\prod_{i=0}^3 (1-\alpha_p^{3-2i} p^{-s}) \\
&&\cdot \prod_{i=5}^8 (1-\alpha_p p^{\pm i-s})(1-\alpha_p^{-1} p^{\pm i-s})
\prod_{i=1}^4 (1-\alpha_p p^{\pm i-s})^2(1-\alpha_p^{-1} p^{\pm i-s})^2.
\end{eqnarray*}
Therefore, we have proved

\begin{theorem} The degree $56$ standard L-function $L(s,\pi_F,St)$ of $\pi_F$ is given by
$$L(s,\pi_F,St)=L(s,{\rm Sym}^3 \pi_f)L(s,\pi_f)^2 \prod_{i=1}^4 L(s\pm i,\pi_f)^2 \prod_{i=5}^8 L(s\pm i,\pi_f),
$$
where $L(s,{\rm Sym}^3 \pi_f)$ is the third symmetric power $L$-function.
\end{theorem}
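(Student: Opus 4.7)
The plan is to compute each local L-factor by the Langlands-Shahidi method and then take the Euler product. First I would pin down the local components of $\pi_F$. The archimedean component $\pi_\infty$ must be the holomorphic discrete series of lowest weight $2k$ parametrized by $-2k\varpi_7$; this is well-defined since $2k \ge 20 > 17$, and the weight is forced by the transformation law of $F$. At each finite place $p$, Theorem \ref{hecke-eigen} implies $\pi_p$ is unramified with spherical Hecke eigenvalues matching those of the spherical vector in the degenerate principal series $I_p(2s_p) = \mathrm{Ind}_{\bold P(\Q_p)}^{\bold G(\Q_p)} |\nu(g)|^{2s_p}$, where $p^{s_p} = \alpha_p$; this parameter can be read off from Proposition \ref{adelic-Eisen} by replacing $\frac{2l-9}{2}$ with $s_p$. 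Since $|\alpha_p| = 1$, the parameter $2s_p$ is purely imaginary and thus avoids the reducibility locus $\{\pm 1, \pm 5, \pm 9\}$ of Proposition \ref{W}, so $I_p(2s_p)$ is irreducible, and Casselman's theorem forces $\pi_p \cong I_p(2s_p)$.

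Next I would set up Langlands-Shahidi for the $56$-dimensional standard representation of $E_7$. Embed $E_7$ as the semisimple part of the Levi of a maximal parabolic $R$ of the split $E_8$, and let $B \subset R$ be the Borel. The Satake parameter of $\pi_p$ extends (together with the auxiliary parameter $s$) to a character on the Levi of $R$, and by transitivity of parabolic induction
$$\mathrm{Ind}_{R(\Q_p)}^{E_8(\Q_p)} \pi_p \otimes \exp(s\tilde\alpha, H_R(\cdot)) = \mathrm{Ind}_{B(\Q_p)}^{E_8(\Q_p)} \exp(\chi, H_B(\cdot)),$$
with $\tilde\alpha = e_1 - e_9$ and
$$\chi = s(e_1 - e_9) + s_p(-e_1 + 2e_2 - e_9) + \rho_{E_6},$$
where $\rho_{E_6} = 8e_3 + 7e_4 + 6e_5 + 5e_6 + 4e_7 + 3e_8$ is the half-sum of positive $E_6$ roots. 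The $\rho_{E_6}$ shift records that $\pi_p$ is itself built by further inducing a character from the center of $GE_6 \subset GE_7$. The local L-factor is then obtained by pairing $\chi$ with the roots in the unipotent radical of $R$: there are $57$ of them, the distinguished one being $\tilde\alpha$ itself, which produces the factor $1 - p^{-2s}$ corresponding to the unwanted $\mathrm{GL}_1$ piece and discarded, and the remaining $56$ producing the standard L-factor.

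Third, I would enumerate these $57$ roots explicitly as $e_1 - e_j$ for $j = 2, \ldots, 8$, $e_i - e_9$ for $i = 2, \ldots, 8$, $e_1 + e_i + e_j$ for $2 \le i < j \le 8$, and $-(e_i + e_j + e_9)$ for $2 \le i < j \le 8$, plus $\tilde\alpha$. Organizing the $56$ non-$\tilde\alpha$ roots by the pairing with $-e_1 + 2e_2 - e_9$ (which takes values in $\{\pm 1, \pm 3\}$, hence contributes $\alpha_p^{\pm 1}$ or $\alpha_p^{\pm 3}$) and by the $\rho_{E_6}$-shift (an integer in $[-8, 8]$), I expect to find four roots at shift $0$ producing the symmetric cube bundle $\prod_{i=0}^3 (1 - \alpha_p^{3-2i} p^{-s})$, two more roots at shift $0$ producing $(1 - \alpha_p p^{-s})^2(1 - \alpha_p^{-1} p^{-s})^2$, eight roots per $i \in \{1,2,3,4\}$ producing $(1 - \alpha_p p^{\pm i - s})^2(1 - \alpha_p^{-1} p^{\pm i - s})^2$, and four roots per $i \in \{5,6,7,8\}$ producing $(1 - \alpha_p p^{\pm i - s})(1 - \alpha_p^{-1} p^{\pm i - s})$. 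The degrees sum correctly to $4 + 4 + 32 + 16 = 56$, and taking reciprocals identifies these as the local Euler factors of $L(s, \mathrm{Sym}^3 \pi_f)$, $L(s, \pi_f)^2$, $L(s \pm i, \pi_f)^2$ for $1 \le i \le 4$, and $L(s \pm i, \pi_f)$ for $5 \le i \le 8$, respectively.

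Finally, taking the Euler product over all primes assembles the claimed identity. The main obstacle is the combinatorial accounting in the third step: one must fix a compatible realization of $E_7 \subset E_8$, identify exactly which $E_8$ roots lie in the unipotent radical of $R$ (as opposed to the Levi), evaluate $\chi$ on each, and tally multiplicities. A single miscount would disrupt the symmetric-cube / shifted-translate pattern, so this step requires careful bookkeeping, to be cross-checked against the degree total $56$.
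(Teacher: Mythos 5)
Your proposal follows the paper's proof essentially verbatim: the same identification of $\pi_p$ with the irreducible degenerate principal series ${\rm Ind}_{\bold P(\Q_p)}^{\bold G(\Q_p)}|\nu(g)|^{2s_p}$ via Casselman and the irreducibility result, the same Langlands--Shahidi setup inside split $E_8$ with the character $\chi=s(e_1-e_9)+s_p(-e_1+2e_2-e_9)+\rho_{E_6}$, and the same enumeration of the $57$ roots of the unipotent radical with $e_1-e_9$ contributing the discarded factor $1-p^{-2s}$. The only slip is describing the block $(1-\alpha_p p^{-s})^2(1-\alpha_p^{-1}p^{-s})^2$ as coming from ``two more roots'' at shift $0$ --- it comes from four roots (namely $e_1+e_3+e_8$, $e_1+e_4+e_7$, $-(e_3+e_8+e_9)$, $-(e_4+e_7+e_9)$), which is consistent with your own degree tally $4+4+32+16=56$.
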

Let $\Gamma_\C(s)=2(2\pi)^{-s}\Gamma(s)$.
Then the local $L$-factor at $\infty$ is given by
$$L(s,\pi_\infty,St)=\Gamma_\C(s+\tfrac {3(2k-9)}2)\Gamma_\C(s+\tfrac {2k-9}2) \Gamma_\C(s+\tfrac {2k-9}2)^2 
\prod_{i=1}^4 \Gamma_\C(s+\tfrac {2k-9}2\pm i)^2 \prod_{i=5}^8 \Gamma_\C(s+\tfrac {2k-9}2\pm i),
$$
and the completed $L$-function satisfies the functional equation
$$\Lambda(s,\pi_F,St)=L(s,\pi_\infty,St)L(s,\pi_F,St)=- \Lambda(1-s,\pi_F,St),
$$
Note that the root number is $-1$, since the root number of $L(s, {\rm Sym}^3 \pi_f)$ is $-1$ \cite{CM}.

We have also proved that the standard $L$-function $L(s,E_{2l,0}(Z),St)$ of $E_{2l,0}(Z)$ is 
\begin{theorem}
\begin{eqnarray*} 
&& L(s,E_{2l,0}(Z),St)=\zeta(s+l-\tfrac {9}2)^2 \zeta(s-l+\tfrac {9}2)^2 \zeta(s-3l+\tfrac {27}2) \zeta(s-l+\tfrac {9}2) \zeta(s+l-\tfrac {9}2)
\zeta(s+3l-\tfrac {27}2) \\
&&\cdot \prod_{i=5}^8 \zeta(s\pm i-l+\tfrac {9}2) \zeta(s\pm i +l-\tfrac {9}2) \prod_{i=1}^4 \zeta(s\pm i-l+\tfrac {9}2)^2 \zeta(s\pm i+l-\tfrac {l}2)^2.
\end{eqnarray*}
\end{theorem}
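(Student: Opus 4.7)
The plan is to argue that this is a direct corollary of the computation already carried out in the preceding theorem, together with Proposition~\ref{adelic-Eisen} and Proposition~\ref{W}. Namely, by Proposition~\ref{adelic-Eisen}, the classical Eisenstein series $E_{2l,0}(Z)$ corresponds to the adelic Eisenstein series attached to a standard section of $I(2l-9)$. Let $\pi^{\mathrm{Eis}}$ denote the automorphic representation generated by this form, with local components $\pi_p^{\mathrm{Eis}}$. For each finite prime $p$, Proposition~\ref{W} asserts that $I_p(2l-9)$ is irreducible (since $2l-9$ is an odd integer distinct from $\pm 1,\pm 5,\pm 9$ for $l\ge 10$; the edge cases are handled similarly by taking the spherical constituent), so
$$
\pi_p^{\mathrm{Eis}}\;\simeq\;\mathrm{Ind}_{\bold P(\Q_p)}^{\bold G(\Q_p)}\,|\nu(g)|^{2l-9}.
$$
Comparing with the description of $\pi_p$ in the cusp-form case of Theorem~11.1, where $\pi_p\simeq\mathrm{Ind}_{\bold P(\Q_p)}^{\bold G(\Q_p)}\,|\nu(g)|^{2s_p}$ with $p^{s_p}=\alpha_p$, we see that for the Eisenstein series the relevant parameter is
$$
\alpha_p^{\mathrm{Eis}}\;=\;p^{\,l-\tfrac{9}{2}}.
$$

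Next I would run the Langlands--Shahidi computation of the degree $56$ local $L$-factor verbatim as in Theorem~11.1, using the same embedding of $E_7$ into $E_8$ and the same list of $56$ roots contributing to the unipotent radical of the maximal parabolic with Levi $GE_7$. The combinatorics of the roots and the resulting grouping into factors $(1-\alpha_p^{3-2i}p^{-s})$ for $i=0,\dots,3$, together with the factors $(1-\alpha_p^{\pm 1}p^{\pm i-s})$ of appropriate multiplicities, is identical. The only change is to substitute the Satake parameter $\alpha_p=p^{\,l-9/2}$ in every factor and take the product over primes $p$.

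With this substitution, one gets
$$
\prod_{i=0}^{3}(1-\alpha_p^{3-2i}p^{-s})^{-1}\;\longmapsto\;\zeta(s-3l+\tfrac{27}{2})\zeta(s-l+\tfrac{9}{2})\zeta(s+l-\tfrac{9}{2})\zeta(s+3l-\tfrac{27}{2}),
$$
while $(1-\alpha_p p^{-s})^{-2}(1-\alpha_p^{-1}p^{-s})^{-2}$ becomes $\zeta(s+l-\tfrac{9}{2})^{2}\zeta(s-l+\tfrac{9}{2})^{2}$, and the remaining factors $\prod_{i=1}^{4}L(s\pm i,\pi_f)^{2}\prod_{i=5}^{8}L(s\pm i,\pi_f)$ from Theorem~11.1 transform into the corresponding products of shifted Riemann zeta values appearing in the statement. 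Assembling these yields precisely the formula claimed.

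The computation is essentially bookkeeping; the only mild obstacle is to justify that the same Langlands--Shahidi recipe applies to a (possibly degenerate) unramified principal series constituent attached to $E_{2l,0}$ in exactly the same way as to the (tempered) spherical unramified component in the cusp-form case. This is guaranteed because the local $L$-factor is defined purely in terms of the Satake parameter through the $56$-dimensional representation of the dual group, and the list of roots appearing in the Levi decomposition is independent of whether the parameter is tempered or not.
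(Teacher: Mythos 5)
Your proposal is correct and is essentially the paper's own argument: the paper derives this theorem by running the identical degree-$56$ Langlands--Shahidi root computation and substituting the Satake parameter $\alpha_p=p^{(2l-9)/2}$ coming from the identification of $E_{2l,0}$ with the adelic Eisenstein series attached to $I(2l-9)$ (Proposition 6.11), exactly as you describe. (Note in passing that the exponent $\tfrac{l}{2}$ in the last factor of the stated formula is a typo for $\tfrac{9}{2}$, consistent with your substitution.)
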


\begin{remark} 
We write the degree 56 standard $L$-function of $\pi_F$ as
$$L(s,\pi_F,St)=L(s,Sym^3 \pi_f)\prod_{i=-4}^4 L(s+i,\pi_f) \prod_{i=-8}^8 L(s+i,\pi_f).
$$
This suggests the following parametrization of $\pi_F$: 
Let $\mathcal L$ be the (hypothetical) Langlands group over $\Q$, and let $\rho_f : \mathcal L\lra SL_2(\C)$ be the 2-dimensional irreducible representation of $\mathcal L$ corresponding to $\pi_f$. 
Let $Sym^n$ be the irreducible $(n+1)$-dimensional representation of $SL_2(\C)$. Note that $Im(Sym^3)\subset Sp_4(\C)$. Then 

$\rho_f\boxtimes Sym^{16}: \mathcal L\times SL_2(\Bbb C)\longrightarrow Sp_{34}(\Bbb C)$, and
$\rho_f\boxtimes Sym^{8}: \mathcal L\times SL_2(\Bbb C)\longrightarrow Sp_{18}(\Bbb C)$.

\noindent Let $Sym^3\rho_f: \mathcal L\times SL_2(\Bbb C)\longrightarrow Sp_4(\Bbb C)$ be the parameter of $Sym^3\pi_f$, where it is trivial on $SL_2(\Bbb C)$. Consider the parameter
$$
\rho=Sym^3\rho_f\oplus \rho_f\boxtimes Sym^{16}\oplus \rho_f\boxtimes Sym^{8}: \mathcal L\times SL_2(\Bbb C)\longrightarrow 
Sp_4(\Bbb C)\times Sp_{34}(\Bbb C)\times Sp_{18}(\Bbb C)\subset Sp_{56}(\Bbb C).
$$

Note that $E_7(\Bbb C)\subset Sp_{56}(\Bbb C)$. We expect that $\rho$ will factor through $E_7(\Bbb C)$, and give rise to a parameter
$\rho: \mathcal L\times SL_2(\Bbb C)\longrightarrow 
E_7(\Bbb C)$, which parametrizes $\pi_F$.
\end{remark}

\section{Appendix}
In this appendix we will compute the discriminant of some quadratic forms from Section 4.2 and prove the orthogonal relation of theta functions in the proof of Lemma 5.9.
 
Let $S=\begin{pmatrix} a&u\\ \bar u&b\end{pmatrix}\in \frak J_2(K)$ where $K$ is a field whose characteristic is different from 2,3. 
Recall $\det(S)=ab-N(u)$ and the quadratic form $\lambda_S(x,y)=\frac{1}{2}(S,x{}^t\bar{y}+y{}^t\bar{x})$ on $X(K)$ (see (\ref{quadratic})). 
We denote by $\disc(\lambda_S)$ the discriminant of the quadratic form $\lambda_S$, i.e.,
the determinant of the representation matrix of $\lambda_S$. 
Then we have 
\begin{lemma}\label{det} ${\rm disc}(\lambda_S)=\det(S)^8$.
\end{lemma}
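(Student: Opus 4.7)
My plan is to diagonalize $\lambda_S$ by a volume-preserving $K$-linear change of variables on $X(K) = \frak C_K^2$ obtained by ``completing the square'' in the $\frak C_K$-entries, and then read off the discriminant from the resulting orthogonal sum $aN \oplus (\det(S)/a)\,N$.

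First I would unwind the definitions: for $x = \bigl(\begin{smallmatrix}x_1\\ x_2\end{smallmatrix}\bigr) \in X(K)$ the matrix $x\,{}^t\bar{x} = \bigl(\begin{smallmatrix} N(x_1) & x_1\bar{x}_2 \\ x_2\bar{x}_1 & N(x_2)\end{smallmatrix}\bigr)$ lies in $\frak J_2(K)$, and the inner product formula on $\frak J_2(K)$ gives
$$\lambda_S(x,x) = (S, x\,{}^t\bar x) = a\, N(x_1) + b\, N(x_2) + \tr(u\, x_2\, \bar x_1),$$
where the triple product is unambiguous thanks to $\tr((pq)r) = \tr(p(qr))$. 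Assuming $a \ne 0$ (a Zariski-open condition) I would then introduce the $K$-linear change of variables $y_1 := x_1 + a^{-1}(u x_2)$, $y_2 := x_2$, which is block-upper-unipotent in the decomposition $X(K) = \frak C_K \oplus \frak C_K$, hence has determinant $1$ as an automorphism of the $16$-dimensional $K$-space $X(K)$ and preserves the discriminant. Using the polarization identity $N(p+q) = N(p) + N(q) + \tr(p\bar q)$, the Cayley-norm multiplicativity $(uy)(\overline{uy}) = N(u)N(y)$, and the cyclic invariance of the trilinear trace, the cross term $\tr(u x_2 \bar x_1)$ cancels and the form diagonalizes as
$$\lambda_S(x,x) = a\, N(y_1) + \frac{\det(S)}{a}\, N(y_2).$$

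Since $N(\sum x_i e_i) = \sum x_i^2$ has Gram matrix $I_8$ in the standard basis $\{e_0, \ldots, e_7\}$, the orthogonal sum $aN \oplus (\det(S)/a)\,N$ on $X(K) = \frak C_K^2$ has discriminant $a^8\,(\det(S)/a)^8 = \det(S)^8$, which proves the lemma on the Zariski-open locus $\{a \ne 0\}$. Both sides are polynomial in the coordinates $(a, b, u_0, \ldots, u_7)$ of $S$, so the identity extends to all $S \in \frak J_2(K)$ by Zariski density.

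The main obstacle will be managing the non-associativity of $\frak C_K$: both the completion of the square and the cancellation of cross terms depend on the composition-algebra identity $(uy)(\bar y \bar u) = N(u)N(y)$ (a Moufang-type relation equivalent to the multiplicativity of the norm) and on cyclic symmetry of the trilinear trace, $\tr(pqr) = \tr(qrp) = \tr(rpq)$, which follows from $\tr(pq) = \tr(qp)$ combined with $\tr((pq)r) = \tr(p(qr))$. The key identification I would verify, using $\overline{pqr} = \bar r\bar q\bar p$ together with the fact that $\tr(pqr) \in K$ is invariant under conjugation, is that the term $\tr(y_1\,\overline{u y_2})$ coming from expanding $a\,N(x_1)$ equals $\tr(u y_2 \bar y_1)$; this is exactly what forces the cross terms to cancel against those from $\tr(u x_2 \bar x_1)$.
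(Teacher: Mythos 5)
Your proof is correct, but it takes a genuinely different route from the paper's. The paper computes the Gram matrix of $\lambda_S$ in the standard basis $\{e_0,\dots,e_7\}$ of each copy of $\frak C_K$, obtaining $\bigl(\begin{smallmatrix} aI_8 & X\\ {}^tX & bI_8\end{smallmatrix}\bigr)$ with $X=({\rm Tr}(e_i((-e_j)\bar u)))$, reduces the discriminant to $\det(abI_8-{}^tXX)$, and then proves by a combinatorial analysis of the octonion multiplication rules (producing, for each off-diagonal entry, a sign-reversing involution on the index set) that ${}^tXX=N(u)I_8$; this gives $\det(S)^8$ directly for every $S$, with no genericity assumption. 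You instead diagonalize the form intrinsically: completing the square via the unipotent substitution $y_1=x_1+a^{-1}(ux_2)$ turns $\lambda_S$ into $aN\oplus(\det(S)/a)N$, using only the composition-algebra identities $N(uy)=N(u)N(y)$ and the trace relations $\tr(xy)=\tr(yx)$, $\tr((xy)z)=\tr(x(yz))$, $\tr(\bar p)=\tr(p)$ -- all of which the paper records in Section 2 -- and your identification $\tr(x_1\,\overline{ux_2})=\tr(u(x_2\bar x_1))$ is exactly the computation needed to cancel the cross terms. Your approach is cleaner and avoids the multiplication table entirely, at the cost of the auxiliary hypothesis $a\neq 0$; the Zariski-density patch is fine, though to cover an arbitrary field of characteristic $\neq 2,3$ (which could be finite) you should phrase it as a polynomial identity over $\Z[\tfrac12]$ verified at the generic point (or over an infinite extension), or simply note that the case $a=0$, $b\neq 0$ is symmetric and the case $a=b=0$ can be checked directly. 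Since the lemma is only applied with $S\in\frak J_2(\Z)_+$, where $a>0$, this caveat is harmless.
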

\begin{proof}
Let $S=\begin{pmatrix} a&u\\ \bar u&b\end{pmatrix}$, where $a,b\in K$ and $u\in \frak C_K$.
Let $x=\begin{pmatrix} x_1\\ x_2\end{pmatrix}$, $y=\begin{pmatrix} y_1\\ y_2\end{pmatrix}$where $x_1,x_2, y_1,y_2\in 
\frak C_K$.
Let 
$$\lambda_S(x,y)=\frac{1}{2}(S, x {}^t \bar y+y {}^t \bar x)=\frac 12( a(x_1 \bar{y_1}+y_1\bar{x_1})+b(x_2\bar{y_2}+y_2\bar{x_2})
+u( x_2\bar{y_1}+y_2\bar{x_1})+(x_1\bar{y_2}+y_1\bar{x_2})\bar u),
$$
be the bilinear form given by $S$.

For $x\in \frak C_K$, let $x=x_0e_0+\cdots+x_7e_7$. Then with respect to the basis, the matrix of $\lambda_S$ is
$$\begin{pmatrix} aI_8 &X\\ {}^t X& bI_8\end{pmatrix}\in M_{16}(K), \quad X=({\rm Tr}(e_i ((-e_j)\bar u)))_{1\le i,j\le 8}.
$$
Then the discriminant of the bilinear form is the determinant of the above matrix, which is given by
${\rm disc}(\lambda_S)=\det(ab I_8-{}^t X X).$
Now we claim that ${}^t XX$ is a diagonal matrix. Clearly, for each $j$, we have 
$$\sum_{k=0}^7 ({\rm Tr}(e_k(-e_j)\bar u))^2=N(u).
$$
Let $i\ne j$, and consider
\begin{equation}\label{Tr}
\sum_{k=0}^7 ({\rm Tr}(e_k (-e_i))\bar u))({\rm Tr}(e_k (-e_j))\bar u)).
\end{equation}

For a given $e_k$, let $e_k (-e_i)=e_l$ and $e_k (-e_j)=e_{l'}$. Then we claim that
there exists $e_a$ such that $e_a(-e_i)=e_{l'}$ and $e_a(-e_j)=-e_l$. This implies that $(\ref{Tr})=0$. 
Now, from $e_le_i=e_{l'}e_j$, we have
$$e_l=(-e_je_{l'})(-e_i)=(e_je_{l'})e_i=-e_j(e_{l'}e_i).
$$
by non-associativity. So $-e_le_j=e_je_l=e_{l'}e_i$. Let $e_a=e_l(-e_j)=e_{l'}e_i$.
Therefore, 
$\disc(\lambda_S)=\det(S)^8.$
\end{proof}

In order to prove the orthogonal relation in the proof of Lemma 5.9, by the above lemma, we need to consider the following:
Let $n$ be a positive integer and $T$ be a positive definite symmetric matrix of size $n$. 
Assume $T=(t_{ij})_{1\le i\le j\le n}$ is even integral, i.e., $t_{ii}\in \Z$ for $i=1,\ldots,n$ and 
$t_{ij}\in \frac{1}{2}\Z$ for $1\le i<j\le n$. 
For $\lambda\in \Q^n$, we define the theta function on $\mathbb{H}\times\C^n$ by 
$$\theta_{[\lambda]}(T;\tau,z)=\sum_{x\in\Z^n}\bold{e}({}^t(x+\lambda)T(x+\lambda)\tau+2{}^t(x+\lambda)Tz),\ 
(\tau,z)\in\mathbb{H}\times\C^n,\ \bold{e}(\ast)=e^{2\pi\sqrt{-1}\ast},
$$ 
where $[\lambda]$ stands for the image of $\lambda$ under the natural projection $\Q^n\lra \Q^n/\Z^n$ and 
the definition of the above theta function depends only on $[\lambda]$. 
Let $\Lambda_T$ be a complete representative of $(2T)^{-1}\Z^n/\Z^n$.  
\begin{lemma}\label{ortho} For any $\lambda,\mu\in \Lambda_T$, the following orthogonal relation holds: 
$$\ds\int_{(\C/\Z+\tau\Z)^n}
 \theta_{[\lambda]}(T;\tau,z)\overline{\theta_{[\mu]}(T;\tau,z)}e^{-4\pi({\rm Im} \tau)^{-1}
T[{\rm Im} z]}dz
=\left\{
\begin{array}{ll}
2^{-n}\det(T)^{-\frac{1}{2}}({\rm Im} \tau)^{\frac{n}{2}} & {\rm if}\ \lambda=\mu \\
0& {\rm otherwise}.
\end{array}\right.
$$ 
\end{lemma}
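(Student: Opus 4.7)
The plan is to reduce the integral to a Gaussian by choosing a convenient real parametrization of the fundamental domain. Parametrize $z \in \C^n/(\Z^n + \tau\Z^n)$ by $z = u + \tau w$ with $u, w \in [0,1)^n$; then ${\rm Im}\,z = ({\rm Im}\,\tau)w$, so the weight factor becomes $e^{-4\pi({\rm Im}\,\tau)T[w]}$, and the real Jacobian of $(u,w)\mapsto z$ is $({\rm Im}\,\tau)^n$.

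First I would expand the product of theta series as
\[
\theta_{[\lambda]}\overline{\theta_{[\mu]}} = \sum_{x,y\in\Z^n}\mathbf{e}\bigl({}^t\alpha T\alpha\,\tau - {}^t\beta T\beta\,\bar\tau + 2{}^t\alpha Tz - 2{}^t\beta T\bar z\bigr),
\]
where $\alpha = x+\lambda$, $\beta = y+\mu$. Substituting $z=u+\tau w$, the $u$-dependence of each summand is exactly $e^{4\pi i\,{}^t(\alpha-\beta)Tu}$. Since $T$ is even integral, $2T$ has integer entries, and since $\lambda,\mu\in(2T)^{-1}\Z^n$, the vector $2T(\alpha-\beta)$ always lies in $\Z^n$. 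Hence integration in $u$ over $[0,1]^n$ produces the Kronecker delta $\delta_{2T(\alpha-\beta),0}$, which by the invertibility of $T$ forces $\alpha=\beta$. Because $\lambda,\mu$ are representatives of distinct cosets in $(2T)^{-1}\Z^n/\Z^n$, this is impossible unless $\lambda=\mu$ (and then $x=y$), giving the off-diagonal vanishing.

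When $\lambda=\mu$, the remaining summand, combined with the exponential weight, telescopes using the identity
\[
{}^t\alpha T\alpha(\tau-\bar\tau) + 2(\tau-\bar\tau){}^t\alpha Tw = 2i({\rm Im}\,\tau)\bigl({}^t\alpha T\alpha + 2{}^t\alpha Tw\bigr)
\]
into $e^{-4\pi({\rm Im}\,\tau)T[\alpha+w]}$. Summing over $\alpha\in\lambda+\Z^n$ and integrating $w$ over $[0,1]^n$ is the standard unfolding trick: the variable $\alpha+w$ sweeps out $\R^n$ exactly once, reducing the total expression to the Gaussian integral
\[
({\rm Im}\,\tau)^n\int_{\R^n}e^{-4\pi({\rm Im}\,\tau)T[w']}\,dw' = ({\rm Im}\,\tau)^n\cdot 2^{-n}({\rm Im}\,\tau)^{-n/2}\det(T)^{-1/2},
\]
which simplifies to the claimed $2^{-n}\det(T)^{-1/2}({\rm Im}\,\tau)^{n/2}$.

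No single step is really hard — the whole argument is essentially bookkeeping — but the one place where care is needed is tracking the half-integer hypothesis on $T$: it is precisely this that makes $2T(\alpha-\beta)\in\Z^n$ for all $x,y\in\Z^n$, so that the $u$-integration collapses cleanly to a Kronecker delta. Once that is set up, the Jacobian factor $({\rm Im}\,\tau)^n$ and the Gaussian normalization $(4\pi({\rm Im}\,\tau))^{-n/2}\pi^{n/2}\det(T)^{-1/2}$ combine to yield the stated constant $2^{-n}\det(T)^{-1/2}({\rm Im}\,\tau)^{n/2}$.
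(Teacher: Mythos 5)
Your proof is correct and follows essentially the same route as the paper: write $z=u+\tau w$, integrate out $u$ to obtain a Kronecker delta forcing $x+\lambda=y+\mu$, then unfold the remaining sum over $\Z^n$ against the $w$-integral into a Gaussian over $\R^n$ and diagonalize $T$. Your bookkeeping of why $2T(\alpha-\beta)\in\Z^n$ (so the $u$-integral is a genuine character sum) is in fact stated more cleanly than the corresponding remark in the paper.
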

\begin{proof} Put $z=a+\tau b,\ a,b\in \R^n$. Then we have 
\begin{eqnarray*}
&&\ds\int_{(\C/\Z+\tau\Z)^n}
 \theta_{[\lambda]}(T;\tau,z)\overline{\theta_{[\mu]}(T;\tau,z)}e^{-4\pi({\rm Im} \tau)^{-1}
T[{\rm Im}z]}dz=\\
&&({\rm Im}\tau)^n\ds\int_{(\R/\Z)^n} \sum_{x,y\in\Z^n}\left\{\ds\int_{(\R/\Z)^n}
\bold{e}(2{}^t(x+\lambda)Ta-2{}^t(y+\mu)Ta)da\right\}\cdot \\
&&\bold{e}(2\sqrt{-1}({}^t(x+\lambda)Tb+{}^t(y+\mu)Tb))\bold{e}({}^t(x+\lambda)T(x+\lambda)\tau-{}^t(y+\mu)T(y+\mu)\overline{\tau}) e^{-4\pi({\rm Im} \tau)^{-1}T[{\rm Im}z]}\,db,
\end{eqnarray*}
where $T[{\rm Im}z]={}^t({\rm Im}z)T({\rm Im}z)$. 
Note that for given $x,y\in\Z^n$, $2 {}^t(x-y+\lambda-\mu)T\in \Z^n$ if and only if $\lambda=\mu$ by the definition.  
Therefore
$$\ds\int_{(\R/\Z)^n}
\bold{e}(2{}^t(x+\lambda)Ta-2{}^t(y+\mu)Ta)\, da
=\left\{
\begin{array}{ll}
1 & {\rm if}\ x=y\ {\rm and}\ \lambda=\mu \\
0& {\rm otherwise}.
\end{array}\right.
$$ 
If $\lambda=\mu$, we have 
\begin{eqnarray*}
&&\ds\int_{(\C/\Z+\tau\Z)^n}
 \theta_{[\lambda]}(T;\tau,z)\overline{\theta_{[\lambda]}(T;\tau,z)}e^{-4\pi({\rm Im} \tau)^{-1}
T[{\rm Im}z]}\, dz=\\
&&({\rm Im}\tau)^n\ds\int_{(\R/\Z)^n} \sum_{x\in\Z^n}e^{-4\pi ({\rm Im}\tau){}^t(b+x+\lambda)T(b+x+\lambda)}\, db
=({\rm Im}\tau)^n\ds\int_{\R^n} e^{-4\pi({\rm Im}\tau){}^tbTb}\, db.
\end{eqnarray*}
Since $T$ is diagonalizable by an orthogonal matrix over $\R$, we may assume that $T=\diag(t_1,\ldots,t_n),\ t_i\in \R_{>0}$. 
Hence
$$\ds\int_{\R^n} e^{-4\pi({\rm Im}\tau){}^tbTb}\, db=\prod_{i=1}^n\int_{\R}e^{-4\pi({\rm Im}\tau) t_i t^2}\, dt=
\prod_{i=1}^n\frac{1}{\sqrt{4({\rm Im}\tau) t_i}}=2^{-n}\det(T)^{-\frac{1}{2}}({\rm Im}\tau)^{-\frac{n}{2}}.
$$
Hence we have the claim. 
\end{proof}


\begin{thebibliography}{99}

\bibitem{B} W.L. Baily Jr., {\em An exceptional arithmetic group and its Eisenstein series}, Ann. of Math. {\bf 91} (1970), 512-549.
\bibitem{BB} W.L. Baily Jr. and A. Borel, {\em Compactification of arithmetic quotients of bounded symmetric domain}, Ann. of Math. {\bf 84} (1966), 442-528.
\bibitem{borel&jacquet} A. Borel and H. Jacquet, {\em Automorphic forms and automorphic representations}, Proc. Sympos. Pure Math., XXXIII,  1977, Part 1, 189-207.
\bibitem{Bour} N. Bourbaki, Groupes et Alg\`ebres de Lie, Chapitres 4,5 et 6, Paris, 1981.
\bibitem{casselman} W. Casselman, Introduction to Admissible Representations of $p$-adic Groups, available at 
http://www.math.ubc.ca/~cass/  
\bibitem{CM} J. Cogdell and P. Michel, {\em On the complex moments of symmetric power L-functions at $s = 1$}, Int. Math. Res. Not. 2004, no. 31, 1561-1617.
\bibitem{cog&ps} J. Cogdell and I. Piatetski-Shapiro, {\em Base change for the Saito-Kurokawa representations of $PGSp(4)$}, J. Number Th. {\bf 30} (1988), 298-320.
\bibitem{Coxeter} H.S.M. Coxeter, {\em Integral Cayley numbers}, Duke Math. J.  {\bf 13} (1946). 561-578. 
\bibitem{Ca} R.W. Carter, Simple groups of Lie type, Pure and Applied Mathematics, Vol. 28. John Wiley \& Sons, 1972.
\bibitem{F} H. Freudenthal, {\em Zur ebenen Oktavengeometrie}, Nederl. Akad. Wetensch. 
Proc. Ser. A. 56=Indagationes Math. {\bf 15} (1953), 195-200.
\bibitem{Ik3} T. Ikeda , {\em On the theory of Jacobi forms and Fourier-Jacobi coefficients of Eisenstein series}, J. Math. Kyoto Univ. 
{\bf 34} (1994), 615--636.
\bibitem{Ik1} \bysame, {\em On the lifting of elliptic cusp forms to Siegel cusp forms of degree $2n$}, Ann. of Math. {\bf 154} (2001), 641--681.
\bibitem{Ik2} \bysame, {\em On the lifting of Hermitian modular forms}, Comp. Math. {\bf 144} (2008), 1107-1154. 
\bibitem{Ik4} \bysame, {\em On the lifting of automorphic representation of $PGL_2(\A)$ to $Sp_{2n}(\A)$ of $\widetilde{Sp_{2n+1}(\A)}$ over a totally real algebraic field}, preliminary version.
\bibitem{Ik&H} T. Ikeda and K. Hiraga, {\em On the Kohnen plus space for Hilbert modular forms of half-integral weight I}, Comp. Math.  {\bf 149} (2013),  no. 12, 1963-2010.
\bibitem{Ka} M. Karel, {\em Fourier coefficients of certain Eisenstein series}, Ann. of Math. {\bf 99} (1974), 176-202. 
\bibitem{Ka1} \bysame, On certain Eisenstein series and their Fourier coefficients, Ph.D. thesis, 1972.
\bibitem{kim} H. Kim, {\em Exceptional modular form of weight 4 on an exceptional domain contained in $\Bbb C^{27}$}, Rev. Mat. Iberoamericana {\bf 9} (1993), 139-200.
\bibitem{kim1} \bysame, {\em On local $L$-functions and normalized intertwining operators}, Can. J. Math. {\bf 57} (2005), 535--597. 
\bibitem{knapp} A.W. Knapp, Representation Theory of Semisimple Groups, Princeton University Press, 1986.
\bibitem{Krieg} A. Krieg, {\em Jacobi forms of several variables and the Maass space}, J. Number Theory {\bf 56} (1996), 242-255.
\bibitem{kudla-split} S. Kudla, {\em Splitting metaplectic covers of dual reductive pairs}, Israel J. Math. {\bf 87} (1994), 361--401.
\bibitem{Ku} \bysame, {\em Some extensions of the Siegel-Weil formula}, Eisenstein series and applications, 205-237, Progr. Math., 258, Birkh\"auser, Boston, MA, 2008.
\bibitem{kuro} N. Kurokwa, {\em Examples of eigenvalues of Hecke operators on Siegel cusp forms of degree two}, Inv. Math. {\bf 49} (1978), 149--165.
\bibitem{langlands}R. Langlands, On the functional equations satisfied by Eisenstein series. Lecture Notes in Mathematics, Vol. 544. Springer-Verlag, Berlin-New York, 1976. v+337 pp.
\bibitem{steinberg} R. Steinberg, Lectures on Chevalley groups. Notes prepared by John Faulkner and Robert Wilson. Yale University, New Haven, Conn., 1968.
\bibitem{ps}I. I. Piatetski-Shapiro, {\em On the Saito-Kurokawa lifting}, Invent. Math. {\bf 71} (1983), 309--338. 
\bibitem{takase} K. Takase, {\em On two-fold covering group of $Sp(n,\R)$ and automorphic factor of weight 1/2},
Comment Math. Univ. St. Paul. {\bf 45} (1996), 117--145. 
\bibitem{Yamana} S. Yamana, {\em On the lifting of elliptic cusp forms to cusp forms on quaternionic unitary groups},  
J. Number Th. {\bf 130} (2010), 2480--2527.
\bibitem{W} M. Weissman, {\em The Fourier-Jacobi map and small representations}, Representation Theory, {\bf 7} (2003), 275--299.
\end{thebibliography}
\end{document}